\newtheorem{theorem}{Theorem}[section]
\newtheorem{hypothesis}[theorem]{Hypothesis}
\newtheorem{proposition}[theorem]{Proposition}
\newtheorem{corollary}[theorem]{Corollary}
\newtheorem{sub-lemma}[theorem]{Sub-Lemma}
\newtheorem{remark}[theorem]{Remark}
\def\Z{\mathcal{Z}}
\def\D{\Delta}
\def\RR{\mathbb{R}}
\def\ZZ{\mathbb{Z}}
\def\CC{\mathbb{C}}
\let\eps=\varepsilon
\def\glu{\!\!\!\!\!\!\!}
\def\gluu{\glu\glu\glu}
\def\glu{\!\!\!}
\def\B{\mathcal{B}}
\def\D{\mathcal{D}}
\def\Z{\mathcal{Z}}
\def\RR{{\mathbb R}}
\def\1{{{\mathit 1} \!\!\>\!\! I} }
\DeclareMathOperator{\esssup}{esssup}
\begin{document}

\title[Decorrelation in infinite measure]{Mixing and decorrelation in infinite measure: the case of the periodic Sinai billiard}
\author{Fran\c{c}oise P\`ene}
\address{1) Universit\'e de Brest, Laboratoire de
Math\'ematiques de Bretagne Atlantique, CNRS UMR 6205, France\\
2) Institut Universitaire de France\\
3) Universit\'e de Bretagne Loire}
\email{francoise.pene@univ-brest.fr}
%\urladdr{}
\keywords{Sinai, billiard, Lorentz process,
Young tower, local limit theorem, decorrelation, mixing, infinite measure}
\subjclass[2000]{Primary: 37A25}
\begin{abstract}
We investigate the question of the rate of mixing for observables of a $\mathbb Z^d$-extension of a probability preserving 
dynamical system with good spectral properties.
We state general mixing results, including expansions of every order. The main part of this article is devoted to the study of mixing rate for
smooth observables of
the $\mathbb Z^2$-periodic Sinai billiard, with different kinds of results
depending on whether the horizon is finite or infinite. We establish a first order mixing
result when the horizon is infinite. In the finite horizon case,
we establish an asymptotic expansion of every order, enabling the study of the mixing rate even for observables with null integrals.
\end{abstract}
\date{\today}
\maketitle
\bibliographystyle{plain}
\section*{Introduction}
Let $(M,\nu,T)$ be a dynamical system, that is a measure space $(M,\nu)$ endowed with a measurable transformation $T:M\rightarrow M$ which preserves the measure $\nu$. The mixing properties deal with the asymptotic behaviour, as $n$ goes to infinity, of integrals of the following form
$$C_n(f,g):=\int_Mf.g\circ T^n\, d\nu,$$
for suitable
observables $f,g:M\rightarrow \mathbb C$.

Mixing properties of probability preserving dynamical systems have been studied by many authors.
It is a way to measure how chaotic the dynamical
system is. A probability preserving dynamical system is said to be mixing if  $C_n(f,g)$
converges to $\int_Mf\, d\nu\, \int_Mg\, d\nu$ for every square integrable observables $f,g$. When a probability preserving system is mixing, a natural question is to study the decorrelation rate, i.e.  
the rate at which 
$C_n(f,g)$
converges to zero when $f$ or $g$ have null expectation. This crucial question is often a first step before proving probabilistic limit theorems (such as central limit theorem and its variants). The study of this question has a long history. Such decays of covariance have been studied for wide classes of smooth observables $f,g$ and for many probability preserving dynamical systems. In the case of the Sinai billiard, such results and further properties have been established in \cite{Sinai70,BS80,BS81,BCS90,BCS91,young,Chernov,SV1,SV2}.

We are interested here in the study of mixing properties when the
invariant measure $\nu$ is $\sigma$-finite.
In this context, as noticed in \cite{KS}, there is no satisfactory notion of mixing. Nevertheless the question of the rate of mixing for smooth observables is natural. A first step in this direction is to establish results of the following form:
\begin{equation}\label{MIXING}
\lim_{n\rightarrow +\infty}{\alpha_n}C_n(f,g)=\int_Mf\, d\nu \, \int_Mg\, d\nu\, .
\end{equation}
Such results have been proved in \cite{Thaler,MT,Gouezel,BT,LT}
for a wide class of models and for smooth functions $f,g$, using  induction on a finite measure subset of $M$.

An alternative approach, specific to the case of $\mathbb Z^d$-extensions of probability preserving dynamical system,
has been pointed out in \cite{FP17}. The idea therein is that,
in this particular context, \eqref{MIXING}
is related to a precised local limit theorem.
In the particular case of the $\mathbb Z^2$-periodic Sinai billiard with finite horizon, it
has been proved in \cite{FP17} that
$$C_n(f,g)=\frac {c_0} n\int_M f\, d\nu\, \int_M g\, d\nu+o(n^{-1})\, , $$
for some explicit constant $c_0$, for some dynamically Lipschitz functions, including functions with full support in $M$.

This paper is motivated by the question of high order expansion
of mixing and by the study of the mixing rate for observables
with null integrals. This last question can be seen as decorrelation rate
in the infinite measure.
Let us mention the fact that it has been proved  in \cite{damiensoaz}, for the billiard in finite horizon, that sums
$\sum_{k\in\mathbb Z}\int_Mf.f\circ T^k\, d\nu$ are well defined for
some observables $f$ with null expectation.
In the present paper, 
we use the approach of \cite{FP17} to establish, in the context of the $\mathbb Z^2$-periodic Sinai billiard with finite horizon, a high order mixing result of the following form:
\begin{equation}\label{DEVASYMP}
C_n(f,g)=\sum_{m=0}^{K-1}\frac{c_m(f,g)}{n^{1+m}}+o(n^{-K}) \, .
\end{equation}
This estimate enables the study of the rate of convergence of 
$nC_n(f,g)$ to $\int_Mf\, d\nu\, \int_Mg\, d\nu$ and, most importantly,
it enables the study of the rate
of decay of $C_n(f,g)$ for functions $f$ or $g$ with integral 0.
In general, if $f$ or $g$ have zero integral we have
$$C_n(f,g)\sim\frac{c_1(f,g)}{n^{2}}\, ,$$
but it may happen that
$$C_n(f,g)\sim\frac{c_2(f,g)}{n^{3}} \, ,$$
and even that $C_n(f,g)=o(n^{-3})$.
For example, \eqref{DEVASYMP} gives immediately that, if $\int_Mf\, d\nu\int_Mg\, d\nu\ne 0$, then
\begin{eqnarray}
C_n(f-f\circ T,g)&=&C_n(f,g)-C_{n-1}(f,g)\nonumber\\
&\sim&-c_0\frac{\int_Mf\, d\nu.\int_Mg\, d\nu}{n^2}=\frac{c_1(f-f\circ T,g)}{n^2}\label{Cncobg}
\end{eqnarray}
and
\begin{eqnarray*}
C_n(2f-f\circ T-f\circ T^{-1},g)&=&C_n(f-f\circ T,g-g\circ T)\\
  &=&2C_n(f,g)-C_{n-1}(f,g)-C_{n+1}(f,g)\\
  &\sim& -\frac {2c_0}{n^3}\int_Mf\, d\nu
  \int_Mg\, d\nu=\frac{c_2(f-f\circ T,g-g\circ T)}{n^3}\, .
\end{eqnarray*}
General formulas for the dominating term will be given in Theorem \ref{MAIN}, Remark \ref{RQE} and Corollary \ref{coroMAIN}. In particular $c_1(f,g)$ and $c_2(f,g)$ will be precised.

We point out the fact that the method we use is rather general in the context of $\mathbb Z^d$-extensions over dynamical systems with good spectral properties, and that, to our knowledge, these are the first results of this kind for dynamical systems preserving an infinite measure.\medskip

We establish moreover an estimate of the following form
for 
smooth observables of the $\mathbb Z^2$-periodic Sinai billiard with infinite horizon: 
$$C_n(f,g)=\frac {c_0} {n\log n}\int_M f\, d\nu\, \int_M g\, d\nu+o((n\log n)^{-1})\, . $$

The paper is organized as follows. In Section \ref{sec:model}, we 
present the model of the $\mathbb Z^2$-periodic Sinai billiard
and we state our main results for this model (finite/infinite horizon).
In Section \ref{sec:GENE}, we state general mixing results for $\mathbb Z^d$-extensions of probability preserving dynamical systems for which the Nagaev-Guivarc'h perturbation method can be implemented.
In Section \ref{sec:young}, we recall some facts on the towers constructed by Young for the Sinai billiards.
In Section \ref{sec:MAIN}, we prove our main results for the billiard
in finite horizon (see also Appendix \ref{sec:coeff} for the computation of the first coefficients). In Section \ref{sec:infinite},
we prove our result for the billiard in infinite horizon.

\section{Main results for $\mathbb Z^2$-periodic Sinai billiards}\label{sec:model}
Let us introduce the $\mathbb Z^2$-periodic Sinai billiard
$(M,\nu,T)$.

Billiards systems modelise the behaviour of a point particle
moving at unit speed in a domain $Q$ and bouncing off $\partial Q$ with respect to the Descartes reflection law (incident angle=reflected angle). 
We assume here that $Q:=\mathbb R^2\setminus\bigcup_{\ell\in\mathbb Z^2}\bigcup_{i=1}^I (O_i+\ell)$, with $I\ge 2$ and where $O_1,...,O_I$ are convex bounded open sets (the boundaries of which are $C^3$-smooth and have non null curvature).
We assume that the closures of the obstacles $O_i+\ell$ are pairwise
disjoint. 
The billiard is said to have {\bf finite horizon} if
every line in $\mathbb R^2$ meets $\partial Q$. Otherwise it is said to have {\bf infinite horizon}.

We consider the dynamical system $(M,\nu,T)$ corresponding
to the dynamics at reflection times which is defined as follows.
Let $M$ be the set of reflected vectors off $\partial Q$, i.e. 
$$M:=\{(q,\vec v)\in\partial Q\times S^1\ :\ \langle \vec n(q),\vec v\rangle\ge 0\},$$
where $\vec n(q)$ stands for the unit normal vector to $\partial Q$
at $q$ directed inward $Q$. We decompose this set into $M:=\bigcup_{\ell\in\mathbb Z^2}\mathcal C_\ell$, with
$$\mathcal C_\ell:=\left\{(q,\vec v)\in M\ :\ q\in\bigcup_{i=1}^I (\partial O_i+\ell)\right\}.$$
The set $\mathcal C_\ell$ is called the $\ell$-cell.
We define $T:M\rightarrow M$ as the transformation mapping a reflected vector at a reflection time to the reflected vector at the next reflection time. 
We consider the measure $\nu$ absolutely continuous with respect to the Lebesgue measure on $M$, with density
proportional to $(q,\vec v)\mapsto \langle \vec n(q),\vec v\rangle$ and such that $\nu(\mathcal C_{0})=1$.

Because of the $\mathbb Z^2$-periodicity of the model, there
exists a transformation $\bar T:\mathcal C_{ 0}\rightarrow\mathcal C_{ 0}$ and a function
$\kappa:\mathcal C_{ 0}\rightarrow\mathbb Z^2$ such that
\begin{equation}\label{skewproduct}
\forall ((q,\vec v),\ell)\in\mathcal C_{ 0}\times\mathbb Z^2,\ 
    T(q+\ell, \vec v)=\left(q'+\ell+\kappa(q,\vec v),\vec v'\right),\ 
\mbox{if}\ \bar T (q,\vec v)=(q',\vec v').
\end{equation}
This allows us to define a probability preserving dynamical $(\bar M,\bar\mu,\bar T)$ (the Sinai billiard)
by setting $\bar M:=\mathcal C_{ 0}$ and $\bar\mu=\nu_{|\mathcal C_{ 0}}$.
Note that \eqref{skewproduct} means that $(M,\nu,T)$ can be represented by the $\mathbb Z^2$-extension of $(\bar M,\bar\mu,\bar T)$ by $\kappa$. In particular, iterating \eqref{skewproduct}
leads to
\begin{equation}\label{skewproductn}
\forall ((q,\vec v),\ell)\in\mathcal C_{0}\times\mathbb Z^2,\ 
    T^n(q+\ell, \vec v)=\left(q'_n+\ell+S_n(q,\vec v),\vec v'_n\right),
\end{equation}
if $\bar T^n (q,\vec v)=(q'_n,\vec v'_n)$ and with the notation
$$ S_n:=\sum_{k=0}^{n-1}\kappa\circ \bar T^k.$$
The set of tangent reflected vectors $\mathcal S_0$ given by
$$\mathcal S_0:=\{(q,\vec v)\in M\ :\ \langle \vec v,\vec n(q)\rangle=0\} $$
plays a special role in the study of $T$.
 Note that $T$ defines a $C^1$-diffeomorphism from $M\setminus
(\mathcal S_0\cup T^{-1}(\mathcal S_0))$ to $M\setminus
(\mathcal S_0\cup T(\mathcal S_0))$.

Statistical properties of $(\bar M,\bar\mu,\bar T)$ have
been studied by many authors since the seminal article \cite{Sinai70} by Sinai. 

In the finite horizon case, limit theorems have been established
in \cite{BS81,BCS91,young,Chernov}, including the convergence
in distribution of $(S_n/\sqrt{n})_n$ to a centered gaussian random variable $B$
with nondegenerate variance matrix $\Sigma^2$ given by:
$$\Sigma^2:=\sum_{k\in\mathbb Z}\mathbb E_{\bar\mu}[\kappa\otimes\kappa\circ \bar T^k]\, ,$$
where we used the notation $X\otimes Y$ for the matrix $(x_iy_j)_{i,j}$, for $X=(x_i)_i,Y=(y_j)_j\in\mathbb C^2$.
Moreover a local limit theorem for $S_n$ has been established in \cite{SV1} and some of its refinements have been stated and used in \cite{DSV,FP09a,FP09b,ps10} with various applications. Recurrence and ergodicity of this model follow from \cite{JPC,Schmidt,SV1,Simanyi,FP00}.

In the infinite horizon case, a result of exponential decay of correlation has been proved in \cite{Chernov}.
A nonstandard central limit theorem (with normalization in $\sqrt{n\log n}$) and a local limit theorem have been established in \cite{SV2}, ensuring recurrence and ergodicity of the infinite measure system $(M,\nu,T)$. 
This result states in particular that $(S_n/\sqrt{n\log n})_n$
converges in distribution to a centered gaussian distribution with
variance $\Sigma_\infty^2$ given by
$$\Sigma_\infty^2:=\sum_{x\in \mathcal S_0|\bar T x=x}\frac{d_x^2}{2|\kappa(x)|\, \sum_{i=1}^I|\partial O_i|}(\kappa(x))^{\otimes 2}\, ,$$
where $d_x$ is the width of the corridor corresponding to $x$.

Our main results provide mixing estimates for dynamically Lipschitz
functions. Let us introduce this class of observables.
Let $\xi\in(0,1)$. We consider the metric $d_\xi$ on $M$ given by
$$\forall x,y\in M,\quad d_\xi(x,y):=\xi^{s(x,y)},$$
where $s$ is a separation time defined as follows:
$s(x,y)$ is the maximum of the integers $k\ge 0$ such that $x$ and $y$
lie in the same connected component of $M\setminus \bigcup_{j=-k}^kT^{-j}\mathcal S_0$. For every $f:M\rightarrow \mathbb C$, we write $L_\xi(f)$ for the Lipschitz constant with respect to $d_\xi$:
$$L_\xi(f):=\sup_{x\ne y}\frac{|f(x)-f(y)|}{d_\xi(x,y)}\, . $$
We then set
$$\Vert f\Vert_{(\xi)}:=\Vert f\Vert_\infty+L_\xi(f)\, . $$
Before stating our main result, let us introduce some additional notations.

We will work with symmetric multilinear forms.
For any
$A=(A_{i_1,...,i_m})_{(i_1,...,i_m)\in\{1,2\}^m}$ and
$B=(B_{i_1,...,i_k})_{(i_1,...,i_k)\in\{1,2\}^k}$ 
with complex entries ($A$ and $B$ are identified respectively with a $m$-multilinear
form on $\mathbb C^2$ and with a $k$-multilinear form on $\mathbb C^2$), we define $A\otimes B$ as the element
$C$ of $\mathbb C^{\{1,2\}^{m + m'}}$ (identified with
a $(m+ m')$-multilinear form on $\mathbb C^2$)
such that
$$\forall i_1,,...,i_{m+m'}\in\{1,2\},\quad C_{(i_1,,...,i_{m+m'})}=A_{(i_1,...,i_{m})}
B_{(i_{m+1}...,i_{m+m'})} .$$
For any
$A=(A_{i_1,...,i_m})_{(i_1,...,i_m)\in\{1,2\}^m}$ and
$B=(B_{i_1,...,i_k})_{(i_1,...,i_k)\in\{1,2\}^k}$ symmetric
with complex entries with $k\le m$, we define $A* B$ as the element
$C$ of $\mathbb C^{\{1,2\}^{m-k}}$ (identified with
a $(m-k)$-multilinear form on $\mathbb C^2$) such that
$$\forall i_1,,...,i_{m-k}\in\{1,2\},\quad C_{(i_1,,...,i_{m-k})}=\sum_{i_{m-k+1},...,i_m\in\{1,2\}}A_{(i_1,...,i_{m})}
B_{(i_{m-k+1},...,i_{m})} .$$
We identify naturally vectors in $\mathbb C^2$ with $1$-linear functions and symmetric matrices with symmetric bilinear functions.
For any $C^m$-smooth function $F:\mathbb C^2\rightarrow\mathbb C$, we write $F^{(m)}$ for its $m$-th
differential, which is identified with a $m$-linear function on $\mathbb C^2$. 
We write $A^{\otimes k}$ for the product $A\otimes...\otimes A$.
Observe that, with these notations, Taylor expansions of $F$ at $0$ are simply written
$$\sum_{k=0}^m F^{(k)}(0)*x^{\otimes k}\,  .$$
It is also worth noting that $A* (B\otimes C)=(A*B)*C$, for every $A,B,C$ corresponding to symmetric multilinear forms with respective ranks $m,k,\ell$ with $m\ge k+\ell$.

We extend the definition of $\kappa$ to $M$ by setting $\kappa((q+\ell,\vec v))=\kappa(q,\vec v)$ for every $(q,\vec v)\in\bar M$ and every $\ell\in\mathbb Z^2$.
For every $k\in\mathbb Z$ and every $x\in M$, we write $\mathcal I_k(x)$ for the label in $\mathbb Z^2$ of the cell containing $T^kx$, i.e. $\mathcal I_k$ is the label of the cell in which the particle is at the $k$-th
reflection time. It is worth noting that, for $n\ge 0$, we have $\mathcal I_n-\mathcal I_0=\sum_{k=0}^{n-1}\kappa\circ T^k$ and
$\mathcal I_{-n}-\mathcal I_0=-\sum_{k=-n}^{-1}\kappa\circ T^k$.
\medskip

Now let us state our main results, the proofs of which are postponed to
Section \ref{sec:MAIN}.
We start by stating our result in the infinite horizon case, and then we will present sharper results in the finite horizon case.

\subsection{$\mathbb Z^2$-periodic Sinai billiard with infinite horizon}
\begin{theorem}
\label{horizoninfini}
Let $(M,\nu, T)$ be the $\mathbb Z^2$-periodic Sinai billiard with infinite horizon.
Suppose that the set of corridor free flights $\{\kappa(x),\ x\in \mathcal S_0,\ \bar T x=x\}$ spans $\mathbb R^2$.
Let $f,g:M\rightarrow\mathbb C$ (with respect to $d_\xi$) 
be two dynamically Lipschitz continuous functions such that
\begin{equation}\label{toto}
\sum_{\ell\in\mathbb Z^2}\left(\Vert f
      \mathbf 1_{\mathcal C_\ell}\Vert_{\infty}+\Vert g
      \mathbf 1_{\mathcal C_\ell}\Vert_{\infty}\right)<\infty\, .
\end{equation}
Then
$$\int_Mf.g\circ T^n\, d\nu=\frac{1}{2\pi\sqrt{\det\Sigma_\infty^2}\, n\log n}\left(\int_Mf\, d\nu\, \int_Mg\, d\nu+o(1)\right)\, .$$
\end{theorem}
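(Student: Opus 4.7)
The plan is to reduce the infinite-measure asymptotics to a uniform, functional version of the Sz\'asz--Varj\'u nonstandard local limit theorem \cite{SV2} for the $\mathbb Z^2$-valued cocycle $S_n$ over $(\bar M, \bar\mu, \bar T)$, and then to sum over lattice translates using the absolute summability hypothesis \eqref{toto}. Identifying each cell $\mathcal C_\ell$ with $\bar M$ by translation, I define $f_\ell, g_\ell: \bar M \to \mathbb C$ by $f_\ell(\bar x) := f(\bar x + \ell)$ and $g_\ell(\bar x) := g(\bar x + \ell)$. Using \eqref{skewproductn} to keep track of the cell index, the integral of interest decomposes as
$$C_n(f,g) = \sum_{\ell_0,\ell \in \mathbb Z^2} \int_{\bar M} f_{\ell_0}(\bar x) \, g_{\ell_0+\ell}(\bar T^n \bar x) \, \mathbf 1_{\{S_n(\bar x) = \ell\}} \, d\bar\mu(\bar x).$$

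The key analytic input I would establish (or extract from the general machinery of Section \ref{sec:GENE}) is the following functional LLT: for all dynamically Lipschitz $F, G: \bar M \to \mathbb C$ and every $\ell \in \mathbb Z^2$,
$$n \log n \int_{\bar M} F \cdot (G \circ \bar T^n) \, \mathbf 1_{\{S_n = \ell\}} \, d\bar\mu \xrightarrow[n\to\infty]{} \frac{1}{2\pi\sqrt{\det\Sigma_\infty^2}} \int_{\bar M} F\, d\bar\mu \int_{\bar M} G\, d\bar\mu,$$
together with a uniform bound
$$\Bigl| n \log n \int_{\bar M} F \cdot (G \circ \bar T^n) \, \mathbf 1_{\{S_n = \ell\}} \, d\bar\mu \Bigr| \leq C \, \|F\|_{(\xi)} \|G\|_{(\xi)},$$
valid for all $\ell \in \mathbb Z^2$ and all sufficiently large $n$. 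Such an estimate is obtained by Nagaev--Guivarc'h perturbation on the Young tower of Section \ref{sec:young}: the transfer operator of $\bar T$ twisted by $e^{i\langle t, \kappa\rangle}$ has, near $t = 0$, a dominant eigenvalue of the form $\lambda(t) = 1 - \tfrac12 \langle t, \Sigma_\infty^2 t\rangle \log(1/|t|) + o(|t|^2 \log|t|)$, the logarithmic correction coming from the unbounded free-flight function in the corridors; Fourier inversion over $[-\pi,\pi]^2$ then yields both the claimed rate $1/(n\log n)$ and the uniform upper bound. The spanning hypothesis on the corridor vectors is precisely what ensures non-degeneracy of $\Sigma_\infty^2$, so that the Gaussian factor at scale $\sqrt{n\log n}$ is genuinely two-dimensional.

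Applying this pointwise convergence term by term in the cell decomposition and using the uniform bound together with the summability granted by \eqref{toto}, namely
$$\sum_{\ell_0,\ell} \|f\, \mathbf 1_{\mathcal C_{\ell_0}}\|_\infty \|g\, \mathbf 1_{\mathcal C_{\ell_0 + \ell}}\|_\infty = \Bigl(\sum_{\ell_0} \|f\, \mathbf 1_{\mathcal C_{\ell_0}}\|_\infty\Bigr)\Bigl(\sum_{\ell'} \|g\, \mathbf 1_{\mathcal C_{\ell'}}\|_\infty\Bigr) < \infty,$$
dominated convergence lets me exchange the limit and the double sum, yielding
$$\lim_{n\to\infty} n \log n \cdot C_n(f,g) = \frac{1}{2\pi\sqrt{\det\Sigma_\infty^2}} \sum_{\ell_0,\ell} \int_{\bar M} f_{\ell_0}\, d\bar\mu \int_{\bar M} g_{\ell_0+\ell}\, d\bar\mu,$$
and the double sum factorizes, after the change of variable $\ell' = \ell_0 + \ell$, into $\int_M f\, d\nu \int_M g\, d\nu$. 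The main technical obstacle is the uniformity in $\ell$ of the $o(1/(n\log n))$ error in the LLT at the nonstandard rate: the $|t|^2 \log|t|$ behavior of the leading eigenvalue makes the usual Edgeworth-type expansion break down, and one must carefully control the contribution of frequencies $|t| \gtrsim 1/\sqrt{n\log n}$, while also verifying aperiodicity of $\kappa$ on $[-\pi,\pi]^2\setminus\{0\}$ through the Young-tower spectral picture.
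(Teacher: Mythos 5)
The proposal follows essentially the same route as the paper's proof: decompose $C_n(f,g)$ over cells via \eqref{FORMULECLEF0}, invoke the Sz\'asz--Varj\'u spectral expansion $\lambda_t-1\sim \Sigma_\infty^2*t^{\otimes 2}\,\log|t|$ of the perturbed transfer operator on the Young tower to obtain a local limit theorem at scale $\sqrt{n\log n}$ with an error uniform in $\ell$, and then sum over $\ell,\ell'$ using the summability \eqref{toto} and dominated convergence. The only substantive step you gloss over is the passage from two-sided dynamically Lipschitz observables on $M$ to observables measurable on the quotient tower $\hat M$ --- the paper handles this by replacing $f,g$ with their conditional expectations on $\mathcal Z_{-k_n}^{k_n}$ with $k_n=\lceil \log^2 n\rceil$ and lifting via Proposition \ref{pro:pertu2b}, at the cost of $t$- and $n$-dependent lifted observables $\hat F_{n,t},\hat G_{n,t}$ --- and your stated eigenvalue expansion carries a spurious factor $\tfrac12$ relative to the normalization that actually produces the constant $(2\pi\sqrt{\det\Sigma_\infty^2})^{-1}$ after the change of variables $u=\mathfrak a_n t$.
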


\subsection{$\mathbb Z^2$-periodic Sinai billiard with finite horizon}
We first state our result providing an expansion of every order for the mixing
(see Theorem \ref{MAIN} and Corollary \ref{coroMAIN} for more details).
\begin{theorem}\label{PRINCIPAL}
Let $K$ be a positive integer.
Let $f,g:M\rightarrow\mathbb C$ be two dynamically Lipschitz continuous observables
such that
\begin{equation*}
\sum_{\ell\in\mathbb Z^2}|\ell|^{2K-2}(\Vert f\mathbf 1_{\mathcal C_\ell}\Vert_{(\xi)}+\Vert g\mathbf 1_{\mathcal C_\ell}\Vert_{(\xi)})<\infty\, ,
\end{equation*}
then there exist $c_0(f,g),...,c_{K-1}(f,g)$
such that
\[
\int_Mf.g\circ T^n\, d\nu
=\sum_{m=0}^{K-1}\frac{c_m(f,g)}{n^{1+m}}+o(n^{-K})\, .
\]
\end{theorem}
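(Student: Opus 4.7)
The plan is to deduce Theorem~\ref{PRINCIPAL} from the abstract mixing results for $\ZZ^d$-extensions stated in Section~\ref{sec:GENE}, applied to the skew-product representation \eqref{skewproductn} of the Sinai billiard. Identifying each cell $\mathcal C_\ell$ with $\bar M$ by translation and writing $f_\ell,g_\ell\colon\bar M\to\CC$ for the restrictions of $f,g$, one rewrites
\[
\int_M f\cdot g\circ T^n\, d\nu
=\sum_{\ell,N\in\ZZ^2}\int_{\bar M} f_\ell\cdot (g_{\ell+N}\circ \bar T^n)\cdot \mathbf 1_{\{S_n=N\}}\, d\bar\mu\, .
\]
The main work is to establish a $K$-term expansion for the mixed local quantities $L_n(F,G,N):=\int_{\bar M} F\cdot (G\circ\bar T^n)\cdot \mathbf 1_{\{S_n=N\}}\, d\bar\mu$, uniformly in $N$ with a polynomial control in $|N|$ matching the cell summability hypothesis, and then to sum the expansion over $\ell,N$.

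For the local expansion, I would implement the Nagaev--Guivarc'h perturbation scheme on the Young tower recalled in Section~\ref{sec:young}. Letting $\bar{\mathcal P}$ denote the transfer operator of $\bar T$ on the space of dynamically Lipschitz functions (lifted to the tower), the twisted family $P_t G := \bar{\mathcal P}(e^{i\langle t,\kappa\rangle} G)$ is analytic in $t\in[-\pi,\pi]^2$, and combining the exponential decay of correlations of \cite{young,Chernov} with the aperiodicity of $\kappa$ used in \cite{SV1}, $P_t$ decomposes on a small neighborhood $V$ of $0$ as $P_t=\lambda_t\Pi_t+N_t$ with $\lambda_t=1-\tfrac12\langle\Sigma^2 t,t\rangle+O(|t|^3)$, $\Pi_t$ rank-one, and $\Vert N_t^n\Vert=O(\rho^n)$ uniformly on $V$ for some $\rho<1$, while $\Vert P_t^n\Vert=O(\rho^n)$ uniformly on $[-\pi,\pi]^2\setminus V$. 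Fourier inversion gives
\[
L_n(F,G,N)=\frac{1}{(2\pi)^2}\int_{[-\pi,\pi]^2}e^{-i\langle t,N\rangle}\int_{\bar M}F\cdot P_t^n G\, d\bar\mu\, dt\, ,
\]
and a Taylor expansion of $\lambda_t$ and $\Pi_t$ to order $2K-1$, the rescaling $t=u/\sqrt n$, and standard Gaussian integration yield
\[
L_n(F,G,N)=\sum_{m=0}^{K-1}\frac{a_m(F,G,N)}{n^{m+1}}+R_{n,K}(F,G,N)\, ,
\]
where $a_m(F,G,N)$ is a polynomial of degree $\le 2m$ in $N/\sqrt n$ multiplied by the centered Gaussian density with covariance $\Sigma^2$ evaluated at $N/\sqrt n$.

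The main obstacle is the re-summation over $\ell$ and $N$ while preserving a global remainder of size $o(n^{-K})$. Since $N=(\ell+N)-\ell$, the polynomial factor $|N|^{2m}$ for $m\le K-1$ is dominated by a constant multiple of $|\ell|^{2K-2}+|\ell+N|^{2K-2}$, which is precisely what forces the summability hypothesis $\sum_\ell|\ell|^{2K-2}(\Vert f\mathbf 1_{\mathcal C_\ell}\Vert_{(\xi)}+\Vert g\mathbf 1_{\mathcal C_\ell}\Vert_{(\xi)})<\infty$. The technical heart is thus a uniform remainder estimate of the type
\[
|R_{n,K}(F,G,N)|\le C\Vert F\Vert_{(\xi)}\Vert G\Vert_{(\xi)}\, (1+|N|^{2K-2})\, \varepsilon_n\, n^{-K-1/2}
\]
with $\varepsilon_n\to 0$, obtained by carrying the Taylor expansion of $t\mapsto P_t^n$ one order further and controlling the new remainder via the exponential decay of $N_t$ and of $P_t^n$ off $V$; Gaussian-type summability in $N$ then delivers the claimed $o(n^{-K})$. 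The coefficients $c_m(f,g)$ then appear as cell-wise sums of $a_m(f_\ell,g_{\ell+N},N)$; $c_0$ recovers the mixing constant of \cite{FP17}, while $c_1,c_2$ match the ones announced after \eqref{Cncobg}, with the full description deferred to Theorem~\ref{MAIN}, Remark~\ref{RQE} and Corollary~\ref{coroMAIN}.
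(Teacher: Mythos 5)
Your proposal follows the same route as the paper: the key identity \eqref{FORMULECLEF0}--\eqref{FORMULECLEF}, the Nagaev--Guivarc'h decomposition $\hat P_t^n=\lambda_t^n\Pi_t+N_t^n$ of Proposition \ref{pro:pertu} (from \cite{SV1,FP09a}), Fourier inversion with the rescaling $t=u/\sqrt n$, Taylor expansion of $\lambda_t$ and $\Pi_t$ followed by Gaussian integration, and a final re-summation over cells in which the weight $|\ell|^{2K-2}$ is exactly what permits replacing $\Phi^{(\cdot)}\bigl(\tfrac{\ell'-\ell}{\sqrt n}\bigr)$ by its Taylor polynomial at $0$. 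This is precisely the chain Proposition \ref{TLL} $\to$ Proposition \ref{TLL2} $\to$ Theorem \ref{MAIN} $\to$ Theorem \ref{PRINCIPAL}.

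There is one step where the argument as you wrote it would not go through and where the paper does genuine work. You assume the perturbed operators act well ``on the space of dynamically Lipschitz functions (lifted to the tower)''. But a dynamically Lipschitz observable of the billiard map depends on the two-sided itinerary and does not descend to the quotient tower $\hat M$; only functions constant on stable manifolds do, and the spectral gap of Proposition \ref{pro:pertu} lives on Young's space $\mathcal B$ over $\hat M$, not on any space of Lipschitz observables of the invertible map $\bar T$. The paper resolves this in two stages: first it proves the local expansion for $u,v$ measurable with respect to $\mathcal Z_{-k}^{k}$, resp.\ $\mathcal Z_{-k}^{\infty}$, by composing with $\bar T^k$ and transferring to $\hat M$ via Proposition \ref{pro:pertu2b} (identity \eqref{EqClef}), at the price of a factor $k^{2K-1}$ in the constants; it then approximates general dynamically Lipschitz $u,v$ by $\mathbb E_{\bar\mu}[\,\cdot\,|\mathcal Z_{-k_n}^{k_n}]$ with $k_n=\lceil(\log n)^2\rceil$, chosen so that both the $\xi^{k_n}$ approximation error and the $(\log n)^{4K-2}$ loss remain $o(n^{-K})$, and shows on the way that the coefficients $A_m(u,v)$ converge. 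A second, smaller omission: your expansion a priori produces half-integer powers $n^{-1-(m+j+r)/2}$; to see that only integer powers survive one needs the vanishing of the odd derivatives of $\Phi$ at $0$ together with the evenness of $\lambda$ (Proposition \ref{pro:pertu2}), and the final reduction to Theorem \ref{PRINCIPAL} uses that $(\lambda^n/a^n)_0^{(2j)}$ is a polynomial in $n$ of degree at most $j/2$, so that the $n$-dependent coefficients of \eqref{decorrelation2} reorganize into genuine constants $c_m(f,g)$.
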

We precise in the following theorem the expansion of order 2.
\begin{theorem}\label{MAINbis}
Let $f,g:M\rightarrow\mathbb R$ be two bounded observables
such that
$$\sum_{\ell\in\mathbb Z^2}|\ell|^{2}(\Vert f\mathbf 1_{\mathcal C_\ell}\Vert_{(\xi)}+\Vert g\mathbf 1_{\mathcal C_\ell}\Vert_{(\xi)})<\infty\, .$$
Then
\begin{eqnarray}
\int_M f.g\circ T^n\, d\nu
&=&\frac 1{2\pi\sqrt{\det\Sigma^2}}\left\{\frac{1}{n}\int_M f\, d\nu \, \int_M g\, d\nu
+ \frac {1}{2\, n^2}\, \Sigma^{-2}*\tilde {\mathfrak A}_2(f,g)\nonumber\right.\\
&\ &
\left.+\frac 1{4!\, n^2}\int_Mf\, d\nu \, \int_Mg\, d\nu \,  (\Sigma^{-2})^{\otimes 2}* \Lambda_4\right\}
+o(n^{-2})\, , \label{MAIN2}
\end{eqnarray}
with $\Sigma^{-2}=(\Sigma^2)^{-1}$ and
\[
\tilde {\mathfrak A_2}(f,g):= 
-\int_Mf\, d\nu \, \mathfrak B_2^-(g)-\int_Mg\, d\nu\, \mathfrak B_2^+(f)-\int_Mf\, d\nu\int_Mg\, d\nu\, \mathfrak B_0+2\, \mathfrak B_1^+(f)\otimes\mathfrak B_1^-(g)\, ,
\]
\[
\mathfrak B_2^+(f):=\lim_{m\rightarrow +\infty} \int_Mf.\left(\mathcal I_m^{\otimes 2}-m\Sigma^2\right)\, d\nu \, ,
\]
\[
\mathfrak B_2^-(g):=\lim_{m\rightarrow -\infty}\int_M g.\left(\mathcal I_m^{\otimes 2}-|m|\Sigma^2\right)\, d\nu\, ,
\]
\[
\mathfrak B_1^+(f):=\lim_{m\rightarrow +\infty}\int_M f.\mathcal I_m\, d\nu \, ,
\quad
\mathfrak B_1^-(g):=\lim_{m\rightarrow -\infty}\int_M g.\mathcal I_m\, d\nu\, ,
\]
\[
\mathfrak B_0=\lim_{m\rightarrow +\infty}(m\Sigma^2-\mathbb E_{\bar\mu}[S_m^{\otimes 2}])
\]
and
\[
\Lambda_4:=\lim_{n\rightarrow +\infty}\frac{\mathbb E_{\bar\mu}[S_n^{\otimes 4}]-3n^2(\Sigma^2)^{\otimes 2}}{n}+6\Sigma^2\otimes \mathfrak B_0\, .
\]
\end{theorem}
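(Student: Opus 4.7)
The strategy is to push the Fourier analysis / Nagaev--Guivarc'h approach of \cite{FP17} to second order, so as to identify the two leading coefficients $c_0, c_1$ promised by Theorem \ref{PRINCIPAL} with $K=2$. The starting point is the cell decomposition $f=\sum_\ell f\mathbf 1_{\mathcal C_\ell}$, $g=\sum_m g\mathbf 1_{\mathcal C_m}$: translating each cell back to $\bar M$ and using \eqref{skewproductn} together with Fourier inversion on $\mathbb Z^2$ to represent the indicator $\mathbf 1_{\{S_n=m-\ell\}}$, one arrives at
\begin{equation*}
C_n(f,g)=\frac 1{(2\pi)^2}\int_{[-\pi,\pi]^2}\int_{\bar M} F_t(x)\, e^{i\langle t,S_n(x)\rangle}\, G_t(\bar T^n x)\, d\bar\mu(x)\, dt,
\end{equation*}
with $F_t(x):=\sum_\ell \tilde f_\ell(x)\, e^{i\langle t,\ell\rangle}$ and $G_t(x):=\sum_m \tilde g_m(x)\, e^{-i\langle t,m\rangle}$, where $\tilde f_\ell,\tilde g_m$ denote the translations of $f\mathbf 1_{\mathcal C_\ell}, g\mathbf 1_{\mathcal C_m}$ to $\bar M$. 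The inner integral then rewrites as $\langle G_t, P_t^n F_t\rangle_{\bar\mu}$, where $P_t$ is the twisted transfer operator of $\bar T$ acting on the Young-tower space recalled in Section \ref{sec:young}.

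The core is a Laplace-type expansion of this oscillatory integral. Away from $0$, aperiodicity of $\kappa$ forces $\|P_t^n\|\le C\rho^n$ with $\rho<1$, so the contribution of $|t|>\delta$ is exponentially small. For $|t|\le\delta$ the Nagaev--Guivarc'h spectral decomposition $P_t^n=\lambda_t^n\Pi_t+R_t^n$ applies, with $R_t^n$ again exponentially small uniformly. Substituting $t=u/\sqrt n$ and using the time-reversal symmetry of the billiard, which kills all odd Taylor coefficients of $\log\lambda_t$ at $0$, yields
\begin{equation*}
n\log\lambda_{u/\sqrt n}=-\tfrac12\,\Sigma^2*u^{\otimes 2}+\frac{1}{4!\,n}\,\Lambda_4*u^{\otimes 4}+O(n^{-2}),
\end{equation*}
the $u^{\otimes 4}/n$ coefficient being identified with $\Lambda_4$ via the cumulant expansion of $\mathbb E_{\bar\mu}[e^{i\langle t,S_n\rangle}]$. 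The moment assumption $\sum_\ell|\ell|^{2}(\|f\mathbf 1_{\mathcal C_\ell}\|_{(\xi)}+\|g\mathbf 1_{\mathcal C_\ell}\|_{(\xi)})<\infty$ guarantees that $t\mapsto F_t, G_t$ are $C^2$ near $0$, with first and second derivatives encoding the corresponding cell moments of $f$ and $g$. Expanding $\Pi_t, F_t, G_t$ to order $2$ and $\lambda_t^n$ to the required order, multiplying out, and integrating term by term against the Gaussian density $e^{-\tfrac12\,\Sigma^2*u^{\otimes 2}}$ (all odd monomials in $u$ vanish by symmetry), leaves only finitely many nonzero contributions at order $n^{-2}$. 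The leading term produces $c_0=(2\pi\sqrt{\det\Sigma^2})^{-1}\int f\, d\nu\int g\, d\nu$; the four $n^{-2}$ corrections correspond respectively to the $\Lambda_4$ term of the exponent, to the quadratic corrections of $F_t$ and $G_t$ separately (yielding $\mathfrak B_2^+(f)$ and $\mathfrak B_2^-(g)$), to the cross product $\partial_t F_0\otimes\partial_t G_0$ (yielding $\mathfrak B_1^+(f)\otimes\mathfrak B_1^-(g)$), and to the quadratic expansion of $\int\Pi_t(\mathbf 1)\, d\bar\mu$ (which absorbs the $\mathfrak B_0$ contribution).

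The main obstacle is the identification of the raw outputs of the Taylor expansions of $F_t, G_t, \Pi_t$ at $t=0$ with the asymptotic quantities $\mathfrak B_k^\pm(\cdot), \mathfrak B_0, \Lambda_4$ of the statement, all of which are defined through limits as $m\to\pm\infty$. The key observation is that the derivatives of $\Pi_t$ at $0$ naturally involve the resolvent $(I-P_0)^{-1}$ applied to $\kappa$; when paired with the cell moments of $f$ (resp.\ $g$), the resulting series can be telescoped using $\mathcal I_m-\mathcal I_0=\sum_{k=0}^{m-1}\kappa\circ T^k$ and its backward analogue, and then assembled, thanks to exponential decay of correlations for $(\bar M,\bar\mu,\bar T)$, into the infinite-time moments $\mathfrak B_1^+(f)=\lim_m\int_M f\cdot\mathcal I_m\, d\nu$, $\mathfrak B_2^+(f)=\lim_m\int_M f\cdot(\mathcal I_m^{\otimes 2}-m\Sigma^2)\, d\nu$, together with their negative-$m$ counterparts for $g$. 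Verifying convergence of each such limit under the moment hypothesis, and tracking the sign and symmetry conventions between the forward ($+$) and backward ($-$) sides, will constitute the bulk of the technical work in Section \ref{sec:MAIN} and Appendix \ref{sec:coeff}.
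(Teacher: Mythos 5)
Your proposal follows essentially the same route as the paper: Fourier inversion of $\mathbf 1_{\{S_n=\ell'-\ell\}}$, the Nagaev--Guivarc'h decomposition $P_t^n=\lambda_t^n\Pi_t+R_t^n$, a second-order Laplace expansion of the resulting Gaussian integral exploiting the evenness of $\lambda$, and the identification of the Taylor coefficients with the limits $\mathfrak B_k^{\pm}$, $\mathfrak B_0$, $\Lambda_4$ via exponential decay of correlations --- precisely the content of Propositions \ref{TLL}, \ref{TLL2}, \ref{Amn!!} and \ref{lambda04}. The only points you elide are the reduction of dynamically Lipschitz observables to the Young tower (the paper conditions on $\mathcal Z_{-k_n}^{k_n}$ with $k_n=\lceil(\log n)^2\rceil$ and conjugates by $e^{it*\hat S_{k_n}}$), and the fact that $\mathfrak B_2^{\pm}$ and $\mathfrak B_1^{\pm}$ each mix contributions from $\Pi_0''$, from the $(\ell'-\ell)$-moments and from the cross terms rather than arising from single terms of your expansion --- but your final paragraph correctly locates this assembly as the main technical work.
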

Observe that we recover \eqref{Cncobg} since $\Sigma^2*\Sigma^{-2}=2$,
$$\mathfrak B_1^+(f-f\circ T)=\lim_{m\rightarrow +\infty}\int_Mf.\kappa\circ T^m\, d\nu=0 $$
and
\begin{eqnarray*}
\mathfrak B_2^+(f-f\circ T)&=&\lim_{n\rightarrow +\infty}\int_Mf.(\mathcal I_m^{\otimes 2}-\mathcal I_{m-1}^{\otimes 2})\\
&=&\lim_{m\rightarrow +\infty}\int_Mf.\left(\kappa^{\otimes 2}\circ T^{m-1}+2\sum_{k=0}^{m-2}(\kappa\circ T^k)\otimes\kappa\circ T^{m-1}\right)\, d\nu \\
&=&\lim_{m\rightarrow +\infty}\int_Mf\, d\nu \mathbb E_{\bar\mu}\left[\kappa^{\otimes 2}+2\sum_{k=1}^{m-1}\kappa\otimes\kappa\circ T^{k}\right]\, ,\\
&=&\Sigma^2\int_Mf\, d\nu \, ,
\end{eqnarray*}
where we used Proposition \ref{decoChernov}.

\begin{remark}
Note that
\begin{eqnarray*}
\mathfrak B_2^+(f)
&=&  \sum_{j,m\ge 0}\int_M f .\left(\kappa\circ T^j\otimes\kappa\circ T^m
    -\mathbb E_{\bar\mu}[\kappa\circ \bar T^j\otimes\kappa\circ\bar T^m]\right)\, d\nu\nonumber\\
&\ &+\int_M f\mathcal I_0^{\otimes 2}\, d\nu+2\sum_{m\ge 0}\int_M f.\mathcal I_0\otimes  \kappa\circ \bar T^m\, d\nu-\mathfrak B_0\int_Mf\, d\nu\, ,
\end{eqnarray*}
\begin{eqnarray*}
\mathfrak B_2^-(g)&=& \sum_{j,m\le -1}
\int_M g.(\kappa\circ T^j\otimes \kappa\circ T^m-\mathbb E_{\bar\mu}[\kappa\circ \bar T^j\otimes \kappa\circ\bar T^m])\, d\nu\\
&\ & +\int_M g.\mathcal I_0^{\otimes 2}\, d\nu
        -2 \sum_{m\le -1}\int_M g.\mathcal I_0\otimes\kappa \circ T^m\, d\nu-\mathfrak B_0\int_Mg\, d\nu\, ,
\end{eqnarray*}
\[
\mathfrak B_1^+(f)=\sum_{m\ge 0}\int_M f.\kappa\circ T^m\, d\nu+
\int_M f.\mathcal I_0\, d\nu \, ,
\]
\[
\mathfrak B_1^-(g)=-\sum_{m\le -1}\int_M g.\kappa\circ T^m\, d\nu+
\int_M g.\mathcal I_0\, d\nu \, ,
\]
and
\[
\mathfrak B_0=\sum_{m\in\mathbb Z}|m|\mathbb E_{\bar\mu}[\kappa\otimes\kappa\circ\bar T^m]\, .
\]
\end{remark}
\begin{corollary}
Under the assumptions of Theorem \ref{MAINbis}, if $\int_Mf\, d\nu=0$ and 
$\int_Mg\, d\nu=0$, then
$$\int_Mf.g\circ T^n\, d\nu =\frac{\Sigma^{-2}*(\mathfrak B_1^+(f)\otimes \mathfrak B_1^-(g))}
{n^2\,2\pi\sqrt{\det\Sigma^2}}
+o(n^{-2})\, .$$
\end{corollary}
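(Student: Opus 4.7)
The plan is short because the corollary is a direct specialisation of Theorem \ref{MAINbis}: one simply reads off the expansion \eqref{MAIN2} under the extra hypotheses $\int_M f\, d\nu = 0$ and $\int_M g\, d\nu = 0$ and checks which terms survive. First I would observe that the leading $1/n$ term of \eqref{MAIN2}, namely $\frac{1}{2\pi\sqrt{\det\Sigma^2}\,n}\int_M f\,d\nu\int_M g\,d\nu$, vanishes outright, and likewise the $\Lambda_4$ contribution at order $1/n^2$ disappears because it carries the prefactor $\int_M f\,d\nu\int_M g\,d\nu$. Hence
\[
\int_M f\cdot g\circ T^n\,d\nu \;=\; \frac{1}{2\pi\sqrt{\det\Sigma^2}}\cdot\frac{1}{2n^2}\,\Sigma^{-2}*\tilde{\mathfrak A}_2(f,g)\;+\;o(n^{-2}).
\]

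Next I would inspect the four summands composing $\tilde{\mathfrak A}_2(f,g)$. Three of them, namely $-\int_M f\,d\nu\,\mathfrak B_2^-(g)$, $-\int_M g\,d\nu\,\mathfrak B_2^+(f)$ and $-\int_M f\,d\nu\int_M g\,d\nu\,\mathfrak B_0$, each carry a factor $\int_M f\,d\nu$ or $\int_M g\,d\nu$ and therefore vanish. Only the purely bilinear term $2\,\mathfrak B_1^+(f)\otimes \mathfrak B_1^-(g)$ survives, so $\tilde{\mathfrak A}_2(f,g) = 2\,\mathfrak B_1^+(f)\otimes \mathfrak B_1^-(g)$ under the hypothesis. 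Inserting this in the previous display and cancelling the factor $2$ against the $1/2$ yields exactly
\[
\int_M f\cdot g\circ T^n\,d\nu \;=\; \frac{\Sigma^{-2}*(\mathfrak B_1^+(f)\otimes\mathfrak B_1^-(g))}{n^2\,2\pi\sqrt{\det\Sigma^2}}+o(n^{-2}),
\]
which is the claim.

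Since everything reduces to substitution into the already established Theorem \ref{MAINbis}, there is no real obstacle. The only minor things to double-check are the bookkeeping of the constants (the $1/2$ versus the $2$, and the location of the Gaussian normalisation $1/(2\pi\sqrt{\det\Sigma^2})$) and the fact that the $*$-contraction $\Sigma^{-2}*(\mathfrak B_1^+(f)\otimes \mathfrak B_1^-(g))$ is well-defined: $\mathfrak B_1^\pm$ are elements of $\mathbb C^2$ (identified with $1$-linear forms), their tensor product is a symmetric $2$-linear form on $\mathbb C^2$, and $\Sigma^{-2}$ is also a symmetric $2$-linear form, so $\Sigma^{-2}*(\mathfrak B_1^+(f)\otimes\mathfrak B_1^-(g))$ is a scalar, as required.
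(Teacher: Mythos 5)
Your proof is correct and is exactly the argument the paper intends: the corollary is stated as an immediate specialisation of Theorem \ref{MAINbis}, and your substitution (killing the $1/n$ term, the $\Lambda_4$ term, and three of the four summands of $\tilde{\mathfrak A}_2$, then cancelling the $2$ against the $1/2$) is the whole content. The bookkeeping of constants and the well-definedness of the $*$-contraction check out.
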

Two natural examples of zero integral functions are $\mathbf 1_{\mathcal C_0}-\mathbf 1_{\mathcal C_{e_1}}$ with $e_1=(1,0)$ or $f\mathcal C_0$ with $\int_{\mathcal C_0}f\, d\nu=0$. Note that
$$\int_M((\mathbf 1_{\mathcal C_0}-\mathbf 1_{\mathcal C_{e_1}}).(\mathbf 1_{\mathcal C_0}-\mathbf 1_{\mathcal C_{e_1}})\circ T^n\mathbf)\, d\nu \sim \frac{\sigma^2_{2,2}}{n^2\,2\pi({\det\Sigma^2})^{3/2}},$$
with $\Sigma^2=(\sigma^2_{i,j})_{i,j=1,2}$ and that
$$\int_M(f\mathbf 1_{\mathcal C_0}.\mathbf 1_{\mathcal C_0}\circ T^n\mathbf)\, d\nu \sim -\frac{1}{n^2\,2\pi({\det\Sigma^2})^{3/2}}\sum_{m\ge 0}
    \mathbb E_{\bar\mu}[f.(\sigma^2_{2,2}\kappa_1+\sigma^2_{1,1}\kappa_2)\circ T^m]\, ,$$
with $\kappa=(\kappa_1,\kappa_2)$, provided the sum appearing in the last formula is non null.
As noticed in introduction, it may happen
that \eqref{MAIN2} provides only $\int_M f.g\circ T^n=o(n^{-2})$.
This is the case for example if $\int_Mg\, d\nu=0$ and if $f$ has the form 
$f(q+\ell,\vec v)=f_0(q,\vec v).h_\ell$ with $\mathbb E_{\bar\mu}[f_0]=0$ and $\sum_\ell h_\ell=0$.

Hence it can be useful to go further in the asymptotic expansion, which is possible thanks to Theorem \ref{MAIN}. A formula for the
term of order $n^{-3}$ when $\int_M f\, d\nu=\int_Mg\, d\nu=\tilde{\mathfrak A}_2(f,g)=0$ is stated in theorem \ref{MAINter}
and gives the following estimate, showing that, for some
observables, $C_n(f,g)$ has order $n^{-3}$.
\begin{proposition}\label{casparticulier}
If $f$ and $g$
can be decomposed in $f(q+\ell,\vec v)=f_0(q,\vec v).h_\ell$
and $g(q+\ell,\vec v)=g_0(q,\vec v).q_\ell$ with $\mathbb E_{\bar\mu}[f_0]=\mathbb E_{\bar\mu}[g_0]=0$ and $\sum_\ell q_\ell=\sum_\ell h_\ell=0$ such that $\sum_{\ell\in\mathbb Z^2}|\ell|^4(\Vert f\mathbf 1_{\mathcal C_\ell}\Vert_{(\xi)}+\Vert g\mathbf 1_{\mathcal C_\ell}\Vert_{(\xi)})<\infty$. Then
$$\int_Mf.g\circ T^n\, d\nu=\frac {(\Sigma^{-2})^{\otimes 2}} {2\pi\sqrt{\det \Sigma^2}n^3}*\frac{\mathfrak B_2^+(f)\otimes
\mathfrak B_2^-(g)}4+o(n^{-3})\, ,$$
with here 
$$
\frac{\mathfrak B_2^+(f)\otimes
\mathfrak B_2^-(g)}4=
-\left(\sum_{\ell\in\mathbb Z^2} h_\ell.\ell\right)\otimes
\left(\sum_{j\ge 0}\mathbb E_{\bar\mu}[f_0.\kappa\circ T^j]\right)\otimes\left(\sum_{\ell\in\mathbb Z^2} q_\ell.\ell\right)
   \otimes \left(\sum_{m\le -1}\mathbb E_{\bar\mu}[g_0.\kappa\circ T^m]\right)\, .$$
\end{proposition}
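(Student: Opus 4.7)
The plan is to reduce the proposition to an application of Theorem \ref{MAINter} (the announced order-$3$ expansion) and then to specialize the coefficients using the factored structure of $f$ and $g$. First I would verify that all lower-order terms vanish. The equalities $\int_M f\,d\nu=\bigl(\sum_\ell h_\ell\bigr)\mathbb E_{\bar\mu}[f_0]=0$ and $\int_M g\,d\nu=0$ are immediate. Using the series representation of $\mathfrak{B}_1^+(f)$ given in the Remark, together with the $\mathbb Z^2$-invariance $\kappa\circ T^m(q+\ell,\vec v)=\kappa\circ\bar T^m(q,\vec v)$, every term factors as either $\bigl(\sum_\ell h_\ell\bigr)\mathbb E_{\bar\mu}[f_0\,\kappa\circ\bar T^m]$ or $\bigl(\sum_\ell h_\ell\ell\bigr)\mathbb E_{\bar\mu}[f_0]$, both of which vanish; hence $\mathfrak{B}_1^+(f)=0$, and symmetrically $\mathfrak{B}_1^-(g)=0$. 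Plugging into the formula for $\tilde{\mathfrak{A}}_2(f,g)$ kills every summand, so $\tilde{\mathfrak{A}}_2(f,g)=0$ and the $n^{-1}$ and $n^{-2}$ terms of \eqref{MAIN2} disappear. Theorem \ref{MAINter} then applies and yields an $n^{-3}$ asymptotic.

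The remaining work is to identify the $n^{-3}$ coefficient. I would compute $\mathfrak{B}_2^+(f)$ from the series in the Remark, treating each piece separately. The term $-\mathfrak{B}_0\int_M f\,d\nu$ vanishes since $\int_M f\,d\nu=0$. The double sum $\sum_{j,m\ge 0}\int_M f(\kappa\circ T^j\otimes\kappa\circ T^m - \mathbb E_{\bar\mu}[\kappa\circ\bar T^j\otimes\kappa\circ\bar T^m])\,d\nu$ vanishes because both the random integrand and the deterministic subtraction collapse to $\mathbb Z^2$-invariant quantities in the fiber direction, producing prefactors $\sum_\ell h_\ell=0$ (respectively $\mathbb E_{\bar\mu}[f_0]=0$). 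Likewise $\int_M f\,\mathcal I_0^{\otimes 2}\,d\nu=\bigl(\sum_\ell h_\ell\ell^{\otimes 2}\bigr)\mathbb E_{\bar\mu}[f_0]=0$. Only the cross term survives, giving
\[
\mathfrak{B}_2^+(f)=2\Bigl(\sum_\ell h_\ell\ell\Bigr)\otimes\sum_{m\ge 0}\mathbb E_{\bar\mu}[f_0\,\kappa\circ\bar T^m].
\]
An entirely analogous computation with the sign $-2$ in front of the cross term in $\mathfrak{B}_2^-(g)$ produces
\[
\mathfrak{B}_2^-(g)=-2\Bigl(\sum_\ell q_\ell\ell\Bigr)\otimes\sum_{m\le -1}\mathbb E_{\bar\mu}[g_0\,\kappa\circ\bar T^m].
\]
Forming the tensor product and dividing by $4$ introduces the factor $(+2)(-2)/4=-1$, which is exactly the minus sign in the displayed formula of the proposition.

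The main obstacle in this plan is not really a computational one but the availability of Theorem \ref{MAINter}: all the work lies in its proof, which must establish the coefficient $c_2(f,g)$ in the form that accommodates the cubic cancellations exploited above. The bookkeeping issues here -- absolute convergence of the series defining $\mathfrak{B}_2^\pm$, and the licit interchange of $\sum_\ell$ with integration -- are controlled by the hypothesis $\sum_\ell|\ell|^4(\|f\mathbf 1_{\mathcal C_\ell}\|_{(\xi)}+\|g\mathbf 1_{\mathcal C_\ell}\|_{(\xi)})<\infty$ together with the exponential decay of correlations for dynamically Lipschitz observables on the Young tower (Section \ref{sec:young}), and cause no real trouble. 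Once those are in hand, the proposition follows by straightforward substitution into the formula delivered by Theorem \ref{MAINter}.
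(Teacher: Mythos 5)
Your verification of the vanishing of the lower-order terms is correct ($\int_Mf\,d\nu=\int_Mg\,d\nu=0$, $\mathfrak B_1^+(f)=\mathfrak B_1^-(g)=0$, hence $\tilde{\mathfrak A}_2(f,g)=0$), and your evaluations of $\mathfrak B_2^+(f)$ and $\mathfrak B_2^-(g)$ agree with the paper and correctly reproduce the second display of the proposition, including the sign $(+2)(-2)/4=-1$. However, there is a genuine gap in the middle of your argument: you treat the first display — that the $n^{-3}$ coefficient equals $\frac{(\Sigma^{-2})^{\otimes 2}}{2\pi\sqrt{\det\Sigma^2}}*\frac{\mathfrak B_2^+(f)\otimes\mathfrak B_2^-(g)}{4}$ — as if it were what Remark \ref{MAINter} delivers, and then only verify the values of $\mathfrak B_2^\pm$. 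But Remark \ref{MAINter} expresses the coefficient as
\[
\frac{(\Sigma^{-2})^{\otimes 2}}{2\pi\sqrt{\det\Sigma^2}}*\sum_{\ell,\ell'}\Bigl(\tfrac{A_4}{24}+\tfrac{A_0}{24}(\ell'-\ell)^{\otimes 4}+\tfrac{i}{6}A_1\otimes(\ell'-\ell)^{\otimes 3}-\tfrac14 A_2\otimes(\ell'-\ell)^{\otimes 2}-\tfrac{i}{6}A_3\otimes(\ell'-\ell)\Bigr),
\]
and this does \emph{not} reduce to the $\mathfrak B_2^\pm$ product for general $f,g$ with $\int f=\int g=\tilde{\mathfrak A}_2=0$; the reduction is exactly where the product structure $u_\ell=h_\ell f_0$, $v_{\ell'}=q_{\ell'}g_0$ must be used, and your proposal never performs it.

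Concretely, the missing steps (which constitute the paper's proof) are: (i) $A_0(u_\ell,v_{\ell'})=A_1(u_\ell,v_{\ell'})=0$ for every $\ell,\ell'$ because $\mathbb E_{\bar\mu}[u_\ell]=\mathbb E_{\bar\mu}[v_{\ell'}]=0$; (ii) $\sum_{\ell,\ell'}A_4(u_\ell,v_{\ell'})=A_4\bigl(\sum_\ell u_\ell,\sum_{\ell'}v_{\ell'}\bigr)=0$ and $\sum_{\ell,\ell'}A_3(u_\ell,v_{\ell'})\otimes(\ell'-\ell)=\sum_{\ell,\ell'}h_\ell q_{\ell'}A_3(f_0,g_0)\otimes(\ell'-\ell)=0$, both using the bilinearity of $A_m$ (from its limit definition, Proposition \ref{Amn!!}) together with $\sum_\ell h_\ell=\sum_\ell q_\ell=0$; (iii) in the surviving term $-\frac14\sum_{\ell,\ell'}h_\ell q_{\ell'}A_2(f_0,g_0)\otimes(\ell'-\ell)^{\otimes 2}$ one expands $(\ell'-\ell)^{\otimes 2}$, the squares drop out for the same reason, and the cross term gives $\frac12 A_2(f_0,g_0)\otimes\sum_\ell h_\ell\ell\otimes\sum_{\ell'}q_{\ell'}\ell'$; (iv) $A_2(f_0,g_0)=-2B_1^+(f_0)\otimes B_1^-(g_0)$ by Proposition \ref{Amn!!}, since the remaining terms of \eqref{EEEE} are multiplied by $\mathbb E_{\bar\mu}[f_0]$ or $\mathbb E_{\bar\mu}[g_0]$. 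Only after this does one recognize the result as $\frac{\mathfrak B_2^+(f)\otimes\mathfrak B_2^-(g)}{4}$ via the computation you did carry out. Without step (ii) in particular, your argument does not rule out a nonzero contribution from $A_3$ and $A_4$ at order $n^{-3}$.
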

\section{General results for $\mathbb Z^d$-extensions and key ideas}\label{sec:GENE}
In this section we state general results in the general context of $\mathbb Z^d$-extensions over dynamical systems satisfying good spectral
properties. This section contains the rough ideas of the proofs for the billiard, without some complications due to the quotient tower. Moreover
the generality of our assumptions makes our results implementable to a wide class of models with present and future developments of the Nagaev-Guivarch method of perturbation of transfer operators.\medskip

We consider a dynamical system $(M,\nu,T)$ given by the $\mathbb Z^d$-extension of a probability preserving dynamical system $(\bar M,\bar\mu,\bar T)$ by $\kappa:\bar M\rightarrow\mathbb Z^d$.
This means that $M=\bar M\times\mathbb Z^d$, $\nu=\bar\mu\otimes\mathfrak m_d$ where $\mathfrak m_d$ is the counting measure on $\mathbb Z^d$ and with
$$\forall (x,\ell)\in\bar M\times\mathbb Z^d,\quad T(x,\ell)=(\bar T(x),\ell+\kappa(x))\, , $$
so that
$$\forall (x,\ell)\in\bar M\times\mathbb Z^d,\ \forall n\ge 1,\quad T^n(x,\ell)=(\bar T^n(x),\ell+S_n(x))\, , $$
with $S_n:=\sum_{k=0}^{n-1}\kappa\circ\bar T^k$.
Let $P$ be the transfer operator of $\bar T$, i.e. the dual operator
of $f\mapsto f\circ\bar T$.
Our method is based on the following key fomulas:
\begin{eqnarray}
\int_M f.g\circ T^n\, d\nu&=&\sum_{\ell,\ell'\in\mathbb Z^2}\mathbb E_{\bar\mu}[f(\cdot,\ell).\mathbf 1_{S_n=\ell'-\ell}.g(\bar T^n(\cdot),\ell')]\label{FORMULECLEF0}\\
&=&\sum_{\ell,\ell'\in\mathbb Z^d}
\mathbb E_{\bar\mu}[P^n(\mathbf 1_{S_n=\ell'-\ell}\, f(\cdot,\ell))g(\cdot,{\ell'})]\, 
\label{FORMULECLEF}
\end{eqnarray}
and
\begin{eqnarray}
P^n(\mathbf 1_{S_n=\ell}\, u)&=&\frac 1{(2\pi)^d}\int_{[-\pi,\pi]^d}e^{-it*\ell}  P^n(e^{it*S_n}u)\, dt\nonumber\\
&=&\frac 1{(2\pi)^d}\int_{[-\pi,\pi]^d}e^{-it*\ell}
P_t^n(u)\, dt\, ,    \label{formulespectrale}
\end{eqnarray}
with $P_t:=P(e^{it*\kappa}\cdot)$.
Note that \eqref{FORMULECLEF} makes a link between 
mixing properties and the local limit theorem and that \eqref{formulespectrale} shows the importance of the study of the family of perturbed operators $(P_t)_t$ in this study.

We will make the following general assumptions about $(P_t)_t$. 
\begin{hypothesis}[Spectral hypotheses]\label{HHH}
There exist two complex Banach spaces $(\mathcal B,\Vert{\cdot}\Vert)$ and $(\mathcal B_0,\Vert\cdot\Vert_0)$ such that:
\begin{itemize}
\item $\mathcal B \hookrightarrow \mathcal B_0\hookrightarrow  L^1 (\bar M, \bar \mu)$ and 
$\mathbf{1}_{\bar M} \in \mathcal B$ ,
\item there exist constants $b\in(0,\pi]$, $C>0$ and $\vartheta \in(0,1)$ and three functions $\lambda_\cdot:[-b,b]^d \to \mathbb C$ and
$\Pi_\cdot,R_\cdot:[-b,b]^d \to \mathcal L(\mathcal B,\mathcal B)$ 
such that $\lim_{t\rightarrow 0}\lambda_t=1$ and $\lim_{t\rightarrow 0}\Vert \Pi_t-\mathbb E_\mu [\cdot]\mathbf 1_{\bar M}\Vert_{\mathcal L(\mathcal B,\mathcal B_0)}=0$ and such that, in $\mathcal L(\mathcal B,\mathcal B)$,
\begin{equation}\label{decomp}
\forall u\in[-b,b]^d,\quad P_u =\lambda_u\Pi_u+R_u,\quad 
\Pi_u R_u      = R_u \Pi_u = 0,\quad
\Pi_u^{2}  = \Pi_{u}\, ,
\end{equation}
\begin{equation}
\sup_{u\in [-b,b]^d} \Vert{R_u^k}\Vert_{\mathcal L(\mathcal B,\mathcal B_0)} \leq C \vartheta^k,\quad\sup_{u\in[-\pi,\pi]^d \setminus [-b,b]^d} \Vert{P_u^k}\Vert_{\mathcal L(\mathcal B,\mathcal B_0)}  \leq C \vartheta	^k. 	     \end{equation}
\end{itemize}
\end{hypothesis}
Note that \eqref{decomp} ensures that
\begin{equation}\label{decomp2}
\forall u\in[-b,b],\quad  P_u^n =\lambda_u^n\Pi_u+R_u^n\,  .
\end{equation}

We will make the following assumption on the expansion of $\lambda$ at $0$.
\begin{hypothesis}\label{HHH1}
Let $Y$ be a random variable with integrable characteristic
function $a_.:=e^{-\psi(\cdot)}$ and with density function $\Phi$.
Assume that there exists a sequence of invertible matrices $(\Theta_n)_n$ such that $\lim_{n\rightarrow +\infty}\Theta_n^{-1}=0$
and
\begin{equation}\label{asymptlambda}
\forall u,\quad \lambda_{^{t}\Theta_n^{-1}\cdot u}^n\sim e^{-\psi(u)}=a_u\, ,\quad\mbox{as}\ n\rightarrow +\infty
\end{equation}
(where $^{t}\Theta_n^{-1}$ stands for the transpose matrix of $\Theta_n^{-1}$) and
$$
\forall u\in[-b,b]^d,\quad 
   |\lambda_{u}^n|\le 2\left| e^{-\psi({}^t\Theta_n\cdot u)}\right| \, .$$
\end{hypothesis}
Note that, under Hypothesis \ref{HHH} and if \eqref{asymptlambda} holds true, then
$$\forall u\in\mathbb R^d,\quad 
e^{-\psi(u)}=\lim_{n\rightarrow +\infty}\lambda_{^{t}\Theta_n^{-1}\cdot u}^n=\lim_{n\rightarrow +\infty}\mathbb E_{\bar\mu}[P_{^{t}\Theta_n^{-1}\cdot u}^n\mathbf 1]=\lim_{n\rightarrow +\infty}\mathbb E_{\bar\mu}[e^{iu*(\Theta_n^{-1} S_n)}],$$
and so $(\Theta_n^{-1} S_n)_n$ converges in distribution to $Y$.
If
$Y$ has a stable distribution of index $\alpha\in(0,2]\setminus\{1\}$, i.e.
$$ \psi(u)=\int_{\mathbb S^1}|u*s|^{\alpha}(1+\tan\frac \pi\alpha \mbox{sign}(u*s))\, d\Gamma(u),$$
where $\Gamma$ is a Borel measure on the unit sphere $S^1=\{x\in\mathbb R^d\ :\ x*x=1\}$
and if
\begin{equation*}
\lambda_u
= e^{-\psi(u)L(|u|^{-1})} + o\left(|u|^\alpha L(|u|^{-1})\right)\, ,\quad \mbox{as }
   u\rightarrow 0\, ,
\end{equation*}
with $L$ slowly varying at infinity,
then Hypothesis \ref{HHH1} holds true with $\Theta_n:= \mathfrak a_n\, Id$ with
$\mathfrak{a}_n:= \inf\{x>0\, :\, n |x|^{-\alpha} L(x) \geq 1\}\, .
$

But Hypothesis \ref{HHH1} allows also the study of situations with anisotropic scaling.

Before stating our first general result, let us introduce an additional notation.
Under Hypothesis \ref{HHH}, for any function $u:\bar M\rightarrow\mathbb C$, we write $\Vert u\Vert_{\mathcal B'_0}:=\sup_{h\in\mathcal B_0}|\mathbb E_{\bar\mu}[u.h]|$.

\begin{theorem}\label{MAINGENE0}
Assume Hypotheses \ref{HHH} and \ref{HHH1}. Let $f,g:M\rightarrow \mathbb C$
be such that
$$\Vert f\Vert_{+}:=\sum_{\ell\in\mathbb Z^d} \Vert f(\cdot,\ell)\Vert
 <\infty\quad\mbox{and}\quad   \Vert g\Vert_{+,\mathcal B_0'}:=\sum_{\ell\in\mathbb Z^d}\Vert g(\cdot,\ell)\Vert_{\mathcal B_0 '}<\infty.$$
Then 
$$\int_Mf.g\circ T^n\, d\nu=\frac{\Phi(0)}{\det \Theta_n}\left(\int_Mf\, d\nu\, \int_Mg\, d\nu +o(1)\right),\ \mbox{as}\ n\rightarrow +\infty\, .$$
\end{theorem}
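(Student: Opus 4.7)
The plan is to combine the two key formulas \eqref{FORMULECLEF}--\eqref{formulespectrale} into a single Fourier-type representation on $[-\pi,\pi]^d$, then use the spectral splitting of Hypothesis~\ref{HHH} together with the scaling of Hypothesis~\ref{HHH1}, and pass to the limit by dominated convergence. Writing $f_\ell:=f(\cdot,\ell)$ and $g_{\ell'}:=g(\cdot,\ell')$, combining the two formulas gives
\[
\int_M f\cdot g\circ T^n\, d\nu
=\frac{1}{(2\pi)^d}\sum_{\ell,\ell'\in\ZZ^d}
\int_{[-\pi,\pi]^d} e^{-it*(\ell'-\ell)}\,\mathbb E_{\bar\mu}\bigl[P_t^n(f_\ell)\,g_{\ell'}\bigr]\,dt\,.
\]

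First I would truncate the $t$-integral to $[-b,b]^d$: the complement contributes at most $C\vartheta^n\,\Vert f\Vert_+\,\Vert g\Vert_{+,\mathcal B_0'}$ by the second bound of Hypothesis~\ref{HHH} applied pointwise (pairing with $g_{\ell'}\in\mathcal B_0'$), which is $o(|\det\Theta_n|^{-1})$ since $\Theta_n^{-1}\to 0$. On $[-b,b]^d$, I would plug in \eqref{decomp2} and discard the $R_t^n$-part by the same exponential bound. This reduces matters to
\[
I_n:=\frac{1}{(2\pi)^d}\sum_{\ell,\ell'}
\int_{[-b,b]^d}\lambda_t^n\, e^{-it*(\ell'-\ell)}\,
\mathbb E_{\bar\mu}\bigl[\Pi_t(f_\ell)\,g_{\ell'}\bigr]\,dt\,.
\]

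Next I would perform the change of variable $t={}^t\Theta_n^{-1}u$, producing a Jacobian factor $|\det\Theta_n|^{-1}$; the new domain $D_n:={}^t\Theta_n\cdot[-b,b]^d$ exhausts $\RR^d$. The three pointwise limits are then: $\lambda_{{}^t\Theta_n^{-1}u}^n\to e^{-\psi(u)}$ by \eqref{asymptlambda}; $e^{-i({}^t\Theta_n^{-1}u)*(\ell'-\ell)}\to 1$ for fixed $u,\ell,\ell'$ since $\Theta_n^{-1}\to 0$; and $\mathbb E_{\bar\mu}[\Pi_{{}^t\Theta_n^{-1}u}(f_\ell)\,g_{\ell'}]\to \mathbb E_{\bar\mu}[f_\ell]\,\mathbb E_{\bar\mu}[g_{\ell'}]$ from the norm convergence $\Pi_t\to \mathbb E_{\bar\mu}[\cdot]\mathbf 1_{\bar M}$ in $\mathcal L(\mathcal B,\mathcal B_0)$ paired against $g_{\ell'}\in\mathcal B_0'$. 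Combining these limits and summing in $\ell,\ell'$ formally yields
\[
I_n\sim\frac{1}{|\det\Theta_n|}\,\frac{1}{(2\pi)^d}\int_{\RR^d}e^{-\psi(u)}\,du\ \cdot\ \Bigl(\sum_\ell\mathbb E_{\bar\mu}[f_\ell]\Bigr)\Bigl(\sum_{\ell'}\mathbb E_{\bar\mu}[g_{\ell'}]\Bigr),
\]
and Fourier inversion identifies $(2\pi)^{-d}\int e^{-\psi}=\Phi(0)$, while the two sums equal $\int_M f\,d\nu$ and $\int_M g\,d\nu$.

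The main obstacle will be the simultaneous justification of dominated convergence in the variable $u$ and in the summation indices $(\ell,\ell')$. The $u$-domination is the whole point of the second part of Hypothesis~\ref{HHH1}: after the change of variable it reads $|\lambda_{{}^t\Theta_n^{-1}u}^n|\le 2|e^{-\psi(u)}|$, an $L^1(\RR^d)$ bound by the assumed integrability of $a_\cdot=e^{-\psi(\cdot)}$. The $(\ell,\ell')$-domination rests on the estimate
\[
\bigl|\mathbb E_{\bar\mu}[\Pi_t(f_\ell)\,g_{\ell'}]\bigr|\le \Vert\Pi_t\Vert_{\mathcal L(\mathcal B,\mathcal B_0)}\,\Vert f_\ell\Vert\,\Vert g_{\ell'}\Vert_{\mathcal B_0'}\,,
\]
with $\Vert\Pi_t\Vert_{\mathcal L(\mathcal B,\mathcal B_0)}$ uniformly bounded on $[-b,b]^d$ (by the assumed $\mathcal L(\mathcal B,\mathcal B_0)$-continuity at $0$, up to shrinking $b$ if needed), combined with the summability hypotheses $\Vert f\Vert_+<\infty$ and $\Vert g\Vert_{+,\mathcal B_0'}<\infty$. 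Once these two layers of domination are in place the limit can be taken term by term and the announced asymptotic follows.
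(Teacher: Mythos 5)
Your proposal is correct and follows essentially the same route as the paper: both proofs combine \eqref{FORMULECLEF} with \eqref{formulespectrale}, discard the $R_t^n$ part and the region outside $[-b,b]^d$ via the exponential bounds of Hypothesis \ref{HHH}, rescale by $t={}^t\Theta_n^{-1}u$, and pass to the limit by dominated convergence using the $L^1$ majorant $2|e^{-\psi}|$ from Hypothesis \ref{HHH1} together with the summability of $\Vert f(\cdot,\ell)\Vert$ and $\Vert g(\cdot,\ell)\Vert_{\mathcal B_0'}$. The only cosmetic difference is that the paper first performs the $u$-integral to obtain the uniform operator-level estimate $P^n(\mathbf 1_{S_n=\ell}\cdot)=\frac{\Phi(\Theta_n^{-1}\ell)}{\det\Theta_n}\Pi_0+\varepsilon_{n,\ell}$ and then lets $\Phi(\Theta_n^{-1}(\ell'-\ell))\to\Phi(0)$ by a second application of dominated convergence over $(\ell,\ell')$, whereas you take the phase-factor limit inside a single joint dominated-convergence argument; both are valid.
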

\begin{proof}
For every positive integer $n$ and every $\ell\in\mathbb Z^d$,
combining \eqref{formulespectrale} with Hypothesis \ref{HHH},
the following equalities hold in $\mathcal L(\mathcal B,\mathcal B_0)$:
\begin{eqnarray}
P^n(\mathbf 1_{S_n=\ell}\cdot)
  &=&\frac 1{(2\pi)^d}\int_{[-b,b]^d}
  e^{-it*\ell}\lambda_t^n\Pi_t(\cdot)\, dt+O(\vartheta^n)\nonumber\\
  &=&\frac 1{(2\pi)^d\det{\Theta_n}}\int_{{}^t\Theta_n[-b,b]^d}
  e^{-iu*(\Theta_ n^{-1}\ell)}\lambda_{{}^t\Theta_n^{-1} u}
       ^n\Pi_{{}^t\Theta_n^{-1} u}(\cdot)\, du+O(\vartheta^n)\nonumber\\
  &=&\frac 1{(2\pi)^d\det{\Theta_n}}\int_{\mathbb R^d}
  e^{-iu*(\Theta_ n^{-1}\ell)}e^{-\psi(u)}
      \Pi_{0}(\cdot)\, du+\varepsilon_{n,\ell}\nonumber\\
&=&\frac{\Phi(\Theta_n^{-1} \ell)}{\det{\Theta_n}}\Pi_0 +\varepsilon_{n,\ell}\, ,\label{egalitecentrale}
\end{eqnarray}
with $\sup_\ell \Vert\varepsilon_{n,\ell}\Vert_{\mathcal L(\mathcal B,\mathcal B_0)}=o(\det\Theta_n^{-1})$
due to the dominated convergence theorem applied to 
$\left\Vert\lambda^n_{{}^t\Theta_n^{-1}u}\Pi_{{}^t\Theta_n^{-1}u}-e^{-\psi(u)}\Pi_0\right\Vert_{\mathcal L(\mathcal B,\mathcal B_0)}\mathbf 1_{{}^t\Theta_n[-b,b]^d}(u)$.
Setting $u_\ell:=f(\cdot,\ell)$ and $v_\ell:=g(\cdot,\ell)$ and using
\eqref{FORMULECLEF}, we obtain
\begin{eqnarray}
\int_Mf.g\circ T^n\, d\nu
&=&\sum_{\ell,\ell'\in\mathbb Z^d}\left(\frac{\Phi(\Theta_n^{-1}( {\ell'-\ell}))}{\det\Theta_n}\mathbb E_{\bar\mu}[u_\ell]\, \mathbb E_{\bar\mu}[v_{\ell'}]+\mathbb E_{\bar\mu}[v_{\ell'}\varepsilon_{n,\ell}(u_\ell)]\right)\nonumber\\
&=&\sum_{\ell,\ell'\in\mathbb Z^d}\left(\frac{\Phi(\Theta_n^{-1}( {\ell'-\ell}))}{\det\Theta_n}\mathbb E_{\bar\mu}[u_\ell]\, \mathbb E_{\bar\mu}[v_{\ell'}]\right)+O\left(\sum_{\ell,\ell'\in\mathbb Z^d}\Vert v_{\ell'}\Vert_{\mathcal B_0'}\, \Vert\varepsilon_{n,\ell}\Vert_{\mathcal L(\mathcal B,\mathcal B_0)}\Vert u_\ell\Vert\right)\, \nonumber\\
&=&\sum_{\ell,\ell'\in\mathbb Z^d}\frac{\Phi(\Theta_n^{-1} (\ell'-\ell))}{\det\Theta_n}\mathbb E_{\bar\mu}[u_\ell]\, \mathbb E_{\bar\mu}[v_{\ell'}] +\tilde\varepsilon_n(f,g)\, ,\label{controlecentral}
\end{eqnarray}
with $ \lim_{n\rightarrow +\infty}\sup_{f,g}\frac{\det\Theta_{n}\, \tilde\varepsilon_n(f,g)}{\Vert g\Vert_{+,\mathcal B_0'}\Vert f\Vert_{+}}=0$.
Now, due to the dominated convergence theorem and since $\Phi$
is continuous and bounded,
$$ \lim_{n\rightarrow +\infty}\sum_{\ell,\ell'\in\mathbb Z^d}{\Phi\left(\Theta_n^{-1}(\ell'-\ell)\right)}\mathbb E_{\bar\mu}[u_\ell]\, \mathbb E_{\bar\mu}[v_{\ell'}]=\Phi(0)\sum_{\ell,\ell'\in\mathbb Z^2}\mathbb E_{\bar\mu}[u_\ell]\, \mathbb E_{\bar\mu}[v_{\ell'}]=
\Phi(0)\int_Mf\, d\nu\, \int_Mg\, d\nu\, ,$$
which ends the proof.
\end{proof}
We will reinforce Hypothesis \ref{HHH1}. Notations $\lambda_0^{(k)}$, $a_0^{(k)}$,
$\Pi_0^{(k)}$ stand for the $k$-th derivatives of $\lambda$, $a$ and $\Pi$
at 0.
\begin{theorem}\label{THMGENE}
Assume Hypothesis \ref{HHH} with $\mathcal B_0=\mathcal B$.
Let $K, M,P$ be three integers such that $K\ge d/2$, $ 3\le P\le M+1$ and
\begin{equation}\label{MAJOM}
-\left\lfloor \frac M P\right\rfloor+ \frac{M}2\ge K\, .
\end{equation}
Assume moreover that $\lambda_\cdot$
is $C^{M}$-smooth and that there exists a positive symmetric matrix $\Sigma^2$
such that
\begin{equation}\label{DLlambda}
\lambda_u -1\sim -\psi(u):=-\frac 12\Sigma^2*u^{\otimes 2}\, ,\quad\mbox{as}\ u\rightarrow 0\, .
\end{equation}
Assume that, for every $k<P$, $\lambda^{(k)}_0=a^{(k)}_0$ with $a_t=e^{-\psi(t)}$, for every $k<P$.
Assume moreover that the functions
$\Pi$ and $R$ are $C^{2K}$-smooth.
Let $f,g:M\rightarrow\mathbb C$ be such that
\begin{equation}\label{hypofg}
\sum_{\ell\in\mathbb Z^d} (\Vert f(\cdot,\ell)\Vert+
    \Vert g(\cdot,\ell)\Vert_{\mathcal B '})<\infty\, .
    \end{equation}
Then
\begin{equation}\label{FFFF1}
\int_Mf.g\circ T^n\, d\nu=\sum_{\ell,\ell'\in\mathbb Z^d} \sum_{m=0}^{2K}\frac {1}{m!}\sum_{j=0}^{M}
\frac {i^{m+j}}{(j)!}\frac{\Phi^{(m+j)}\left(\frac {\ell'-\ell}{\mathfrak a_{n}}\right)}{n^{\frac{d+m+j}2}}*(\mathbb E_{\bar\mu}[v_{\ell'} \Pi^{(m)}_0(u_\ell)]\otimes(\lambda^n/a^n)_0^{(j)})+o(n^{-K-\frac d2})\, .
\end{equation}
If moreover $\sum_{\ell\in\mathbb Z^d}|\ell|^{2K}(\Vert f(\cdot,\ell)\Vert+\Vert g(\cdot,\ell)\Vert_{\mathcal B'})<\infty$, then
\begin{eqnarray}
\int_Mf.g\circ T^n\, d\nu
&=&\sum_{m,j,r}\frac {i^{j+m}}{m!\, r!\, j!}
\left(\frac{\Phi^{(j+m+r)}(0)}{n^{\frac{j+d+m+r}2}}*(\lambda^n/a^n)_0^{(j)}\right)\nonumber\\
&\ &*\sum_{\ell,\ell'\in\mathbb Z^d}(\ell'-\ell)^{\otimes r}\otimes \mathbb E_{\bar\mu}[v_{\ell'} \Pi^{(m)}_0(u_\ell)]
+o(n^{-K-\frac d2})\, ,\label{decorrelation2b}
\end{eqnarray}
where the sum is taken over the $(m,j,r)$ with $m,j,r$ non negative integers such that $j+m+r\in 2\mathbb Z$ and $\frac{r+m+j}2-\lfloor \frac j P\rfloor\le K$.
\end{theorem}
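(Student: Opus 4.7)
I would refine the argument used for Theorem~\ref{MAINGENE0}, retaining more terms in the Taylor expansions at every step. Starting from \eqref{FORMULECLEF}--\eqref{formulespectrale} with $u_\ell := f(\cdot,\ell)$ and $v_{\ell'} := g(\cdot,\ell')$, and using the spectral decomposition \eqref{decomp2} together with the uniform bounds of Hypothesis~\ref{HHH} (which render both the $R_u^n$ contribution and the integration outside $[-b,b]^d$ exponentially small in $n$), I reduce to analyzing
\begin{equation*}
P^n(\mathbf{1}_{S_n=\ell}u_\ell) = \frac{1}{(2\pi)^d}\int_{[-b,b]^d} e^{-it*\ell}\,\lambda_t^n\,\Pi_t(u_\ell)\, dt + O(\vartheta^n\|u_\ell\|).
\end{equation*}
The core factorization is $\lambda_t^n = a_t^n\cdot(\lambda^n/a^n)_t$ with $a_t = e^{-\psi(t)}$; since $\psi$ is quadratic, the change of variables $t = u/\sqrt n$ (i.e.\ $\mathfrak a_n = \sqrt n$) gives $a_{u/\sqrt n}^n = e^{-\psi(u)}$ exactly, and the uniform majoration in Hypothesis~\ref{HHH1} provides the Gaussian dominator $|\lambda_{u/\sqrt n}^n|\le 2e^{-\psi(u)}$ needed to extend the domain to $\RR^d$ modulo an exponentially small error.

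I would then Taylor-expand the two smooth factors at $0$, to order $2K$ for $\Pi$ and to order $M$ for $(\lambda^n/a^n)$. The assumption $\lambda_0^{(k)} = a_0^{(k)}$ for $k<P$, together with Fa\`a di Bruno applied to $\exp(n\log(\lambda/a))$, yields $(\lambda^n/a^n)_0^{(j)} = O(n^{\lfloor j/P\rfloor})$, since only partitions into blocks of size $\ge P$ contribute. Hence the $j$-th term in that expansion is of size $n^{\lfloor j/P\rfloor - j/2}$, and condition \eqref{MAJOM}, namely $M/2 - \lfloor M/P\rfloor \ge K$, is precisely what forces the $j=M$ remainder to be absorbed into $o(n^{-K-d/2})$ after accounting for the $n^{-d/2}$ Jacobian. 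The moment identity
\begin{equation*}
\int_{\RR^d} u^{\otimes r}\, e^{-iu*\xi}\, e^{-\psi(u)}\, du = (2\pi)^d\, i^r\,\Phi^{(r)}(\xi),
\end{equation*}
obtained by differentiating the Fourier inversion for $\Phi$ $r$ times in $\xi$, then converts each cross term $(m,j)$ into a contribution of the desired form
\begin{equation*}
\frac{i^{m+j}}{m!\,j!}\,\frac{\Phi^{(m+j)}(\ell/\sqrt n)}{n^{(d+m+j)/2}}* \bigl(\Pi_0^{(m)}(u_\ell)\otimes(\lambda^n/a^n)_0^{(j)}\bigr),
\end{equation*}
using $(A\otimes B)*(u^{\otimes m}\otimes u^{\otimes j}) = (A*u^{\otimes m})(B*u^{\otimes j})$. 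Pairing with $v_{\ell'}$ and summing over $\ell,\ell'$---legitimated by the $\ell^1$-summability \eqref{hypofg} combined with the dual-norm bound on $v_{\ell'}$ and the operator-norm bound on $\Pi_0^{(m)}$---produces \eqref{FFFF1}.

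For \eqref{decorrelation2b} I would further Taylor-expand each $\Phi^{(m+j)}((\ell'-\ell)/\sqrt n)$ around $0$, keeping only the triples $(m,j,r)$ with $(r+m+j)/2 - \lfloor j/P\rfloor \le K$ and discarding the rest as $o(n^{-K-d/2})$. The stronger moment assumption $\sum_\ell |\ell|^{2K}(\|f(\cdot,\ell)\| + \|g(\cdot,\ell)\|_{\mathcal B'}) < \infty$ is exactly what legitimates interchanging this expansion with the $(\ell,\ell')$-sum, while the parity restriction $j+m+r\in 2\ZZ$ is automatic from Gaussianity of $\Phi$, since all odd-order mixed moments of a centered Gaussian vanish, forcing $\Phi^{(s)}(0)=0$ for odd $s$. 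The main technical obstacle is the bookkeeping in the middle step: one must uniformly dominate, in $u\in\RR^d$ and in $\ell,\ell'$, the residual terms arising from multiplying the three expansions of $e^{-\psi(u)}$, $\Pi_{u/\sqrt n}$, and $(\lambda^n/a^n)_{u/\sqrt n}$, so that after integration against $e^{-iu*\ell/\sqrt n}$ and summation the leftover genuinely is $o(n^{-K-d/2})$. Condition \eqref{MAJOM} and the moment hypotheses on $f,g$ are exactly the two ingredients that close this bookkeeping at the required order.
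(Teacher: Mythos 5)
Your proposal is correct and follows essentially the same route as the paper's proof: reduction via the spectral decomposition to the integral over $[-b,b]^d$, the change of variables $t=u/\sqrt n$ with the Gaussian domination from Hypothesis \ref{HHH1}, the two Taylor expansions (of $\Pi$ to order $2K$ and of $\lambda^n/a^n$ to order $M$, with $(\lambda^n/a^n)_0^{(j)}=O(n^{\lfloor j/P\rfloor})$ and \eqref{MAJOM} absorbing the remainder), the Fourier identity producing the $\Phi^{(m+j)}$ factors, and finally the expansion of $\Phi^{(m+j)}$ at $0$ under the stronger moment hypothesis for \eqref{decorrelation2b}. No substantive differences to report.
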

Observe that
$$ (\lambda^n/a^n)^{(j)}_0=\sum_{k_1m_1+...+k_rm_r=j}\frac{n!}{m_1!\cdots m_r!(n-m_1-...-m_r)!}((\lambda/a)_0^{(k_1)})^{m_1}\cdots ((\lambda/a)_0^{(k_r)})^{m_r}\, ,$$
where the sum is taken over $r\ge 1$, $m_1,...,m_r\ge 1$, $k_r>...>k_1\ge P$ (this implies that $m_1+...+m_r\le j/P$). 
Hence $(\lambda^n/a^n)^{(j)}_0$
is polynomial in $n$ with degree at most $\lfloor j/P\rfloor$.
\begin{remark}
Note that \eqref{MAJOM} holds true as soon as $M\ge 2KP/(P-2)$
and $M$ in \eqref{FFFF1} can be replaced by $(2K-m)P/(P-2)$.

Moreover \eqref{decorrelation2b} provides an expansion of the following form:
$$\int_Mf.g\circ T^n\, d\nu=\sum_{m=0}^K\frac{c_m(f,g)}{n^{\frac d2+m}}+o(n^{-K-\frac d2}) \, .
$$
\end{remark}
\begin{remark}\label{DEVASYMP}
If $\Pi$ is $C^M$-smooth, using the fact $(\lambda^n/a^n)^{(j)}_0=O(n^{\lfloor j/P\rfloor})$, if $\sum_{\ell\in\mathbb Z^d}|\ell|^{M}(\Vert f(\cdot,\ell)\Vert+\Vert g(\cdot,\ell)\Vert_{\mathcal B'})<\infty$ the right hand side of \eqref{decorrelation2b} can be rewritten
\[
\frac 1{n^{\frac d2}}
\sum_{\ell,\ell'\in\mathbb Z^d}\sum_{L=0}^{M}\frac 1{n^{L/2}}\frac{\Phi^{(L)}(0)}{L!}
i^L\frac{\partial^L}{\partial t^L}\left(\mathbb E_{\bar\mu}\left[v_{\ell'}. e^{-it*(\ell'-\ell)}.\lambda_{t}^n\Pi_{t}.u_{\ell}\right]e^{\frac{n}2\Sigma^2*t^{\otimes 2}}\right)_{|t=0}
+o(n^{-K-\frac d2})\, .
\]
If moreover $\sup_{u\in [-b,b]^d} \Vert{(R_u^n)^{(m)}}\Vert_{\mathcal(\mathcal B,\mathcal B)} =O( \vartheta^n)$
 for every $m=0,...,M$, then it can also be rewritten
\[
\frac 1{n^{\frac d2}}\sum_{\ell,\ell'\in\mathbb Z^d}\sum_{L=0}^{M}\frac{\Phi^{(L)}(0)}{L!}i^L
\frac{\partial^L}{\partial t^L}\left(\mathbb E_{\bar\mu}\left[u_\ell.e^{it*\frac{S_n-(\ell'-\ell)}{\sqrt n}}.v_{\ell'}\circ\bar T^n\right]e^{\frac{1}2\Sigma^2*t^{\otimes 2}}\right)_{|t=0}
+o(n^{-K-\frac d2})\, ,
\]
where we used \eqref{decomp2}.
\end{remark}
\begin{proof}[Proof of Theorem \ref{THMGENE}]
We assume, up to a change of $b$ that Hypothesis \ref{HHH1} holds true.
Due to \eqref{formulespectrale} and to \eqref{decomp2}, in $\mathcal L(\mathcal B,\mathcal B)$, we have
\begin{eqnarray*}
P^n(\mathbf 1_{S_n=\ell}\cdot)
&=&\frac 1{(2\pi)^d}\int_{[-\pi,\pi]^d}e^{-it*\ell}P_t^n(\cdot)\, dt\\
&=&\frac 1{(2\pi)^d}\int_{[-b,b]^d}e^{-it*\ell}\lambda_t^{n}\Pi_t(\cdot)\, dt
+O(\vartheta^{n})\\
&=&\frac 1{(2\pi)^dn^{\frac{d}2}}\int_{[-b\sqrt{n},b\sqrt{n}]^d}e^{-it*\frac{\ell}{\sqrt{n}}}\lambda_{t/\sqrt{n}}^{n}\Pi_{t/\sqrt{n}}(\cdot)\, dt
+O(\vartheta^{n})\\
&=&\frac 1{(2\pi)^dn^{\frac{d}2}}\int_{[-b\sqrt{n},b\sqrt{n}]^d}e^{-it*\frac{\ell}{\sqrt{n}}}\lambda_{t/\sqrt{n}}^{n} \sum_{m=0}^{2K}\frac 1{m!}\Pi^{(m)}_0(\cdot)*\frac{t^{\otimes m}}{n^{\frac m2}}\, dt
+o(n^{-K-\frac d2})\, ,
\end{eqnarray*}
due to the dominated convergence theorem since there exists $x_{t/\sqrt{n}}\in(0,t/\sqrt{n})$ such that
$\Pi_{t/\sqrt{n}}(\cdot)= \sum_{m=0}^{2K-1}\frac 1{m!}\Pi^{(m)}_0(\cdot)*\frac{t^{\otimes m}}{n^{\frac m2}}+\frac 1{(2K)!}\Pi^{(2K)}_0(x_{t/\sqrt{n}})*\frac{t^{\otimes 2K}}{n^{K}}$.
Recall that $(\lambda^n/a^n)_0^{(j)}=O(n^{\lfloor j/P\rfloor})$, so
\[
\left\vert \lambda_{t/\sqrt{n}}^{n}
   - a_t\sum_{j=0}^{M}\frac 1{j!}(\lambda^n/a^n)_0^{(j)}*\frac{t^{\otimes j}}{n^{\frac j2}} 
       \right\vert\\
\le  n^{\lfloor \frac M P\rfloor}a_t \frac{|t|^{M}}{n^{\frac M2}} \eta(t/\sqrt{n})\, ,
\]
with $\lim_{t\rightarrow 0}\eta(t)=0$ and $\sup_{[-b,b]^d}|\eta|<\infty$.
Due to \eqref{MAJOM}, we obtain
\begin{eqnarray*}
P^n(\mathbf 1_{S_n=\ell}\cdot)&=&\frac 1{(2\pi)^dn^{\frac d2}}\int_{[-b\sqrt{n},b\sqrt{n}]^d}e^{-it*\frac{\ell}{\sqrt{n}}}e^{-\frac 12\Sigma^2* t^{\otimes 2}}
 \sum_{m=0}^{2K}\frac 1{m!}\Pi^{(m)}_0(\cdot)*\frac{t^{\otimes m}}{n^{\frac m2}}\\
&\ &\left(1+\sum_{j=P}^{M}\frac 1{j!}   (\lambda^n/a^n)_0^{(j)}*\frac{t^{\otimes j}}{n^{\frac j2}}\right)\, dt
+ o\left({n^{-K-\frac d2}}\right)\\
&=&\sum_{m=0}^{2K}\sum_{j=0}^{M}\frac{i^{m+j}}{n^{\frac {m+j+d}2}\, m!\, j!}
\Phi^{(m+j)}\left(\frac \ell {\sqrt{n}}\right)*\left(\Pi^{(m)}_0(\cdot)\otimes (\lambda^n/a^n)_0^{(j)}\right)+o(n^{-K-\frac d2}).\label{INT22}
\end{eqnarray*}
This combined with \eqref{FORMULECLEF} and \eqref{hypofg} gives \eqref{FFFF1}.

We assume from now on that 
$\sum_{\ell\in\mathbb Z^d}|\ell|^{2K}(\Vert f(\cdot,\ell)\Vert+\Vert g(\cdot,\ell)\Vert_{\mathcal B'})$. Recall that $(\lambda^n/a^n)^{(j)}_0$ is polynomial in $n$ of degree at most $\lfloor j/P\rfloor$.
Hence, due to the dominated convergence theorem, we can replace
$\Phi^{(m+j)}\left(\frac {\ell'-\ell} {\sqrt{n}}\right)$ in \eqref{FFFF1}
by
$$\sum_{r=0}^{2K-m-j+2\lfloor\frac{j}P\rfloor}\frac 1{r!\, n^{\frac r 2}}\Phi^{(m+j+r)}(0)*(\ell'-\ell)^{\otimes r}\, .$$
Hence we have proved \eqref{decorrelation2b}.
\end{proof}
Now, we come back to the case of $\mathbb Z^2$-periodic Sinai billiards, with the notations of Section \ref{sec:model}.
\section{Young towers for billiards}\label{sec:young}
Recall that, in \cite{young}, Young constructed two dynamical systems $(\tilde M,\tilde T, \tilde\mu)$ and $(\hat M, \hat T, \hat \mu)$ and  two measurable functions $\tilde\pi\colon \tilde M\to \bar M$ and $\hat \pi\colon \tilde M\to \hat M$ such that 
$$\tilde\pi\circ \tilde T=\bar T\circ \tilde\pi,\  \tilde\pi_*\tilde\mu=\bar\mu,\ \hat\pi\circ \tilde T=\hat T\circ \hat\pi,\  \hat\pi_*\tilde\mu=\hat\mu$$
and such that, for every measurable $f\colon \bar M\to\CC$ constant on every stable manifold, there exists $\hat f\colon \hat M\to\CC$ such that $\hat f\circ \hat\pi=f\circ \tilde\pi$.
We consider the partition $\hat{\D}$ on $\hat M$ constructed by Young
in \cite{young} together with the separation time given, for every $x,y$, by
\[
s_0(x,y):=\min\{n\ge -1:\ \hat{\D}(\hat T^{n+1}x)\ne \hat {\D}(\hat T^{n+1}y)\}.
\]
It will be worth noting that, for any $x,y$, the sets $\tilde\pi\hat\pi^{-1}\{x\}$ and $\tilde\pi\hat\pi^{-1}\{y\}$
are contained in the same connected component of $\bar M\setminus\bigcup_{k=0}^{s_0(x,y)}\bar T^{-k}\mathcal S_0$.

Let $p>1$ and set $q$ such that $\frac1p+\frac1q=1$. Let $\eps>0$ and $\beta\in(0,1)$ be suitably chosen and let us define
\[
\|\hat f\| = \sup_{\ell} \|\hat f_{|\hat \Delta_\ell}\|_\infty e^{-\ell\eps}
+
\sup_{\hat A\in\hat{\mathcal D}} \esssup_{x,y\in \hat A} \frac{|\hat f(x)-\hat f(y)|}{\beta^{s_0(x,y)}}e^{-\ell \eps}.
\]
Let $\B:=\{\hat f\in L^q_\CC(\hat M,\hat\mu)\colon \|\hat f\|<\infty\}$.
Young proved that the Banach space $(\B,\|\cdot\|)$ satisfies $\|\cdot\|_{q}\le \|\cdot\|$, that the transfer opertor $\hat P$ on $\mathcal B$
($\hat P$ being defined on $L^q$ as the adjoint of the composition by $\hat T$ on $L^p$) is quasicompact on $\B$. We assume without any loss of generality (up to an adaptation of the construction of the tower) that the dominating eigenvalue of $\hat P$ on $\B$ is $1$ and is simple.

Since $\kappa\colon \bar M\to\ZZ^2$ is constant on the stable manifolds, there exists $\hat \kappa\colon\hat M\to\ZZ^2$ such that $\hat \kappa\circ \hat \pi=\kappa\circ\tilde\pi$. We set $\hat S_n:=\sum_{k=0}^{n-1}\hat\kappa\circ\hat T^k$.
For any $u\in\RR^2$ and $\hat f\in\mathcal B$, we set $\hat P_u(\hat f):=\hat P(e^{i u*\hat\kappa}\hat f)$.

\begin{proposition}\label{pro:pertu2}
$t\mapsto \lambda_t$ is an even function.
\end{proposition}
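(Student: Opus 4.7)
The plan is to deduce the evenness of $\lambda_t$ from the fact that the characteristic function $t\mapsto \mathbb E_{\hat\mu}[e^{it*\hat S_n}]$ is even, via the spectral decomposition of $\hat P_t$. The required symmetry ultimately comes from the standard time-reversal involution of the billiard flow.

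The first step is to introduce the involution $\iota:\bar M\to \bar M$ defined by $\iota(q,\vec v):=(q,-\vec v+2\langle\vec v,\vec n(q)\rangle\vec n(q))$, i.e.\ reflect $\vec v$ to its pre-collision vector and flip its sign. One verifies directly that $\iota^2=\mathrm{id}$, that $\iota_*\bar\mu=\bar\mu$ (since $\langle\vec n(q),\iota(\vec v)\rangle=\langle\vec n(q),\vec v\rangle$, so the density of $\bar\mu$ is preserved), that $\bar T\circ\iota=\iota\circ\bar T^{-1}$ (this is the geometric statement of reversibility for billiards), and that $\kappa\circ\iota=-\kappa\circ\bar T^{-1}$ (the lattice step of the reversed trajectory is the opposite of the previous step of the original trajectory). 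Combining these relations telescopes to
\[
S_n\circ\iota=\sum_{k=0}^{n-1}\kappa\circ\iota\circ\bar T^{-k}=-\sum_{k=0}^{n-1}\kappa\circ\bar T^{-k-1}=-S_n\circ\bar T^{-n}.
\]
Invariance of $\bar\mu$ under $\iota$ and $\bar T$ then yields $\mathbb E_{\bar\mu}[e^{it*S_n}]=\mathbb E_{\bar\mu}[e^{-it*S_n}]$ for every $t\in\mathbb R^2$ and every $n$.

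Next, I lift this to the tower. Because $\hat\kappa\circ\hat\pi=\kappa\circ\tilde\pi$, $\hat T\circ\hat\pi=\hat\pi\circ\tilde T$ and $\bar T\circ\tilde\pi=\tilde\pi\circ\tilde T$, a telescoping computation gives $\hat S_n\circ\hat\pi=S_n\circ\tilde\pi$; since $\hat\pi_*\tilde\mu=\hat\mu$ and $\tilde\pi_*\tilde\mu=\bar\mu$, this produces $\mathbb E_{\hat\mu}[e^{it*\hat S_n}]=\mathbb E_{\bar\mu}[e^{it*S_n}]$, which is therefore even in $t$. To extract $\lambda_t$ I then use the spectral decomposition from Hypothesis \ref{HHH}: writing $\hat P_t^n=\lambda_t^n\hat\Pi_t+\hat R_t^n$ and observing that a direct duality computation gives $\int_{\hat M}\hat P_t^n(\mathbf 1)\,d\hat\mu=\mathbb E_{\hat\mu}[e^{it*\hat S_n}]$, I obtain
\[
\mathbb E_{\hat\mu}[e^{it*\hat S_n}]=\lambda_t^n\, c(t)+O(\vartheta^n),\qquad c(t):=\int_{\hat M}\hat\Pi_t(\mathbf 1)\,d\hat\mu,
\]
with $c$ continuous and $c(0)=1$. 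Shrinking $b$ if necessary so that $|\lambda_t|>\vartheta$ and $c(t)\ne 0$ on $[-b,b]^2$, the ratio of consecutive terms gives
\[
\lambda_t=\lim_{n\to\infty}\frac{\mathbb E_{\hat\mu}[e^{it*\hat S_{n+1}}]}{\mathbb E_{\hat\mu}[e^{it*\hat S_n}]},
\]
and evenness of the right-hand side in $t$ forces $\lambda_{-t}=\lambda_t$.

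The main delicate point is the verification of the two identities $\bar T\circ\iota=\iota\circ\bar T^{-1}$ and $\kappa\circ\iota=-\kappa\circ\bar T^{-1}$. These are standard but need a careful treatment of pre/post-collision conventions and of the fact that the involution interchanges incoming and outgoing vectors. Everything else reduces to the spectral gap assumption together with the unique determination of $\lambda_t$ as the dominant eigenvalue, so no issues with branching of roots or with analytic continuation arise. Note also that working on the tower (rather than directly on $\bar M$, where $\iota$ does not obviously descend after quotienting by stable manifolds) is harmless since only the distribution of $\hat S_n$ is used, and this equals that of $S_n$.
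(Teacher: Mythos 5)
Your proof is correct and follows essentially the same route as the paper: the time-reversal involution (your $\iota$ is the paper's $\Psi$, and your identity $\kappa\circ\bar T^k\circ\iota=-\kappa\circ\bar T^{-k-1}$ is exactly the relation used there) gives $S_n\overset{d}{=}-S_n$, and the spectral decomposition of $\hat P_t$ then forces $\lambda_{-t}=\lambda_t$. Your extraction of $\lambda_t$ as the limit of ratios of consecutive characteristic functions is just a slightly more explicit rendering of the paper's asymptotic equivalence $\lambda_t^n\mathbb E_{\hat\mu}[\Pi_t\mathbf 1]\sim\lambda_{-t}^n\mathbb E_{\hat\mu}[\Pi_{-t}\mathbf 1]$.
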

\begin{proof}
Let $\Psi:\bar M\rightarrow \bar M$ be the map which sends
$(q,\vec v)\in\bar M$  to $(q,\vec v')\in\bar M$ 
such that $\widehat{(\vec n(q),\vec v')}=-\widehat{(\vec n(q),\vec v)}$.
Then $\kappa\circ \bar T^k\circ\Psi=-\kappa\circ \bar T^{-k-1}$.
Hence, $S_n$ as the same distribution (with respect to $\bar\mu$) as $-S_n$ and so
$$\forall t\in[-b,b]^2,\quad \mathbb E_\mu[e^{-it*S_n}]=\mathbb E_\mu[e^{it*S_n}]\sim \lambda_t^n \mathbb E_{\hat\mu}[\Pi_t \mathbf 1]\sim \lambda_{-t}^n \mathbb E_{\hat\mu}[\Pi_{-t} \mathbf 1]$$
as $n$ goes to infinity, and so $\lambda$ is even.
\end{proof}
Let $\mathcal Z_{k}^m$ be the partition of $\bar M\setminus \bigcup_{j=k}^m\bar T^{-j}(\mathcal S_0)$ into its connected components.
We also write $\mathcal Z_k^{\infty}:=\bigvee _{j\ge k}\mathcal Z_k^j$.
\begin{proposition}\label{pro:pertu2b}
Let $k$ be a nonnegative integer and let $u,v:\bar M\rightarrow\mathbb C$ be respectively $\mathcal Z_{-k}^k$-measurable and $Z_{-k}^\infty$-measurable functions.

Then there exists $\hat u,\hat v:\hat M\rightarrow \mathbb C$
such that $u\circ\bar T^k\circ\tilde\pi=\hat u\circ\hat \pi$ and $v\circ\bar T^k\circ\tilde\pi=\hat v\circ\hat \pi$.

Moreover, $\hat u\in\mathcal B$ and for every $t\in\mathbb R$, $ \hat P_t^{2k}(e^{-it*\hat S_k}\hat u)=\hat P^{2k}(e^{it*\hat S_k\circ\hat T^k}\hat u)$
and
\begin{equation}\label{P2ku}
\Vert \hat P^{2k}(e^{it*\hat S_k\circ\hat T^k}\hat u)\Vert\le (1+2\beta^{-1})\Vert u\Vert_\infty\, ,
\end{equation}
and
\begin{equation}\label{EqClef}
\forall n>k,\quad
\mathbb E_{\bar\mu}[u.e^{it*S_n}.v\circ\bar T^n]=
\mathbb E_{\hat\mu}[\hat v.e^{it*\hat S_k}\hat P_t^n(e^{-it*\hat S_k}\hat u)]\, .
\end{equation}
\end{proposition}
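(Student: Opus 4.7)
The plan is to verify the four tasks in order: existence of the lifts $\hat u,\hat v$ with $\hat u\in\mathcal B$; the algebraic identity between $\hat P_t^{2k}$ and $\hat P^{2k}$; the uniform norm bound~\eqref{P2ku}; and the correlation formula~\eqref{EqClef}.

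For the lifts I would use that, since $u$ is $\mathcal Z_{-k}^k$-measurable, $u\circ\bar T^k$ is constant on the connected components of $\bar M\setminus\bigcup_{j=0}^{2k}\bar T^{-j}\mathcal S_0$, and likewise $v\circ\bar T^k$ is constant on the connected components of $\bar M\setminus\bigcup_{j\ge 0}\bar T^{-j}\mathcal S_0$. The observation recalled at the beginning of the section states exactly that $\tilde\pi\hat\pi^{-1}\{y\}$ lies inside a single such component for every $y\in\hat M$, so $u\circ\bar T^k\circ\tilde\pi$ and $v\circ\bar T^k\circ\tilde\pi$ are constant on the fibres of $\hat\pi$ and descend to functions $\hat u,\hat v\colon\hat M\to\CC$. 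The bound $|\hat u|\le\|u\|_\infty$ is immediate, and for the $\beta^{s_0}$-Lipschitz part one notes that $\hat u(x)=\hat u(y)$ whenever $s_0(x,y)\ge 2k$, so the Lipschitz seminorm is bounded (with a $k$-dependent constant of order $\beta^{-2k}$); combined with the tower weight $e^{-\ell\eps}$ this gives $\hat u\in\mathcal B$.

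The algebraic identity is then immediate from iterating $\hat P_t=\hat P(e^{it*\hat\kappa}\cdot)$ into $\hat P_t^{2k}(\cdot)=\hat P^{2k}(e^{it*\hat S_{2k}}\cdot)$ and using $\hat S_{2k}-\hat S_k=\hat S_k\circ\hat T^k$. The core work is then the $k$-uniform bound~\eqref{P2ku}: set $w=e^{it*\hat S_k\circ\hat T^k}\hat u$, which is bounded by $\|u\|_\infty$ and constant on the atoms of $\bigvee_{j=0}^{2k}\hat T^{-j}\hat{\D}$. For any two points $x,y$ in a common atom of $\hat{\D}$, the Markov structure of $\hat T$ produces a canonical pairing of their $\hat T^{2k}$-preimages $y'_x,y'_y$ with $s_0(y'_x,y'_y)\ge 2k+s_0(x,y)$, so $w(y'_x)=w(y'_y)$; the resulting expression for $\hat P^{2k}w(x)-\hat P^{2k}w(y)$ contains only Jacobian differences, which Young's distortion estimate controls by a multiple of $\beta^{s_0(x,y)}\|w\|_\infty$. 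Together with $\|\hat P^{2k}w\|_\infty\le\|u\|_\infty$ and the tower weight, this yields the constant $1+2\beta^{-1}$. This is the technical heart of the proposition, and the hard point will be to arrange the bookkeeping so that the gain $\beta^{2k}$ from $s_0(y'_x,y'_y)\ge 2k$ exactly cancels the loss $\beta^{-2k}$ hidden in the Lipschitz norm of $\hat u$, leaving a constant independent of $k$.

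Finally, for~\eqref{EqClef}, I would apply $\bar T^k$-invariance of $\bar\mu$ together with $S_n\circ\bar T^k=S_{n+k}-S_k$ to rewrite
\[
\mathbb E_{\bar\mu}[u\cdot e^{it*S_n}\cdot v\circ\bar T^n]=\mathbb E_{\bar\mu}\bigl[(u\circ\bar T^k)\cdot e^{it*(S_{n+k}-S_k)}\cdot (v\circ\bar T^k)\circ\bar T^n\bigr],
\]
lift via $\tilde\pi_*\tilde\mu=\bar\mu$, $\hat\pi_*\tilde\mu=\hat\mu$ and the intertwinings $\tilde\pi\circ\tilde T=\bar T\circ\tilde\pi$, $\hat\pi\circ\tilde T=\hat T\circ\hat\pi$ to obtain $\mathbb E_{\hat\mu}[\hat u\cdot e^{it*(\hat S_{n+k}-\hat S_k)}\cdot \hat v\circ\hat T^n]$, split $\hat S_{n+k}=\hat S_n+\hat S_k\circ\hat T^n$, and apply the duality $\mathbb E_{\hat\mu}[f\cdot g\circ\hat T^n]=\mathbb E_{\hat\mu}[\hat P^n f\cdot g]$ with $g=e^{it*\hat S_k}\hat v$, absorbing the factor $e^{it*\hat S_n}$ into $\hat P_t^n$, to recover the right-hand side of~\eqref{EqClef}.
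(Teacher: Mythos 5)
Your proposal is correct, and for the identity \eqref{EqClef} it follows essentially the same route as the paper: shift by $\bar T^k$ using invariance of $\bar\mu$, lift through $\tilde\pi$ and $\hat\pi$, split the Birkhoff sum, and use the duality $\mathbb E_{\hat\mu}[f\cdot g\circ\hat T^n]=\mathbb E_{\hat\mu}[\hat P^nf\cdot g]$ together with $\hat P_t^n=\hat P^n(e^{it*\hat S_n}\cdot)$; the paper organizes this as one application of $\hat P^{n+k}$ peeled into $\hat P^k\circ\hat P^n$, whereas you apply $\hat P^n$ directly after the shift, which is the same computation. The genuine difference is in coverage: the paper's written proof establishes only \eqref{EqClef}, taking the existence of the lifts from the preliminary remarks on Young's construction and asserting the bound \eqref{P2ku} without argument, while you sketch the distortion proof of \eqref{P2ku}. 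Your sketch identifies the correct mechanism — $e^{it*\hat S_k\circ\hat T^k}\hat u$ is constant on atoms of $\bigvee_{j=0}^{2k}\hat T^{-j}\hat{\D}$, paired $\hat T^{2k}$-preimages of points with $s_0(x,y)\ge 0$ satisfy $s_0\ge 2k$ and hence give equal values of the integrand, so only Jacobian differences survive and Young's distortion estimate yields a $k$-uniform H\"older constant — and this cancellation of $\beta^{2k}$ against the $\beta^{-2k}$ hidden in $\|\hat u\|$ is exactly why the constant in \eqref{P2ku} does not depend on $k$. The only caveat is that this part remains a plan rather than a complete proof (the precise value $1+2\beta^{-1}$ depends on the normalization of the distortion constant in Young's construction), but the structure of the argument is sound and it supplies a step the paper itself leaves implicit.
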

\begin{proof}
Using several times $\hat P^m(f.g\circ T^m)=g.\hat P^mf$ and $\hat P_t^m=\hat P^m(e^{it\hat S_m}\cdot)$, we obtain
\begin{eqnarray*}
\mathbb E_{\bar\mu}[u.e^{it*S_n}.v\circ\bar T^n]&=&
   \mathbb E_{\bar\mu}[u\circ \bar T^k.e^{it*S_n}\circ \bar T^k.v\circ\bar T^{n+k}]\\
&=&\mathbb E_{\hat\mu}[\hat u. e^{it* \hat S_n}\circ \hat T^k.\hat v\circ\hat T^n]\\
&=&\mathbb E_{\hat\mu}[\hat P^{n+k}(\hat u. e^{it* (\hat S_{n-k}\circ \hat T^k +\hat S_k\circ \hat T^n)}.\hat v\circ\hat T^n)]\\
&=&\mathbb E_{\hat\mu}[\hat P^{k}(e^{it*\hat S_k}\hat v.\hat P^{n}( e^{it* (\hat S_{n-k}\circ \hat T^k)}.\hat u))]\\
&=&\mathbb E_{\hat\mu}[\hat P^{k}(e^{it*\hat S_k}\hat v.\hat P_t^{n}(e^{-it*\hat S_k}\hat u))]\, ,
\end{eqnarray*}
since $\hat S_{n-k}\circ\hat T^k=\hat S_n-\hat S_k$.
Hence, we have proved \eqref{EqClef} (since $\hat P$ preserves $\hat\mu$).
\end{proof}

\section{Proofs of our main results in the finite horizon case}\label{sec:MAIN}
We assume throughout this section that the billiard has finite horizon.

The Nagaev-Guivarc'h method \cite{nag1,nag2,GH}
has been applied in this context by Sz\'asz and Varj\'u \cite{SV1} (see also \cite{FP09a}) to prove Hypotheses \ref{HHH} and 
\ref{HHH1} hold for $\mathcal B_0=\mathcal B$ the Young Banach space.
More precisely, we have the following.
\begin{proposition}[\cite{SV1,FP09a}]\label{pro:pertu}
There exist a real $b\in(0,\pi)$ and three $C^\infty$ functions $t\mapsto \lambda_t$, $t\mapsto \Pi_t$ and $t\mapsto N_t$ defined on $[-b,b]^2$ and with values in $\mathbb C$, $\mathcal L(\mathcal B,\mathcal B)$ and $\mathcal L(\mathcal B,\mathcal B)$ respectively such that
\begin{itemize}
\item[(i)] for every
$ t\in[-b,b]^2$,  
$\hat P_t^n=\lambda_t^n \Pi_t + N_t^n$and $\Pi_0=\mathbb E_{\hat\mu}[\cdot]$, $\Pi_t\hat P_t=\hat P_t\Pi_t=\lambda_t\Pi_t$, $\Pi_t^2=\Pi_t$;
\item[(ii)]
there exists $\vartheta\in(0,1)$ such that, for every positive integer $m$,
\[
\sup_{t\in[-b,b]^2} \| (N^n)^{(m)}_t\|_{\mathcal L(\mathcal B,\mathcal B)} = O(\vartheta^n)  
\quad\text{and}\quad
\sup_{t\in[-\pi,\pi]^2\setminus[-b,b]^2} \| \hat P_t^n\|_{\mathcal L(\mathcal B,\mathcal B)} = O(\vartheta^n);
\]
\item[(iii)]
we have $\lambda_t = 1-\frac12\Sigma^2*t^{\otimes 2} = O(|t|^3)$;
\item[(iv)] there exists $\sigma>0$ such that,
for any $t\in[-b,b]^2$, 
$|\lambda_t|\le e^{-\sigma|t|^2} $ and
$e^{-\frac 12\Sigma^2*t^{\otimes 2}}\le e^{-\sigma|t|^2} $.
\end{itemize}
\end{proposition}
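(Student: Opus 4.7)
The plan is to apply the Nagaev--Guivarc'h perturbation method to the family $t\mapsto \hat P_t=\hat P(e^{it*\hat\kappa}\cdot)$ acting on Young's Banach space $(\mathcal B,\Vert\cdot\Vert)$, starting from the already-established spectral picture of $\hat P_0=\hat P$. First I would verify that $t\mapsto \hat P_t$ is a $C^\infty$ map from $[-\pi,\pi]^2$ into $\mathcal L(\mathcal B,\mathcal B)$. Writing $\hat P_t^{(m)}= \hat P(i^m\hat\kappa^{\otimes m}e^{it*\hat\kappa}\cdot)$, this reduces to checking that multiplication by $\hat\kappa^{\otimes m}e^{it*\hat\kappa}$ is bounded on $\mathcal B$, uniformly in $t$. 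In the finite horizon case $\kappa$ is bounded on $\bar M$, so the lift $\hat\kappa$ on the tower has exponential moments with respect to the height parameter $\ell$ encoded in the norm $\Vert\cdot\Vert$ (choosing $\eps$ small enough against the return-time tail); combined with the fact that $\hat\kappa$ is constant on the partition elements defining the Lipschitz seminorm, this gives the required operator-smoothness.

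Next I would invoke analytic/$C^\infty$ perturbation theory of quasicompact operators (Kato, or Hennion--Hervé). Since $1$ is a simple isolated eigenvalue of $\hat P_0$ with eigenprojection $\mathbb E_{\hat\mu}[\cdot]\mathbf{1}$ and the rest of the spectrum lies in a disk of radius $<1$, for $b$ small enough and $t\in[-b,b]^2$ there exists a unique eigenvalue $\lambda_t$ of $\hat P_t$ near $1$ with corresponding rank-one spectral projection $\Pi_t$, both $C^\infty$ in $t$; setting $N_t:=\hat P_t-\lambda_t\Pi_t$ produces the decomposition (i) and a uniform spectral gap $\Vert N_t\Vert_{\mathcal L(\mathcal B,\mathcal B)}\le\vartheta_0<1$. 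The estimate on the derivatives $(N_t^n)^{(m)}$ in (ii) follows by inductively differentiating $N_t^n=N_t\cdots N_t$ and using the gap, since each differentiation contributes a bounded factor $\hat P_t^{(k)}$ on $\mathcal B$.

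The main obstacle is the second part of (ii): the uniform decay of $\Vert \hat P_t^n\Vert$ on $[-\pi,\pi]^2\setminus[-b,b]^2$. By quasicompactness of each $\hat P_t$ on $\mathcal B$ (via a Doeblin--Fortet/Lasota--Yorke inequality inherited from $\hat P$ and the fact that $|e^{it*\hat\kappa}|=1$), this reduces to proving that $\hat P_t$ has no eigenvalue of modulus $1$ for $t\in(-\pi,\pi]^2\setminus\{0\}$. A solution $\hat P_t f=e^{i\theta}f$ with $|f|\equiv 1$ would give a measurable coboundary equation $t*\hat\kappa=\theta+g-g\circ\hat T\pmod{2\pi}$, hence the same for $\kappa$ on $\bar M$; one rules this out by exhibiting enough periodic orbits of $\bar T$ whose $\kappa$-sums generate $\mathbb Z^2$ (standard for the finite-horizon Sinai billiard, using short periodic trajectories between pairs of scatterers). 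Combined with upper semicontinuity of the spectrum, this yields $r(\hat P_t)<1$ off a neighborhood of $0$, and a compactness argument upgrades this to a uniform bound $\Vert\hat P_t^n\Vert=O(\vartheta^n)$; Keller--Liverani stability could be used to make this quantitative.

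Finally, for (iii) I would start from $\lambda_t=\mathbb E_{\hat\mu}[\hat P_t(\Pi_t\mathbf 1)]/\mathbb E_{\hat\mu}[\Pi_t\mathbf 1]$ and the identity $\mathbb E_{\bar\mu}[e^{it*S_n}]=\lambda_t^n\mathbb E_{\hat\mu}[\Pi_t\mathbf 1]+O(\vartheta^n)$. Proposition \ref{pro:pertu2} gives $\lambda_{-t}=\lambda_t$, so $\lambda^{(1)}_0=0$ and all odd-order derivatives vanish; thus $\lambda_t=1+\frac12\lambda^{(2)}_0*t^{\otimes 2}+O(|t|^4)$. Taking $t/\sqrt n$ and passing to the limit, the central limit theorem for $S_n/\sqrt n$ established in \cite{BS81,BCS91,young} forces $\lambda^{(2)}_0=-\Sigma^2$, proving (iii). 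For (iv), non-degeneracy of $\Sigma^2$ yields $\sigma>0$ with $e^{-\frac12\Sigma^2*t^{\otimes 2}}\le e^{-\sigma|t|^2}$; using $\lambda_t=1-\frac12\Sigma^2*t^{\otimes 2}+O(|t|^3)$ and shrinking $b$ if necessary, $|\lambda_t|\le e^{-\sigma|t|^2}$ on $[-b,b]^2$ by elementary estimation.
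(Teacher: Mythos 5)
Your proposal is correct and follows essentially the same route as the paper's source for this statement: the paper does not reprove Proposition \ref{pro:pertu} but cites \cite{SV1,FP09a}, where exactly this Nagaev--Guivarc'h scheme is carried out on Young's tower ($C^\infty$ dependence of $\hat P_t$ using boundedness of $\hat\kappa$ in finite horizon, perturbation of the simple isolated eigenvalue $1$, non-arithmeticity of $\kappa$ via periodic orbits to control $t$ away from $0$, and identification of $\lambda_0^{(2)}=-\Sigma^2$). The only cosmetic caveat is that your appeal to Proposition \ref{pro:pertu2} for the vanishing of odd derivatives must come after the spectral decomposition (i)--(ii) is in place, which is consistent with how the paper itself orders these facts.
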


%\section{Billiards in finite horizon: High order local limit theorems}\label{sec:TLL}

Our first step consists in stating a high order expansion of the following quantity
$$\mathbb E_{\bar\mu}[u.\mathbf 1_{S_n=\ell}.v\circ\bar T^n]$$
for $u$ and $v$ dynamically Lispchitz on $\bar M$.
Let us recall that, due to \eqref{FORMULECLEF0}, this result corresponds to a mixing result for observables supported on a single cell.
We start by studying this quantity
for some locally constant observables. This result is a refinement of
\cite[prop. 4.1]{ps10} (see also \cite[prop 3.1]{FP17}.
Let $\Phi$ be the density function of $B$, which is given by
$\Phi(x)=\frac {e^{-\frac {(\Sigma^2)^{-1}*x^{\otimes 2}}2}}{2\pi\, \sqrt{\det\Sigma^2}}$.
\subsection{A first local limit theorem}
We set $a_t:= e^{-\frac{1}2\Sigma^2*t^{\otimes 2}}$.
Note that the uneven derivatives of $\lambda/a$ at 0 are null
as well as its three first derivatives.
\begin{proposition}\label{TLL}
Let $K$ be a positive integer and a real number $p>1$. There exists $c>0$ such that, 
for any $k\ge 1$, if $u,v:\bar M\rightarrow \mathbb C$ are respectively $\Z_{-k}^{k}$-measurable and $\Z_{-k}^\infty$-measurable, 
then for any $n > 3k$ and $\ell\in\ZZ^2$
\begin{eqnarray}
&\ &\left|
\mathbb E_{\bar\mu}\left[u\mathbf 1_{\{S_n=\ell\}}.v\circ \bar T^{n}\right]
-\sum_{m=0}^{2K-2}\frac {1}{m!}\sum_{j=0}^{2K-2-m}
\frac {i^{m+2j}}{(2j)!}\frac{\Phi^{(m+2j)}\left(\frac \ell{\sqrt{n}}\right)}{n^{j+1+\frac{m}2}}*(A_m(u,v)\otimes(\lambda^n/a^n)_0^{(2j)})
\right|\nonumber\\
&\ &
\le \frac{ck^{2K-1}\Vert v\Vert_p\, \Vert u\Vert_\infty}{n^{K+\frac14}}\, , \label{formuleTLL}
\end{eqnarray}
with, for every $m\in\{0,...,4K-4\}$,
\begin{equation}\label{Am}
\left| A_m(u,v)-
\frac{\partial^m}{\partial t^m}\left(\frac{\mathbb E_{\bar\mu}[u.e^{it*S_n}.v\circ \bar T^n]}{\lambda_t^n}\right)_{|t=0}
    \right|\le cn^m\vartheta^{n-2k}\Vert v\Vert_p\Vert u\Vert_\infty\, ,
\end{equation}
\begin{equation}\label{MAJO}
\left| A_m(u,v)\right|\le c\, k^m\Vert v\Vert_p\Vert u\Vert_\infty\quad\mbox{and}\quad(\lambda^n/a^n)_0^{(m)}=O(n^{m/4}).
\end{equation}
In particular, for $K=2$, we obtain
\begin{eqnarray*}
&\ &\left|
\mathbb E_{\bar\mu}\left[u\mathbf 1_{\{S_n=\ell\}}.v\circ \bar T^{n}\right]
-\frac{\Phi(\frac\ell{\sqrt{n}})}{n}A_0(u,v)
- \frac i{n^{\frac 32}}\Phi'(\frac\ell{\sqrt{n}}) * A_1(u,v)\right.\\
&\ &\left. + \frac 1{n^2}\Phi"(\frac \ell{\sqrt{n}}) * A_2(u,v)
-\frac {A_0(u,v)}{ n^2}
.\Phi^{(4)}\left(\frac \ell{\sqrt{n}}\right)*(\lambda^n/a^n)^{(4)}_0\right|\\
&\ &
\le \frac{ck^3\Vert v\Vert_p\, \Vert u\Vert_\infty}{n^\frac94}\, .
\end{eqnarray*}
\end{proposition}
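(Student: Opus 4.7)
The plan is to combine the Fourier inversion of $\mathbf{1}_{\{S_n=\ell\}}$ with the quotient-tower identity \eqref{EqClef} from Proposition \ref{pro:pertu2b}, and then to unfold the spectral decomposition of Proposition \ref{pro:pertu} in a neighbourhood of the origin. Explicitly, I start from
\[
\mathbb E_{\bar\mu}\!\left[u\,\mathbf{1}_{\{S_n=\ell\}}v\circ\bar T^n\right]=\frac{1}{(2\pi)^2}\int_{[-\pi,\pi]^2}e^{-it*\ell}\,\mathbb E_{\hat\mu}\!\left[\hat v\,e^{it*\hat S_k}\hat P_t^n(e^{-it*\hat S_k}\hat u)\right]dt
\]
and discard the integral outside $[-b,b]^2$ using Proposition \ref{pro:pertu}(ii). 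Since $e^{-it*\hat S_k}\hat u$ need not lie in $\mathcal B$, I peel off $2k$ iterates and rewrite $\hat P_t^n(e^{-it*\hat S_k}\hat u)=\hat P_t^{n-2k}(w_{t,k,\hat u})$ with $w_{t,k,\hat u}:=\hat P^{2k}(e^{it*\hat S_k\circ\hat T^k}\hat u)$, which is controlled in $\mathcal B$ by \eqref{P2ku} uniformly in $t$ and $k$. Applying $\hat P_t^{n-2k}=\lambda_t^{n-2k}\Pi_t+N_t^{n-2k}$ and discarding the $O(\vartheta^n)$ contribution of $N_t^{n-2k}$, the problem reduces to analysing
\[
\frac{1}{(2\pi)^2}\int_{[-b,b]^2}e^{-it*\ell}\,\lambda_t^{n-2k}\,G_t\,dt,\qquad G_t:=\mathbb E_{\hat\mu}\!\left[\hat v\,e^{it*\hat S_k}\Pi_t(w_{t,k,\hat u})\right].
\]

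The second step is to rescale $t=s/\sqrt n$. Using $a_{s/\sqrt n}^n=a_s$, one has $\lambda_{s/\sqrt n}^{n-2k}=a_s\,(\lambda^n/a^n)_{s/\sqrt n}\,\lambda_{s/\sqrt n}^{-2k}$; Proposition \ref{pro:pertu2} together with Proposition \ref{pro:pertu}(iii) gives that $\lambda/a$ is even with $(\lambda/a)^{(j)}_0=0$ for $j\le 3$, so that by Fa\`a di Bruno $(\lambda^n/a^n)^{(m)}_0=0$ for odd $m$ and $(\lambda^n/a^n)^{(2j)}_0=O(n^{j/2})$. Taylor expanding $\lambda_{s/\sqrt n}^{-2k}G_{s/\sqrt n}$ in $s/\sqrt n$ to order $2K-2$ and $(\lambda^n/a^n)_{s/\sqrt n}$ to a matching order, then integrating term by term against the Gaussian kernel via
\[
\int_{\mathbb R^2}s^{\otimes r}\,a_s\,e^{-is*x}\,ds=(2\pi)^2\,i^r\,\Phi^{(r)}(x),
\]
produces exactly the $\Phi^{(m+2j)}(\ell/\sqrt n)$ terms of \eqref{formuleTLL}. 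The coefficient $A_m(u,v)$ is then forced to be the $m$-th derivative at $t=0$ of $\lambda_t^{-2k}G_t$; \eqref{Am} follows since this quantity differs from $\mathbb E_{\bar\mu}[u\,e^{it*S_n}v\circ\bar T^n]/\lambda_t^n$ only by the exponentially small $N_t^{n-2k}/\lambda_t^n$ contribution, and the bound $|A_m|\le c\,k^m\|v\|_p\|u\|_\infty$ of \eqref{MAJO} comes from Leibniz combined with the pointwise bound $|\hat S_k|\le k\,\|\kappa\|_\infty$ on each differentiation of the factors $e^{\pm it*\hat S_k}$, while the smoothness of $\Pi_t$ and the uniform bound \eqref{P2ku} control the remaining terms.

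The main obstacle is the bookkeeping of remainders, where two competing scales must be balanced: a Taylor remainder of order $2K-1$ in $s/\sqrt n$ carries the factor $k^{2K-1}n^{-(2K-1)/2}$, whereas the polynomial growth $(\lambda^n/a^n)^{(2j)}_0=O(n^{j/2})$ can inflate individual terms. The analogue of \eqref{MAJOM} with $P=4$, namely $M/2-\lfloor M/4\rfloor\ge K$, dictates the Taylor order at which these two scales are matched, and yields the stated error $c\,k^{2K-1}n^{-K-1/4}$. A minor technical preliminary is to restrict the $s$-integration to a window of size $O(\sqrt{\log n})$, the tail being Gaussian-small by Proposition \ref{pro:pertu}(iv), so that the Taylor expansions are genuinely valid throughout the range of integration. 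The explicit $K=2$ formula is then a direct specialisation.
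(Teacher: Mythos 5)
Your proposal is correct and follows essentially the same route as the paper: the identity \eqref{EqClef} plus Fourier inversion, peeling off $2k$ iterates so that \eqref{P2ku} controls $\hat P^{2k}(e^{it*\hat S_k\circ\hat T^k}\hat u)$ in $\mathcal B$, the spectral splitting $\hat P_t^{n-2k}=\lambda_t^{n-2k}\Pi_t+N_t^{n-2k}$, the definition of $A_m$ as the $m$-th derivative at $0$ of $t\mapsto\lambda_t^{-2k}\mathbb E_{\hat\mu}[\hat v\,e^{it*\hat S_k}\Pi_t\hat P_t^{2k}(e^{-it*\hat S_k}\hat u)]$, and the two-stage Taylor expansion (of this quantity to order $2K-2$ and of $\lambda^n/a^n$ using $(\lambda/a)^{(j)}_0=0$ for $j\le 3$) integrated against the Gaussian kernel, with the same $n^{-K-1/4}$ balance. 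The only cosmetic difference is your restriction of the rescaled integration variable to a window of size $O(\sqrt{\log n})$, which the paper avoids by simply bounding the Taylor remainders against the Gaussian weight over all of $[-b,b]^2$.
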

\begin{remark}\label{FORMULETLL2}
Due to \eqref{Am} and \eqref{MAJO}, \eqref{formuleTLL} can be rewritten
as follows:
\begin{eqnarray*}
&\ &\left|
\mathbb E_{\bar\mu}\left[u\mathbf 1_{\{S_n=\ell\}}.v\circ \bar T^{n}\right]
-\sum_{m=0}^{4K-4}\frac {i^m}{m!} \frac{\Phi^{(m)}(\frac\ell{\sqrt{n}})}{n^{1+\frac m2}}*\left(e^{\frac n2\Sigma^2*t^{\otimes 2}}\mathbb E_{\bar\mu}[u.e^{it*S_n}.v\circ \bar T^n]\right)^{(m)}_{|t=0}\right|\\
&\ &
\le \frac{ck^{4K-4}\Vert v\Vert_p\, \Vert u\Vert_\infty}{n^{K+\frac14}}\, .
\end{eqnarray*}
\end{remark}
\begin{proof}[Proof of Proposition \ref{TLL}]
Since 
$u\circ \bar T^k$ is $\Z_{0}^{2k}$-measurable and $v\circ\bar T^{k}$ is $\Z_0^\infty$-measurable, there exist $\hat u,\hat v:\hat M\rightarrow\mathbb C$ such that
$\hat u\circ\hat\pi = u\circ\bar T^k\circ \tilde\pi$ 
and $\hat v\circ\hat\pi = v\circ\bar T^k\circ\tilde\pi$, with $\hat u\in\mathcal B$.
As in the proof of \cite[Prop. 4.1]{ps10}, we set 
\[
C_n(u,v,\ell):=\mathbb E_{\bar\mu}[u.\mathbf 1_{\{S_n=\ell\}}. v\circ \bar T^n]\, .
\]
Due to \eqref{EqClef}, we obtain
\begin{eqnarray}
C_n(u,v,\ell)&=&\frac 1{(2\pi)^2}\int_{[-\pi,\pi]^2}e^{-it*\ell}\mathbb E_{\bar\mu}[u. e^{it *S_n}.v\circ\bar T^n]\, dt\nonumber\\
&=&\frac 1{(2\pi)^2}\int_{[-\pi,\pi]^2}e^{-it*\ell}\mathbb E_{\hat\mu}[e^{it *\hat S_k}\hat v.\hat P_t^{n}(e^{-it*\hat S_k}\hat u)]\, dt
%\\&=&\frac 1{(2\pi)^2}\int_{[-\pi,\pi]^2}e^{-it*\ell}\mathbb E_{\hat\mu}[e^{it *\hat S_k}\hat v.P_t^{n-2k}P^{2k}(e^{it*\hat S_k\circ \hat T^k}\hat u)]\, dt
\, .
\end{eqnarray}
Let $\Xi_{k,t}:=e^{it*\hat S_k}\Pi_t(e^{-it*\hat S_k}\cdot)$.
We will write $\Xi^{(m)}_{k,0}$ for $\frac{\partial^m}{\partial t^m}(\Xi_{k,t})_{|t=0}$.
Due to items (i) and (ii) of
Proposition~\ref{pro:pertu} and due to \eqref{P2ku}, it comes
\begin{eqnarray}
C_n(u,v,\ell)&=&\frac 1{(2\pi)^2}\int_{[-b,b]^2}e^{-it*\ell}\lambda_t^{n-2k}
\mathbb E_{\hat\mu}[e^{it *\hat S_k}\hat v.\Pi_t\hat P_t^{2k}(e^{-it*\hat S_k}\hat u)]\, dt+O(\vartheta^{n-2k}\Vert  u\Vert_\infty.\Vert v\Vert_p )\nonumber\\
&=&\frac 1{(2\pi)^2}\int_{[-b,b]^2}e^{-it*\ell}\lambda_t^{n}\mathbb E_{\hat\mu}[\hat v.{\Xi}_{k,t}\hat u]\, dt+O(\vartheta^{n-2k}\Vert u\Vert_\infty.\Vert v\Vert_p )\, ,
\end{eqnarray}
since $\Pi_t \hat P_t=\lambda_t\Pi_t$ and $\Pi_t^2=\Pi_t$ so that 
\begin{equation}\label{formuleXi}
\Xi_{k,t}=\lambda_t^{-2k}e^{it*\hat S_k}\Pi_t\hat P_t^{2k}(e^{-it*\hat S_k}\cdot)\, .
\end{equation}
Observe that
\begin{equation}\label{MAJOO}
\frac{1}{(2\pi)^2}\int_{[-b,b]^2}|t|^{j}|\lambda_t|^{n}\, dt\le 
\frac{1}{(2\pi)^2 n^{\frac {j+2}2}}\int_{[-b\sqrt{n},b\sqrt{n}]^2}|t|^{j}e^{-\sigma |t|^2}\, dt\, ,
\end{equation}
 and so
\begin{equation}
C_n(u,v,\ell)=\frac 1{(2\pi)^2}\int_{[-b,b]^2}e^{-it*\ell}\lambda_t^{n} \sum_{m=0}^{2K-2}\frac 1{m!}A_m(u,v)*t^{\otimes m}\, dt+ O\left(\frac{k^{2K-1}\Vert v\Vert_p\, \Vert  u\Vert_\infty}
{n^{K+\frac 12}}\right)\, ,
\end{equation}
with $A_m(u,v):=\mathbb E_{\hat\mu}[\hat v.\Xi^{(m)}_{k,0}\hat u]$.
Indeed $\Xi^{(2K-1)}_{k,s}\hat u$
is a linear combination of terms of the form
$$e^{is*\hat S_k}.(i\hat S_k)^{\otimes a}\otimes \Pi^{(b)}_s\hat P^{2k}(\otimes (i\hat S_k\circ\hat T^k)^{\otimes c}e^{is*\hat S_k\circ\hat T^k}\hat u)\otimes(\lambda^{-2k})^{(d)}_s$$
over nonnegative integers $a,b,c,d$ such that $a+b+c+d=2K-1$,
and these terms are in $O(k^{2K-1}\Vert u\Vert_\infty)$ in $\mathcal B$, uniformly in $k$.
Moreover, due to \eqref{formuleXi}, to \eqref{EqClef} and to Item (i) of Proposition \ref{pro:pertu},  we obtain
\begin{eqnarray*}
\forall t\in[-b,b]^2,\quad
\mathbb E_{\hat\mu}[\hat v.\Xi_{k,t}.\hat u]&=&\frac{\lambda_t^{n-2k}\mathbb E_{\hat\nu}[\hat v.e^{it*\hat S_k}\Pi_t\hat P_t^{2k}(e^{-it*\hat S_k}\hat u)]}{\lambda_t^n}\\
&=&
\frac{\mathbb E_{\bar\mu}[u.e^{it* S_n}. v\circ\bar T^n]-\mathbb E_{\hat\mu}[e^{it*\hat S_k}\hat v.N_{t}^{n-2k}\hat P_t^{2k}(e^{-it*\hat S_k}\hat u)]}{\lambda_t^n}\, ,
\end{eqnarray*}
so that
\[
\mathbb E_{\hat\mu}[\hat v.\Xi_{k,0}^{(m)}.\hat u]
=\left(\frac{\mathbb E_{\bar\mu}[u.e^{it*S_n}.v\circ \bar T^n]}{\lambda_t^n}\right)^{(m)}_{|t=0}
+O\left(n^m\vartheta^{n-2k}\Vert v\Vert_p\Vert u\Vert_\infty\right)\, .
\]
Recall that $a_t= e^{-\frac{1}2\Sigma^2*t^{\otimes 2}}$.
Since the three first derivatives of $\lambda$ and $a$ coincide, we have $(\lambda^n/a^n)_0^{(j)}=O(n^{j/4})$ and
\[
\left\vert \lambda_t^{n}
   - a_t^{n}\sum_{j=0}^{4K-4-2m}\frac 1{j!}(\lambda^n/a^n)_0^{(j)}*t^{\otimes j} 
       \right\vert\\
\le c_K n^{\frac{4K-3-2m}4}a_t^n |t|^{4K-3-2m}.
\]
Due to the analogue of \eqref{MAJOO} with $\lambda_t$ replaced by $a_t$, we obtain
\begin{eqnarray*}C_n(u,v,\ell)&=&\frac 1{(2\pi)^2}\int_{[-b,b]^2}e^{-it*\ell}e^{-\frac{n}2\Sigma^2* t^{\otimes 2}}
 \sum_{m=0}^{2K-2}\frac 1{m!}A_m(u,v)*t^{\otimes m}\\
&\ &\left(1+\sum_{j=4}^{4K-4-2m}\frac 1{j!}   (\lambda^n/a^n)_0^{(j)}*t^{\otimes j}\right)\, dt
+ O\left(\frac{k^{2K-1}\Vert v\Vert_p\, \Vert u\Vert_\infty}
{n^{K+\frac14}}\right).
\end{eqnarray*}

Note that
\begin{eqnarray}
&\ &\frac 1{(2\pi)^2}\int_{[-b,b]^2}e^{-it*\ell}e^{-\frac{n}2\Sigma^2* t^{\otimes 2}}t^{\otimes m}\, dt\nonumber\\
&=&\frac 1{(2\pi)^2\, n^{\frac m2+1}}\int_{[-b\sqrt{n},b\sqrt{n}]^2}e^{-it*\frac\ell{\sqrt{n}}}e^{-\frac{1}2\Sigma^2* t^{\otimes 2}}t^{\otimes m}\, dt\nonumber\\
&=&\frac {i^m}{n^{\frac m2+1}}
\Phi^{(m)}\left(\frac \ell {\sqrt{n}}\right)+o(n^{-K-\frac 14}).\label{INT}
\end{eqnarray}
Hence we have proved that
\begin{eqnarray*}
&\ &\left|
\mathbb E_{\bar\mu}\left[u\mathbf 1_{\{S_n=\ell\}}.v\circ \bar T^{n}\right]
-\sum_{m=0}^{2K-2}\frac{i^m}{m!}\frac{\Phi^{(m)}(\frac\ell{\sqrt{n}})}{n^{1+\frac m2}}*A_m(u,v)\right. \\
&\ &\left.-\sum_{m=0}^{2K-2} \sum_{j=4}^{4K-4-2m}\frac{i ^{m+j}}{m!\, j!\, n^{1+\frac{m+j}2}}\Phi^{(m+j)}\left(\frac \ell{\sqrt{n}}\right)*\left(
 A_m(u,v)\otimes (\lambda^n/a^n)_0^{(j)}\right)\right|\\
&\ &
\le \frac{ck^{2K-1}\Vert v\Vert_p\, \Vert u\Vert_\infty}{n^{K+\frac14}}\, ,
\end{eqnarray*}
and so \eqref{formuleTLL} using \eqref{INT} and the fact that
the uneven derivatives of $(\lambda/a)$ at 0 are null.
\end{proof}
\subsection{Generalization}

\begin{proposition}\label{TLL2}
Let $K$ be a positive integer.
Let $\xi_0\in\left(\max(\xi,\vartheta),1\right)$. There exists
$\mathfrak c_0>0$ such that, 
for every $u,v:\bar M\rightarrow \mathbb C$ 
dynamically Lipschitz continuous functions, with respect to $d_\xi$ with $\xi\in(0,1)$ and for every $\ell\in\ZZ^2$
\begin{eqnarray}
&\ &\left|
\mathbb E_{\bar\mu}\left[u\mathbf 1_{\{S_n=\ell\}}.v\circ \bar T^{n}\right]
-\sum_{m=0}^{2K-2}\frac 1{m!}\sum_{j=0}^{2K-2-m}
\frac {i^{m+2j}}{(2j)!}\frac{\Phi^{(m+2j)}\left(\frac \ell{\sqrt{n}}\right)}{n^{j+1+\frac{m}2}}*(A_{m}(u,v)\otimes(\lambda^n/a^n)_0^{(2j)})
\right|\nonumber\\
&\ &
\ \le\mathfrak c_0\frac{(\log n)^{4K-2}}{n^{K+\frac 14}}{\Vert v\Vert_{(\xi)}\, \Vert u\Vert_{(\xi)}}\, ,\label{eq:TLLGENE}
\end{eqnarray}
with $A_m(u,v)$ such that
\begin{equation}\label{Amlim}
\left|A_{m}(u,v)-\left(\mathbb E_{\bar\mu}[u.e^{it*S_n}.v\circ \bar T^n]/\lambda_t^n\right)^{(m)}_{|t=0}\right|
\le \mathfrak c_0\Vert u\Vert_{(\xi)}\, \Vert u\Vert_{(\xi)}\xi_0^{(\log n)^2}
\end{equation}
and $|A_m(u,v)|\le \mathfrak c_0\Vert u\Vert_{(\xi)}\Vert v\Vert_{(\xi)}$.
\end{proposition}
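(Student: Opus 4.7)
The plan is to deduce Proposition \ref{TLL2} from Proposition \ref{TLL} by approximating the dynamically Lipschitz observables $u,v$ by locally constant ones, choosing the approximation scale to depend on $n$ in just the right way.

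First I would fix, for each $n$, the cut-off $k=k_n:=\lfloor c(\log n)^2\rfloor$, where $c>0$ is large enough that $\xi^k\le\xi_0^{(\log n)^2}$, $\vartheta^{n-2k}$ is still exponentially small in $n$, and $\xi^k/n\ll n^{-K-1/4}$. Then I would approximate $u$ and $v$ by $\mathcal Z_{-k}^k$-measurable functions $u_k,v_k$ (fixing a representative value on each element of $\mathcal Z_{-k}^k$); since $s(x,y)\ge k$ for $x,y$ in the same cell, the dynamical Lipschitz hypothesis gives $\|u-u_k\|_\infty\le L_\xi(u)\xi^k$ and similarly for $v$. Note $\mathcal Z_{-k}^k$-measurable is a fortiori $\mathcal Z_{-k}^\infty$-measurable, so $v_k$ fits the framework of Proposition \ref{TLL}.

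Next, decomposing
\[
\mathbb E_{\bar\mu}[u\mathbf 1_{\{S_n=\ell\}} v\circ\bar T^n]=\mathbb E_{\bar\mu}[u_k\mathbf 1_{\{S_n=\ell\}} v_k\circ\bar T^n]+R_n(\ell),
\]
I would bound $|R_n(\ell)|\le \bar\mu(S_n=\ell)\bigl(\|u-u_k\|_\infty\|v\|_\infty+\|u_k\|_\infty\|v-v_k\|_\infty\bigr)\le C\xi^k\|u\|_{(\xi)}\|v\|_{(\xi)}/n$ using the classical LLT $\bar\mu(S_n=\ell)=O(1/n)$ that follows from Proposition \ref{pro:pertu}. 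By the choice of $k$ this is much smaller than $n^{-K-1/4}$. Applying Proposition \ref{TLL} to the main term yields the asserted expansion with the residual $ck^{2K-1}\|v_k\|_p\|u_k\|_\infty/n^{K+1/4}=O((\log n)^{4K-2}/n^{K+1/4})$ since $k^{2K-1}\asymp(\log n)^{4K-2}$; this is exactly where the exponent $4K-2$ on the logarithm in \eqref{eq:TLLGENE} comes from.

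Finally I would define $A_m(u,v)$ for general $u,v$ by passing to the limit. For the locally constant approximations, write $A_m(u_k,v_k)-A_m(u_{k'},v_{k'})=A_m(u_k-u_{k'},v_{k'})+A_m(u_k,v_k-v_{k'})$ for $k'>k$; since $u_k-u_{k'}$ and $v_k-v_{k'}$ are $\mathcal Z_{-k'}^{k'}$-measurable with sup norm $\le L_\xi \xi^k$, the bound \eqref{MAJO} gives $|A_m(u_k,v_k)-A_m(u_{k'},v_{k'})|\le c(k')^m\xi^k\|u\|_{(\xi)}\|v\|_{(\xi)}$, which is a Cauchy estimate. Define $A_m(u,v):=\lim_{k\to\infty}A_m(u_k,v_k)$, so that $|A_m(u,v)-A_m(u_k,v_k)|\le c'\xi_0^{(\log n)^2}\|u\|_{(\xi)}\|v\|_{(\xi)}$. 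Combining with the original \eqref{Am} and the crude bound $|(\mathbb E_{\bar\mu}[(u-u_k)e^{itS_n}v\circ\bar T^n])^{(m)}_{|t=0}|\le c n^m\xi^k$ gives \eqref{Amlim}. The main obstacle will be tracking all the error constants cleanly, particularly keeping the $L^p$ norm $\|v_k\|_p$ that appears in Proposition \ref{TLL} bounded in terms of $\|v\|_{(\xi)}$, and making sure the Cauchy argument for $A_m$ is quantitative enough to yield the super-polynomial rate $\xi_0^{(\log n)^2}$.
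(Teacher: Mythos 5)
Your proposal is correct and follows essentially the same route as the paper: approximate $u,v$ by $\mathcal Z_{-k}^{k}$-measurable functions at scale $k_n\asymp(\log n)^2$, apply Proposition \ref{TLL} (whence the $(\log n)^{4K-2}=k_n^{2K-1}$ loss), and obtain $A_m(u,v)$ by a Cauchy/limiting argument based on \eqref{Am} and \eqref{MAJO}; the paper merely runs the Cauchy argument on $n\mapsto A_{m,n}(u,v)$ over dyadic scales rather than on $k\mapsto A_m(u_k,v_k)$. Just note that your increment bound $c(k')^m\xi^k$ should be summed over consecutive $k'$ (it is not uniform in arbitrary $k'>k$), which is exactly the telescoping the paper also performs.
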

\begin{proof}
For every positive integer $k$, we define 
$$u_{k}:= \mathbb E_{\bar\mu}[u|\mathcal Z_{-k}^k]\quad\mbox{and}\quad
 v_{k}:= \mathbb E[v|\mathcal Z_{-k}^k]\, .$$
Note that
$$\Vert u-u_{k}\Vert_\infty\le   L_\xi (u)\xi^{k},\quad \Vert v-v_{k}\Vert_\infty\le   L_\xi (v)\xi^{k}
     \, ,$$
and
\[
\left|\mathbb E_{\bar\mu}\left[u\mathbf 1_{\{S_n=\ell\}}.v\circ \bar T^{n}\right]-\mathbb E_{\bar\mu}\left[u_{k}\mathbf 1_{\{S_n=\ell\}}.v_{k}\circ \bar T^{n}\right]\right|\le \Vert u\Vert_{(\xi)} \, \Vert v\Vert_{(\xi)}\xi^k\, .
\]
Now we take $k=k_n=\lceil (\log n)^2\rceil$.
Note that, for $n$ large enough, $n>3k_n$.
We set 
$$ A_{m,n}(u,v):=\left(\mathbb E_{\bar\mu}[u.e^{it*S_n}.v\circ \bar T^n]/\lambda_t^n\right)^{(m)}_{|t=0}\, .$$
Note that, for every integers $k,n>0$,
\begin{eqnarray*}
\left|A_{m,n}(u,v)-A_{m,n}(u_{k},v_{k})\right\Vert
&\le& \left\Vert \frac{\partial^m}{\partial t^m}\left(\frac {e^{it*S_n}}{\lambda_t^n}\right)_{|t=0}\right\Vert_{L^1(\bar\mu)}\Vert u\Vert_{(\xi)}\Vert v\Vert_{(\xi)}\xi^{k}\\
&\le& \tilde c_mn^m\Vert u\Vert_{(\xi)}\Vert v\Vert_{(\xi)}\xi^{k}\, .
\end{eqnarray*}
For every integers $n,n'$ such that $0<n\le n'\le 2n$, we have
\begin{eqnarray*}
&\ &\left|A_{m,n}(u,v)-A_{m,n'}(u,v)\right|\\
&\le&\left|A_{m,n}(u_{k_n},v_{k_n})-A_{m,n'}(u_{k_{n}},v_{k_{n}})\right|+(1+2^{m})\tilde c_mn^m\Vert u\Vert_{(\xi)}\Vert v\Vert_{(\xi)}\xi^{k_n} \\
&\le&K_m\Vert u\Vert_{(\xi)}\Vert v\Vert_{(\xi)}\xi_0^{k_n} \, ,
\end{eqnarray*}
due to \eqref{Am}.
Hence, we conclude that $(A_{m,n}(u,v))_n$ is a Cauchy sequence so that $A_m(u,v)$ is well defined and that
$$|A_{m}(u,v)-A_{m,n}(u,v)|\le K_m\Vert u\Vert_{(\xi)}\Vert v\Vert_{(\xi)}\sum_{j\ge 0}\xi_0^{k_{2^jn}}=O\left(\Vert u\Vert_{(\xi)}\, \Vert u\Vert_{(\xi)}\xi_0^{k_n}\right).$$
Since 
Applying Proposition \ref{TLL} to the couple $(u_{(k_{n})},v_{(k_{n})})$ leads to \eqref{eq:TLLGENE}.
\end{proof}
\subsection{Proofs of our main results}
\begin{theorem}\label{MAIN}
Let $f,g:M\rightarrow\mathbb R$ be two bounded observables
such that
\[
\sum_{\ell\in\mathbb Z^2}\left(\Vert f
      \mathbf 1_{\mathcal C_\ell}\Vert_{(\xi)}+\Vert g
      \mathbf 1_{\mathcal C_\ell}\Vert_{(\xi)}\right)<\infty\, .
\]
Then
\begin{eqnarray}
&\ &\int_Mf.g\circ T^n\, d\nu\nonumber\\
&=&\sum_{m=0}^{2K-2}\frac 1{m!}\sum_{j=0}^{2K-2-m}
\frac {i^{m+2j}}{(2j)!}\frac{\sum_{\ell,\ell'\in\mathbb Z^2}\Phi^{(m+2j)}\left(\frac {\ell'-\ell}{\sqrt{n}}\right)*(A_{m}(u_\ell,v_{\ell'}))}{n^{j+1+\frac{m}2}}*(\lambda^n/a^n)_0^{(2j)})
+o(n^{-K})\, ,\label{decorrelation1}
\end{eqnarray}
with $u_{\ell}(q,\vec v)=f(q+\ell,\vec v)$ and $v_{\ell}(q,\vec v)=f(q+\ell,\vec v)$ and with $A_m(u,v)$ given by \eqref{Amlim}.

If moreover, $\sum_{\ell\in\mathbb Z^2}|\ell|^{2K-2}(\Vert f\mathbf 1_{\mathcal C_\ell}\Vert_{(\xi)}+\Vert g\mathbf 1_{\mathcal C_\ell}\Vert_{(\xi)})<\infty$, then
\begin{eqnarray}
\int_Mf.g\circ T^n\, d\nu
=\sum_{L=0}^{K-1}\frac{\tilde c_L}{n^{1+L}}
 \sum_{j=0}^{2K-2-2L}(-1)^j
\frac {\Phi^{(2j+2L)}(0)}{(2j)!n^j}*(\lambda^n/a^n)_0^{(2j)}+o(n^{-K})\, \label{decorrelation2}
\end{eqnarray}
with
\begin{eqnarray*}
\tilde c_L(f,g)&:=&\sum_{r,m\ge 0\ :\ r+m=2L}\frac {i^m}{m!\, r!}\sum_{\ell,\ell'\in\mathbb Z^2}(\ell'-\ell)^{\otimes r}\otimes A_{m}(u_\ell,v_{\ell'})\, .
%&=&\sum_{r_1,r_2,m\ge 0\ :\ r_1+r_2+m=2L}\frac {i^m}{m!\, r_1!\, r_2!}A_{m}\left(\sum_{\ell\in\mathbb Z^2}(-\ell)^{\otimes r_1}u_\ell,\sum_{\ell'\in\mathbb Z^2}(\ell')^{\otimes r_2}v_{\ell'}\right)\, .
\end{eqnarray*}
\end{theorem}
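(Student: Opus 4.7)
The proof reduces Theorem \ref{MAIN} to a careful bookkeeping around the refined local limit theorem of Proposition \ref{TLL2}. The starting point is the cell decomposition inherited from the skew-product structure: by \eqref{FORMULECLEF0} applied to the $\mathbb Z^2$-extension $(M,\nu,T)$ of $(\bar M,\bar\mu,\bar T)$ by $\kappa$,
\[
\int_M f \cdot g\circ T^n\, d\nu \;=\; \sum_{\ell,\ell'\in\mathbb Z^2}\mathbb E_{\bar\mu}\!\left[u_\ell\, \mathbf 1_{\{S_n=\ell'-\ell\}}\, v_{\ell'}\circ \bar T^n\right],
\]
where $u_\ell$ and $v_{\ell'}$ are the dynamically Lipschitz translates of $f$ and $g$ to the zero-cell, with $\|u_\ell\|_{(\xi)}=\|f\mathbf 1_{\mathcal C_\ell}\|_{(\xi)}$ and analogously for $v_{\ell'}$. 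This turns the infinite measure correlation into a double sum of single-cell local limit theorems.

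Applying Proposition \ref{TLL2} to each inner expectation (with the lattice point there taken to be $\ell'-\ell$), and summing over $(\ell,\ell')$, immediately produces the main term appearing in \eqref{decorrelation1}. The aggregate error is bounded by
\[
\mathfrak c_0\,\frac{(\log n)^{4K-2}}{n^{K+1/4}}\!\left(\sum_{\ell}\|u_\ell\|_{(\xi)}\right)\!\left(\sum_{\ell'}\|v_{\ell'}\|_{(\xi)}\right)\!=o(n^{-K}),
\]
by the summability assumption, proving \eqref{decorrelation1}. Absolute convergence of the main sum for each fixed $(m,j,n)$ follows from the uniform bound $|A_m(u_\ell,v_{\ell'})|\le \mathfrak c_0\|u_\ell\|_{(\xi)}\|v_{\ell'}\|_{(\xi)}$ and the boundedness of the derivatives of $\Phi$.

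For \eqref{decorrelation2} we exploit the strengthened moment hypothesis to Taylor-expand each Gaussian derivative around the origin:
\[
\Phi^{(m+2j)}\!\left(\tfrac{\ell'-\ell}{\sqrt n}\right)=\sum_{r=0}^{R(m,j)-1}\frac{1}{r!\,n^{r/2}}\,\Phi^{(m+2j+r)}(0)*(\ell'-\ell)^{\otimes r}\;+\;\mathrm{rem}_{m,j}(\ell,\ell',n),
\]
with $R(m,j)$ chosen so that the remainder, when multiplied by the prefactor $(\lambda^n/a^n)_0^{(2j)}/n^{j+1+m/2}=O(n^{-1-m/2-j/2})$ and summed over $(\ell,\ell')$, is $o(n^{-K})$. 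The moment bound $\sum_\ell |\ell|^{2K-2}\|u_\ell\|_{(\xi)}<\infty$, combined with the elementary inequality $|\ell'-\ell|^r\le 2^r(|\ell|^r+|\ell'|^r)$, guarantees absolute convergence of $\sum_{\ell,\ell'}(\ell'-\ell)^{\otimes r}\otimes A_m(u_\ell,v_{\ell'})$ for every $r\le 2K-2$, which is precisely the range appearing in $\tilde c_L(f,g)$. Regrouping the triple sum in $(m,j,r)$ by the integer $L=(m+r)/2$ and using $i^{2j}=(-1)^j$ collects the $(m,r)$-terms into the coefficient $\tilde c_L(f,g)$ while leaving the $j$-sum with the kernel $(-1)^j\Phi^{(2L+2j)}(0)(\lambda^n/a^n)_0^{(2j)}/((2j)!\,n^j)$, yielding exactly \eqref{decorrelation2}.

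The main technical obstacle is the Taylor remainder at the critical index $(m,j,r)=(0,0,2K-2)$, where the available moment is exactly sharp. This is handled by splitting the sum over $(\ell,\ell')$ according to whether $|\ell'-\ell|\le \sqrt n$ or not: in the inner regime the Schwartz character of $\Phi$ gives a clean $O(|\ell'-\ell|^{2K-1}/n^{K+1/2})$ bound on the remainder, while the tail regime is controlled by the rapid decay of $\Phi^{(m+2j)}(x)$ as $|x|\to\infty$; the extra factor $n^{-1/4}$ in the error of Proposition \ref{TLL2} absorbs the critical half-moment discrepancy, delivering the announced $o(n^{-K})$ remainder.
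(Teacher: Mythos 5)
Your proposal is correct and follows essentially the same route as the paper: the cell decomposition \eqref{FORMULECLEF0}, termwise application of Proposition \ref{TLL2} with the error summed against $\sum_\ell\Vert f\mathbf 1_{\mathcal C_\ell}\Vert_{(\xi)}\sum_{\ell'}\Vert g\mathbf 1_{\mathcal C_{\ell'}}\Vert_{(\xi)}$, then a Taylor expansion of $\Phi^{(m+2j)}$ at $0$ up to order $2K-2-m-j$ and a regrouping in $L=(m+r)/2$ using the vanishing of the odd derivatives of $\Phi$ at $0$. The only difference is cosmetic: where you split the $(\ell,\ell')$-sum at $|\ell'-\ell|\le\sqrt n$ to control the Taylor remainder at the critical index (and your invocation of the $n^{-1/4}$ from Proposition \ref{TLL2} there is not really what saves you), the paper simply applies dominated convergence to the remainder weighted by $|\ell'-\ell|^{2K-2}\Vert u_\ell\Vert_{(\xi)}\Vert v_{\ell'}\Vert_{(\xi)}$, which yields the needed $o(n^{-K})$ rather than just $O(n^{-K})$.
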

Since $(\lambda^n/a^n)_0^{(2j)}=O(n^{j/2})$, we conclude that:
\begin{remark}\label{RQE}
Assume $\sum_{\ell\in\mathbb Z^2}|\ell|^{2K-2}(\Vert f\mathbf 1_{\mathcal C_\ell}\Vert_{(\xi)}+\Vert g\mathbf 1_{\mathcal C_\ell}\Vert_{(\xi)})<\infty$ and $\int_Mf.g\, d\nu=O(n^{-K})$.
Then 
$$\int_Mf.g\, d\nu=\frac{\Phi^{(2K-2)}(0)*\tilde c_{K-1}(f,g)}{n^{K}}+o(n^{-K}) \, ,$$
and $\tilde c_{K-1}(f,g)=\lim_{n\rightarrow +\infty}\frac {(-1)^{K-1}}{(2K-2)!}\sum_{\ell,\ell'\in\mathbb Z^2}\mathbb E_{\bar\mu}\left[u_\ell.\frac{\partial^{2K-2}}{\partial t^{2K-2}}\left(\lambda_t^{-n}e^{it*(S_n-(\ell'-\ell))}\right)_{|t=0}.
v_{\ell'}\circ\bar T^n\right]$.
\end{remark}
\begin{corollary}\label{coroMAIN}
Under the assumptions of Theorem \ref{MAIN} ensuring \eqref{decorrelation2}, using the fact that
$(\lambda/a)_0^{(2j)}=O(n^{j/2})$, as in Remarks \ref{DEVASYMP} and \ref{FORMULETLL2}, 
if $\sum_{\ell\in\mathbb Z^2}|\ell|^{4K-4}(\Vert f\mathbf 1_{\mathcal C_\ell}\Vert_{(\xi)}+\Vert g\mathbf 1_{\mathcal C_\ell}\Vert_{(\xi)})<\infty$,
the right hand side of \eqref{decorrelation2} can be rewritten
\[
n^{-\frac d2}
\sum_{\ell,\ell'\in\mathbb Z^2}\sum_{L=0}^{4K-4}\frac{\Phi^{(L)}(0)}{L!}
i^L\frac{\partial^L}{\partial t^L}\left(\mathbb E_{\bar\mu}\left[u_\ell. e^{it*\frac{(S_n-(\ell'-\ell))}{\sqrt{n}}}.v_{\ell'}\circ\bar T^n\right]e^{\frac{1}2\Sigma^2*t^{\otimes 2}}\right)_{|t=0}
+o(n^{-K})\, .
\]
\end{corollary}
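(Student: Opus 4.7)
The corollary is a purely algebraic rewriting, modulo $o(n^{-K})$, of the expansion \eqref{decorrelation2}. My plan is to recognize both the right hand side of \eqref{decorrelation2} and the compact expression in the corollary as two distinct groupings of the same triple sum indexed by $(r,m,j)$, where $r$ records the Taylor order of $\Phi$ at $0$, $m$ is the order in $A_m(u_\ell,v_{\ell'})$, and $2j$ is the order of the derivative of $(\lambda/a)^n$.

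Starting from the compact expression in the corollary, I would factor the quantity inside the $L$-th derivative as $e^{-it*(\ell'-\ell)/\sqrt n}\cdot B_{n,\ell,\ell'}(t)$, where $B_{n,\ell,\ell'}(t):=\mathbb E_{\bar\mu}[u_\ell\,e^{it*S_n/\sqrt n}\,v_{\ell'}\circ\bar T^n]\,e^{\frac12\Sigma^2*t^{\otimes 2}}=G_{n,\ell,\ell'}(t/\sqrt n)$ with $G_{n,\ell,\ell'}(s):=e^{\frac n2\Sigma^2*s^{\otimes 2}}\mathbb E_{\bar\mu}[u_\ell\,e^{is*S_n}\,v_{\ell'}\circ\bar T^n]$. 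Applying Leibniz to $\partial_t^L\bigl(e^{-it*(\ell'-\ell)/\sqrt n}\,G_{n,\ell,\ell'}(t/\sqrt n)\bigr)_{|t=0}$ and using the identity $i^{L}(-i)^{a}=i^{L-a}$, the compact expression equals
\[
\frac{1}{n}\sum_{\ell,\ell'\in\mathbb Z^2}\sum_{a+b\le 4K-4}\frac{i^{b}\,\Phi^{(a+b)}(0)}{a!\,b!\,n^{(a+b)/2}}*\bigl((\ell'-\ell)^{\otimes a}\otimes G_{n,\ell,\ell'}^{(b)}(0)\bigr).
\]

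Next, from the spectral factorisation $G_{n,\ell,\ell'}(t)=(\lambda_t/a_t)^n\cdot\bigl[\mathbb E_{\bar\mu}[u_\ell e^{it*S_n}v_{\ell'}\circ\bar T^n]/\lambda_t^n\bigr]$ and a second application of Leibniz at $t=0$, combined with \eqref{Amlim} (replacing the $m$-th derivative of the bracket by $A_m(u_\ell,v_{\ell'})$ up to an exponentially small error) and the evenness of $\lambda/a$ (Proposition \ref{pro:pertu2}, which kills odd derivatives of $(\lambda^n/a^n)$), I obtain
\[
G_{n,\ell,\ell'}^{(b)}(0)=\sum_{2j+m=b}\binom{b}{2j}(\lambda^n/a^n)_{0}^{(2j)}\,A_m(u_\ell,v_{\ell'})+O\!\bigl(n^{b}\vartheta^{n}\Vert u_\ell\Vert_{(\xi)}\Vert v_{\ell'}\Vert_{(\xi)}\bigr).
\]
Substituting this expansion into the previous display and using $i^{2j+m}=(-1)^{j}i^{m}$, together with the vanishing of $\Phi^{(a+2j+m)}(0)$ for odd $a+m$ (which forces $a+m=:2L$), produces exactly the triple sum obtained by expanding $\tilde c_L(f,g)$ in \eqref{decorrelation2}. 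Term-by-term matching of $(a,m,j)\leftrightarrow(r,m,j)$ with $1+L+j=1+(a+2j+m)/2$ then concludes the identification.

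The main obstacle will be the error bookkeeping rather than the combinatorics. Two truncations must be shown to agree modulo $o(n^{-K})$: the cutoff $a+b\le 4K-4$ above, and the cutoffs $2L\le 2K-2$, $2j\le 2K-2-2L$ in \eqref{decorrelation2}. Using the growth $(\lambda^n/a^n)_{0}^{(2j)}=O(n^{j/2})$ (so $G_{n,\ell,\ell'}^{(b)}(0)=O(n^{\lfloor b/4\rfloor}\Vert u_\ell\Vert_{(\xi)}\Vert v_{\ell'}\Vert_{(\xi)})$) together with the moment hypothesis $\sum_{\ell}|\ell|^{4K-4}\Vert f\mathbf 1_{\mathcal C_\ell}\Vert_{(\xi)}<\infty$ (and similarly for $g$), a direct accounting shows the tail contributions are bounded by $n^{-K-1/2}$, hence negligible. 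Combining the two reductions yields the corollary.
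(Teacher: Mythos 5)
Your proposal is correct and follows essentially the same route the paper indicates: both sides are recognized as regroupings of the triple sum in $(r,m,2j)$ obtained from the spectral factorisation $\mathbb E_{\bar\mu}[u_\ell e^{it*S_n}v_{\ell'}\circ\bar T^n]=\lambda_t^n\cdot(\cdots)$, the Leibniz rule, the vanishing of the odd derivatives of $\lambda/a$ and of $\Phi$ at $0$, and the degree bound on $(\lambda^n/a^n)_0^{(2j)}$ as a polynomial in $n$ — exactly the manipulations of Remarks \ref{DEVASYMP} and \ref{FORMULETLL2} to which the paper appeals. The error bookkeeping you describe does close (the terms present in the compact form but absent from \eqref{decorrelation2} satisfy $2L+j\ge 2K-1$, hence are $O(n^{-1-L-\lceil j/2\rceil})=O(n^{-K-1})$), so there is no gap.
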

\begin{proof}[Proof of Theorem \ref{MAIN}]
We have
$$\int_M f.g\circ T^n \, d\nu=\sum_{\ell,\ell'\in\mathbb Z^2}
   \mathbb E_{\bar \mu}[u_\ell \mathbf 1_{\{S_n=\ell'-\ell\}}v_{\ell'}\circ \bar T^n].$$
Hence, \eqref{decorrelation1} follows directly from Proposition \ref{TLL2}.
Due to the dominated convergence theorem,
\begin{eqnarray*}
&\ &\lim_{n\rightarrow+\infty}n^{K-1-\frac{m+j}2}\sum_{\ell,\ell'\in\mathbb Z^2}\left(\Phi^{(m+j)}\left(\frac{\ell'-\ell}{\sqrt{n}}\right)-\sum_{r=0}^{2K-2-m-(j/2)}\frac {\Phi^{(m+j+r)}(0)}{r!}*\left(\frac{\ell'-\ell}{\sqrt{n}}\right)^{\otimes r}\right)\\
&\ &\ \ \ \ \ \ \ *\left((\lambda^n/a^n)_0^{(j)}\otimes A_{m}(u_\ell,v_{\ell'})\right)=0\, ,
\end{eqnarray*}
(where we used \eqref{MAJO}) and to the fact that the uneven derivatives of $\Phi$ are null and that $\Phi^{(2k)}(0)=(-\Sigma^2)^{\otimes k}\Phi(0)$. Therefore
\begin{eqnarray*}
\int_Mf.g\circ T^n\, d\nu
&=&\sum_{m=0}^{2K-2}\sum_{r=0:r+m\in 2\mathbb Z}^{2K-2-m}\frac {\Phi(0)}{m!\, r!}\sum_{j=0}^{2K-2-m-r}
\frac {(-1)^{j}}{(2j)!}\left(\frac{(-\Sigma^{-2})^{\otimes (j+\frac{m+r}2)}}{n^{j+1+\frac{m+r}2}}*(\lambda^n/a^n)_0^{(2j)}\right)\nonumber\\
&\ &*\sum_{\ell,\ell'\in\mathbb Z^2}i^m(\ell'-\ell)^{\otimes r}\otimes A_{m}(u_\ell,v_{\ell'})
+o(n^{-K})\, ,
\end{eqnarray*}
which ends the proof of \eqref{decorrelation2}.
\end{proof}
\begin{proof}[Proof of Theorem \ref{PRINCIPAL}]
This comes from \eqref{decorrelation2} combined with the fact that
$(\lambda^n/a^n)_0^{(2j)}$ is a polynomial in $n$ of degree
bounded by $j/2$. 
\end{proof}
\begin{proof}[Proof of Theorem \ref{MAINbis}]
Due to \eqref{decorrelation2} of Theorem \ref{MAIN}, we obtain 
\eqref{MAIN2} with
$$ \tilde{\mathfrak A}_2(f,g)=\mathfrak  a_{2,0,0}(f,g)+\mathfrak  a_{0,2,0}(f,g)+\mathfrak  a_{1,1,0}(f,g)\, ,
$$
where $a_{m,r,j}(f,g)$ corresponds to the contribution of the $(m,r,j)$-term in the sum of the right hand side of \eqref{decorrelation2}. Moreover, due to Proposition \ref{Amn!!},
\begin{eqnarray*}
\mathfrak  a_{2,0,0}(f,g)&= &\sum_{\ell,\ell'\in\mathbb Z^2}A_2(u_\ell,v_{\ell'})\\
&=&-\lim_{n\rightarrow+\infty}\left\{   \int_Mf\, d\nu\, \sum_{j,m= -n}^{-1}
\int_M g.(\kappa\circ T^j\otimes \kappa\circ T^m-\mathbb E_{\bar\mu}[\kappa\circ \bar T^j\otimes \kappa\circ\bar T^m])]\, d\nu\right.\\
&\ &+\int_M g\, d\nu\, 
\sum_{j,m= 0}^{n-1}\int_M f .[\kappa\circ T^j\otimes\kappa\circ T^m
    -\mathbb E_{\bar\mu}[\kappa\circ T^j\otimes\kappa\circ T^m]]\, d\nu\nonumber\\
&\ & +2\sum_{r= 0}^{n-1}\int_M f.\kappa \circ T^r\, d\nu
     \otimes\sum_{m= -n}^{-1}\int_M g.\kappa\circ T^m\, d\nu\nonumber\\
&\ &   \left. +\int_Mf\, d\nu\, \int_Mg\, d\nu (\mathbb E_{\bar\mu}[S_n^{\otimes 2}]-n\Sigma^2)\right\}\, ,
\end{eqnarray*}
\[
\mathfrak  a_{0,2,0}(f,g) =-\sum_{\ell,\ell'\in\mathbb Z^2}A_0(u_\ell,v_{\ell'}).(\ell'-\ell)^{\otimes 2}=-
\sum_{\ell,\ell'\in\mathbb Z^2}(\ell'-\ell)^{\otimes 2}\int_{\mathcal C_\ell}f\, d\nu\, \int_{\mathcal C_{\ell'}}g\, d\nu\, ,
\]
\begin{eqnarray*}
\mathfrak  a_{1,1,0}(f,g)&=&-2i\sum_{\ell,\ell'\in\mathbb Z^2}A_1(u_\ell,v_{\ell'})\otimes(\ell'-\ell)\\
&= &2\lim_{n\rightarrow +\infty}\left\{\sum_{\ell,\ell'\in\mathbb Z^2}\int_{\mathcal C_{\ell'}}g\, d\nu\sum_{r= 0}^{n-1}\int_{\mathcal C_\ell}f.((\ell'-\ell)\otimes \kappa\circ T^r)\, d\nu\right.\\
&\ &\left.+\sum_{\ell,\ell'\in\mathbb Z^2}\int_{\mathcal C_{\ell}}f\, d\nu\sum_{m=-n}^{-1}\int_{\mathcal C_{\ell'}}g.((\ell'-\ell)\otimes \kappa\circ T^m)\, d\nu\right\}\, .
\end{eqnarray*}
For the contribution of the term with $(m,r,j)=(0,0,2)$, note that
$$(\lambda^n/a^n)^{(4)}_{0}=n(\lambda/a)_0^{(4)}=n(\lambda_0^{(4)}-3(\Sigma^2)^{\otimes 2}).$$
Moreover, due to Proposition \ref{lambda04},
\[
\lambda_0^{(4)}-3(\Sigma^2)^{\otimes 2}=
\lim_{n\rightarrow +\infty}\frac{\mathbb E_{\bar\mu}[S_n^{\otimes 4}]-3n^2(\Sigma^2)^{\otimes 2}}{n}+6\Sigma^2\otimes B_0=\Lambda_4\, .
\]
Note that
\begin{eqnarray*}
\mathfrak  a_{2,0,0}(f,g)&= &-\lim_{n\rightarrow+\infty}\left\{   \int_Mf\, d\nu\, \int_M g((\mathcal I_0-\mathcal I_{-n})^{\otimes 2}-\mathbb E_{\bar\mu}[S_n^{\otimes 2}])\, d\nu\right.\\
&\ &+\int_M g\, d\nu\, 
\int_M f((\mathcal I_n-\mathcal I_{0})^{\otimes 2}-\mathbb E_{\bar\mu}[S_n^{\otimes 2}])\, d\nu\nonumber\\
&\ & +2\int_Mf(\mathcal I_n-\mathcal I_0)\, d\nu
     \otimes\int_Mg(\mathcal I_0-\mathcal I_{-n})\, d\nu\nonumber\\
&\ &   \left. -\int_Mf\, d\nu\, \int_Mg\, d\nu \, \mathfrak B_0\right\}\, ,
\end{eqnarray*}
\begin{eqnarray*}
\mathfrak  a_{0,2,0}(f,g) &=&
-\int_M f.\mathcal I_0^{\otimes 2}\, d\nu\, \int_Mg\, d\nu-
\int_Mf\, d\nu\, \int_M g.\mathcal I_0^{\otimes 2}\, d\nu+2
\int_Mf\mathcal I_0\, d\nu\otimes\int_Mg\mathcal I_0\, d\nu
\end{eqnarray*}
and
\begin{eqnarray*}
\mathfrak  a_{1,1,0}(f,g)
&= &\lim_{n\rightarrow +\infty}\left\{2\int_M g\mathcal I_0\, d\nu\otimes 
\int_M f(\mathcal I_n-\mathcal I_0)\, d\nu-2\int_M g\, d\nu\, 
\int_M f.\mathcal I_0\otimes(\mathcal I_n-\mathcal I_0)\, d\nu\right.\\
&\ &\left.+2\int_Mf\, d\nu\, \int_Mg.\mathcal I_0\otimes(\mathcal I_0-\mathcal I_{-n})\, d\nu-2\int_Mf\mathcal I_0\, d\nu\otimes
    \int_Mg(\mathcal I_0-\mathcal I_{-n})\, d\nu\right\}\, .
\end{eqnarray*}
Hence we have proved \eqref{MAIN2} with
\[
\tilde {\mathfrak A_2}(f,g):= 
-\int_Mf\, d\nu \, \tilde{\mathfrak B}_2^-(g)-\int_Mg\, d\nu\, \tilde{\mathfrak B}_2^+(f)+\int_Mf\, d\nu\int_Mg\, d\nu\, {\mathfrak B}_0+2\, {\mathfrak B}_1^+(f)\otimes{\mathfrak B}_1^-(g)\, ,
\]
with
\begin{eqnarray*}
\tilde{\mathfrak B}_2^+(f)&:=&\lim_{m\rightarrow +\infty} \int_Mf\left(\mathcal I_m^{\otimes 2}-\mathbb E[S_m^{\otimes 2}]\right)\, d\nu  \, ,
\end{eqnarray*}
\begin{eqnarray*}
\mathfrak B_2^-(g)&:=&\lim_{m\rightarrow -\infty}\int_M g\left(\mathcal I_m^{\otimes 2}-\mathbb E[S_m^{\otimes 2}]\right)\, d\nu\, .
\end{eqnarray*}
\end{proof}

\begin{remark}\label{MAINter}
Let $f,g:M\rightarrow\mathbb R$ be two bounded observables
such that
\begin{equation}
\sum_{\ell\in\mathbb Z^2}|\ell|^4\left(\Vert f
      \mathbf 1_{\mathcal C_\ell}\Vert_{(\xi)}+\Vert g
      \mathbf 1_{\mathcal C_\ell}\Vert_{(\xi)}\right)<\infty\,  
\end{equation}
Assume moreover that $\int_Mf\, d\nu\, \int_Mg\, d\nu=0$ and that
$\tilde {\mathfrak A}_2(f,g)=0$.
Due to Remark \ref{RQE},
\begin{eqnarray*}
&\ &\int_Mf.g\circ T^n\, d\nu\\
&\ &= \frac {(\Sigma^{-2})^{\otimes 2}} {2\pi\sqrt{\det \Sigma^2}n^3}
       *\sum_{\ell,\ell'\in\mathbb Z^2}\left(\frac{A_4(u_\ell,v_{\ell'})}{24}
+\frac {A_0(u_\ell,v_{\ell'})}{24}({\ell'-\ell})^{\otimes 4}+\frac {i\, A_1(u_\ell,v_{\ell'})}6\otimes(\ell'-\ell)^{\otimes 3}\right. \\
&\ &\left.-\frac 14A_2(u_\ell,v_{\ell'})\otimes(\ell'-\ell)^{\otimes 2}
- \frac i6 A_3(u_\ell,v_{\ell'})\otimes(\ell'-\ell)\right)
+o(n^{-3})\, ,
\end{eqnarray*}
where $u_\ell(q,\vec v):=f(q+\ell,\vec v)$
and $v_\ell(q,\vec v):=g(q+\ell,\vec v)$.
\end{remark}
\begin{proof}[Proof of Proposition \ref{casparticulier}]
We apply Remark \ref{MAINter}. Using
the definitions of $A_0$ and $A_1$, we observe that
$$\forall \ell,\ell'\in\mathbb Z^2,\quad A_0(u_\ell,v_{\ell'})=A_1(u_{\ell},v_{\ell'})=0 $$
(since $\mathbb E_{\bar\mu}[u_\ell]=\mathbb E_{\bar\mu}[v_{\ell'}]=0$)
and
$$\sum_{\ell,\ell'\in\mathbb Z^2}A_4(u_{\ell},v_{\ell'})=A_4\left(\sum_{\ell\in\mathbb Z^2}u_\ell,\sum_{\ell'\in\mathbb Z^2}v_{\ell'}\right) =0\, .$$
Moreover
$$\sum_{\ell,\ell'\in\mathbb Z^2}A_3(u_\ell,v_{\ell'})\otimes(\ell'-\ell)
= \sum_{\ell,\ell'\in\mathbb Z^2}h_\ell q_{\ell'}A_3(f_0,g_0)\otimes(\ell'-\ell)=0$$
since $\sum_{\ell\in\mathbb Z^2} h_\ell=\sum_{\ell}q_\ell=0$.
Therefore
\begin{eqnarray*}
&\ &\int_Mf.g\circ T^n\, d\nu\\
&\ &= -\frac 14\frac {(\Sigma^{-2})^{\otimes 2}} {2\pi\sqrt{\det \Sigma^2}n^3}
       *\sum_{\ell,\ell'\in\mathbb Z^2}A_2(u_\ell,v_{\ell'})\otimes(\ell'-\ell)^{\otimes 2}
+o(n^{-3})\\
&\ &= \frac 12\frac {(\Sigma^{-2})^{\otimes 2}} {2\pi\sqrt{\det \Sigma^2}n^3}
       *\sum_{\ell,\ell'\in\mathbb Z^2}h_\ell q_{\ell'}A_2(f_0,g_0)\otimes \ell\otimes\ell'
+o(n^{-3})\\
&\ &= \frac 12\frac {(\Sigma^{-2})^{\otimes 2}} {2\pi\sqrt{\det \Sigma^2}n^3}
     A_2(f_0,g_0)\otimes \sum_{\ell\in\mathbb Z^2}h_\ell.\ell \otimes\sum_{\ell'\in\mathbb Z^2} q_{\ell'}.\ell' 
+o(n^{-3})\\
&\ &= -\frac {(\Sigma^{-2})^{\otimes 2}} {2\pi\sqrt{\det \Sigma^2}n^3}*\left(
     \sum_{j\ge 0}\mathbb E_{\bar\mu}[f_0.\kappa\circ\bar T^j]\otimes
    \sum_{m\le -1}\mathbb E_{\bar\mu}[g_0.\kappa\circ \bar T^m]\otimes
 \sum_{\ell\in\mathbb Z^2}h_\ell.\ell \otimes\sum_{\ell'\in\mathbb Z^2} q_{\ell'}.\ell' \right)
+o(n^{-3})\\
&\ &= -\frac {(\Sigma^{-2})^{\otimes 2}} {2\pi\sqrt{\det \Sigma^2}n^3}*\left(
     \sum_{j\ge 0}\int_Mf\mathcal I_0\otimes \kappa\circ T^j\, d\nu\otimes \sum_{m\le -1}\int_Mg\mathcal I_0\otimes \kappa\circ T^m\, d\nu\right)+o(n^{-3})\, .
\end{eqnarray*}
\end{proof}
\section{Proof of the mixing result in the infinite horizon case}\label{sec:infinite}
\begin{proof}[Proof of Theorem \ref{horizoninfini}]
In \cite{SV2}, Sz\'asz and Varj\'u implemented the Nagaev-Guivarc'h perturbation method via the Keller-Liverani theorem \cite{KL} to prove that Hypothesis \ref{HHH} holds true for the dynamical system $(\hat M,\hat\mu,\hat T)$ with the Young Banach space $\mathcal B$,
with $\mathcal B_0:=\mathbb L^1(\hat \mu)$ and with $\lambda$
having the following expansion:
$$ \lambda_t-1\sim \Sigma_\infty^2*(t^{\otimes 2})\log |t|\, .$$
Hence Hypothesis \ref{HHH1} holds also true, with $\Theta_n=\sqrt{n\log n}\, Id$ and with $Y$ a gaussian random variable
with distribution $\mathcal N(0,\Sigma_\infty^2)$ with density
function $\Phi(x)=\exp(-\frac 12(\Sigma_\infty^2)^{-1}*x^{\otimes 2})/
    (2\pi\sqrt{\det\Sigma_\infty^2})$.
Let $k_n:=\lceil \log^2n\rceil$.
Let $u_{n}(x)$ and $v_{n}(x)$ correspond to the conditional 
expectation of respectively $f$ and $g$ over the connected component of $M\setminus\bigcup_{m=-k_n}^{k_n}T^{-m}\mathcal S_0$ containing $x$.
First note that
\begin{equation}\label{horinf1}
\int_M f.g\circ T^n\, d\nu =\int_M u_n.v_n\circ T^n\, d\nu+O\left(
  \left(L_\xi(f)\int_M|g|\, d\nu+L_\xi(g)\int_M|f|\, d\nu\right)\xi^{k_n}\right)\, .
\end{equation}
As noticed in Proposition \ref{pro:pertu2b},  
there exist $\hat f_n,\hat g_n:\hat M\times\mathbb Z^2\rightarrow\mathbb C$
such that
$$\forall \tilde x\in\tilde M,\quad\hat f_n (\hat \pi(\tilde x),\ell)=u_{n}(\bar T^{k_n}(\tilde\pi (\tilde x))+\ell)\, ,$$
$$\forall \tilde x\in\tilde M,\quad\hat g_n (\hat \pi(\tilde x),\ell)=v_{n}(\bar T^{k_n}(\tilde\pi (\tilde x))+\ell)\, ,$$
with the notation $(q,\vec v)+\ell=(q+\ell,\vec v)$ for every
$(q,\vec v)\in \bar M$.
For $n$ large enough, $n>3k_n$ and, due to \eqref{EqClef},
\begin{eqnarray*}
&\ &\int_M\, u_{n}.v_{n}\circ T^n\, d\nu
=\sum_{\ell,\ell'\in\mathbb Z^2}\mathbb E_{\bar\mu}[u_{n}(\cdot+\ell).\mathbf 1_{S_n=\ell'-\ell}.v_{n}(\bar T^n(\cdot)+\ell')]\nonumber\\
&\ &=\sum_{\ell,\ell'\in\mathbb Z^2}\frac 1{(2\pi)^2}\int_{[-\pi,\pi]^2}e^{-it*(\ell'-\ell)}\mathbb E_{\bar\mu}[u_{n}(\cdot+\ell).e^{it*S_n}
   .v_{n}(\bar T^n(\cdot)+\ell')]\nonumber\\
&\ &=\sum_{\ell,\ell'\in\mathbb Z^2}\frac 1{(2\pi)^2}\int_{[-\pi,\pi]^2}e^{-it*(\ell'-\ell)}\mathbb E_{\hat\mu}[\hat G_{n,t}(\cdot,\ell')\hat P_t^{n-2k_n}\hat P^{2k_n}(\hat F_{n,t}(\cdot,\ell))]\, dt\, ,\nonumber\end{eqnarray*}
where  $\hat F_{n,t},\hat G_{n,t}:\hat M\rightarrow\mathbb Z^2\rightarrow\mathbb C$ are the functions defined by 
$$\hat F_{n,t}(\hat x,\ell):=\hat f_n(\hat x,\ell).e^{it*\hat S_{k_n}(\hat T^{k_n}(\hat x))},$$ 
$$\hat G_{n,t}(\hat x,\ell):=\hat g_n(\hat x,\ell).e^{it*\hat S_{k_n}(\hat x)}.$$
Moreover $\sup_{n,t}\Vert \hat P^{2k_n}\hat F_{n,t}(\cdot,\ell)\Vert\le (1+2\beta^{-1})\Vert f\mathbf 1_{\mathcal C_\ell}\Vert_\infty$.
Hence, due to Hypothesis \ref{HHH},
\begin{eqnarray*}
&\ &\int_M\, u_{n}.v_{n}\circ T^n\, d\nu\\
&\ &=O(\vartheta^{n-2k_n})+\sum_{\ell,\ell'\in\mathbb Z^2}\frac 1{(2\pi)^2}\int_{[-\pi,\pi]^2}e^{-it*(\ell'-\ell)}\mathbb E_{\hat\mu}[\hat G_{n,t}(\cdot,\ell')\lambda_t^{n-2k_n}\Pi_t \hat P^{2k_n}(\hat F_{n,t}(\cdot,\ell))]\, dt\nonumber\\
&\ &=O(\vartheta^{n-2k_n})+\sum_{\ell,\ell'\in\mathbb Z^2}\frac 1{\mathfrak a_n^2(2\pi)^2}\int_{[-\mathfrak a_n\pi,\mathfrak a_n\pi]^2}e^{-iu*\frac{\ell'-\ell}{\mathfrak a_n}}\mathbb E_{\hat\mu}[\hat G_{n,u/\mathfrak a_n}(\cdot,\ell')\lambda_{u/\mathfrak a_n}^{n-2k_n}\Pi_{u/\mathfrak a_n} \hat P^{2k_n}(\hat F_{n,u/\mathfrak a_n}(\cdot,\ell))]\, du\nonumber\\
&\ &=o(\mathfrak a_n^{-2})+\sum_{\ell,\ell'\in\mathbb Z^2}\frac 1{\mathfrak a_n^2(2\pi)^2}\int_{[-\mathfrak a_n\pi,\mathfrak a_n\pi]^2}\!\!\!\!\!\!\!
  \mathbb E_{\hat\mu}[\hat G_{n,0}(\cdot,\ell')e^{-\frac12 \Sigma_\infty^2*u^{\otimes 2}}\Pi_0 \hat P^{2k_n}(\hat F_{n,u/\mathfrak a_n}(\cdot,\ell))]\, du\, \\
  &\ &=o(\mathfrak a_n^{-2})+\sum_{\ell,\ell'\in\mathbb Z^2}\frac 1{\mathfrak a_n^2(2\pi)^2}\int_{[-\mathfrak a_n\pi,\mathfrak a_n\pi]^2}\!\!\!\!\!\!\!
  \mathbb E_{\hat\mu}[\hat G_{n,0}(\cdot,\ell')]e^{-\frac12 \Sigma_\infty^2*u^{\otimes 2}} \mathbb E_{\hat\mu}[\hat F_{n,0}(\cdot,\ell))]\, du\, ,
\end{eqnarray*}
where we used the change of variable $u=\mathfrak a_n\, t$ 
with $\mathfrak a_n:=\sqrt{(n-2k_n)\log(n-2k_n)}$, and twice
the dominated convergence theorem. Therefore
\[
\int_M\, u_{n}.v_{n}\circ T^n\, d\nu=\frac 
{\Phi\left(0\right)}{\mathfrak a_n^2(2\pi)^2}\int_Mu_n\, d\nu\,
\int_M v_n\, d\nu+o(\mathfrak a_n^{-2})\, .
\]
The conclusion of the theorem follows from this last formula combined with \eqref{horinf1} and with the facts that
$\mathfrak a_n^2\sim n\log n$
and that
$$\int_Mu_n\, d\nu\, \int_Mv_n\, d\nu=\int_Mf\, d\nu \, \int_Mg\, d\nu\, ,$$
due to the dominated convergence theorem.
\end{proof}
\begin{appendix}
\section{Billiard with finite horizon: about the coefficients $A_{m}$}\label{sec:coeff}
Let $\mathcal W^s$ (resp. $\mathcal W^u$) be the set of stable (resp. unstable) $H$-manifolds.
In \cite{Chernov}, Chernov defines two separation times $s_+$ and $s_-$ which are dominated by $s$ and 
such that, for every positive integer $k$,
$$\forall W^u\in\mathcal W^u,\ \forall \bar x,\bar y\in W^u,\quad s^+(\bar T^{-k}\bar x,\bar T^{-k}\bar y)=s^+(x,y)+k,$$
$$\forall W^s\in\mathcal W^s,\ \forall \bar x,\bar y\in W^s,\quad s^-(\bar T^k\bar x,\bar T^{k}\bar y)=s^-(x,y)+k.$$

\begin{proposition}[\cite{Chernov}, Theorem 4.3 and remark after]\label{decoChernov}
There exist $C_0>0$ and $\vartheta_0\in(0,1)$ such that, for every positive
integer $n$, for every bounded measurable $u,v:\bar M\rightarrow\mathbb R$,
$$\left |\mathbb E_{\bar\mu}[u.v\circ\bar T^n]
    -\mathbb E_{\bar\mu}[u]\mathbb E_{\bar\mu}[v] \right\|
       \le C_0\left(L_u^+\Vert v\Vert_\infty+L_v^-\Vert u\Vert_\infty+\Vert u\Vert_\infty\Vert v\Vert_\infty\right)\vartheta_0^n\, ,
$$
with
$$ L_u^+:=\sup_{W^u\in \mathcal W^u}\sup_{x,y\in W^u,\, x\ne y}(|u(x)-u(y)|\xi^{-\mathbf{s}_+(x,y)}),$$
and
$$L_v^-:=\sup_{W^s\in \mathcal W^s}\sup_{x,y\in W^s,\, x\ne y}(|v(x)-v(y)|\xi^{-\mathbf{s}_-(x,y)})\, .$$
\end{proposition}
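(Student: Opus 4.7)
The plan is to deduce this exponential decay of correlations from the spectral gap of $\hat P$ on the Young Banach space $\mathcal B$ (Proposition~\ref{pro:pertu}(ii), specialised to $t=0$) together with the tower formalism of Section~\ref{sec:young}. The subtlety is that $u$ and $v$ are only assumed Hölder along unstable (resp.\ stable) manifolds with respect to the dynamical metrics $\xi^{s^+}$ and $\xi^{s^-}$, so neither function descends directly to the quotient tower $\hat M$ (where functions must be constant on stable manifolds). I would first make them essentially stable-measurable, paying errors which produce the $L_u^+$ and $L_v^-$ contributions in the target bound.

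I would first symmetrise by $\bar T$-invariance of $\bar\mu$:
\[
\mathbb E_{\bar\mu}[u\cdot v\circ \bar T^n] = \mathbb E_{\bar\mu}[u\circ \bar T^{-k}\cdot v\circ \bar T^{n-k}],\qquad k:=\lfloor n/2\rfloor.
\]
The identity $s^+(\bar T^{-k}x,\bar T^{-k}y)=s^+(x,y)+k$ on unstable manifolds shows that the $+$-constant of $u\circ\bar T^{-k}$ is bounded by $L_u^+\xi^k$, and symmetrically the $-$-constant of $v\circ \bar T^{n-k}$ is bounded by $L_v^-\xi^{n-k}$. Using the absolutely continuous holonomy along stable leaves in the Young construction, I would then build approximations $u',v'$ that are constant on stable manifolds and satisfy $\Vert u\circ \bar T^{-k}-u'\Vert_\infty\le C L_u^+\xi^k$ and $\Vert v\circ \bar T^{n-k}-v'\Vert_\infty\le C L_v^-\xi^{n-k}$. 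Replacing the original functions by $u',v'$ introduces two error terms $C L_u^+\Vert v\Vert_\infty\xi^k$ and $C L_v^-\Vert u\Vert_\infty\xi^{n-k}$, both of which are $O(\vartheta_0^n)$ for $\vartheta_0:=\max(\vartheta,\sqrt{\xi})$; these account for the first two contributions in the claimed bound.

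Since $u'$ and $v'$ are constant on stable manifolds, they lift via Section~\ref{sec:young} to $\hat u',\hat v'$ on $\hat M$, and the remaining main term equals $\mathbb E_{\hat\mu}[\hat u'\cdot \hat v'\circ \hat T^{n-2k}]$. The spectral gap of Proposition~\ref{pro:pertu}(ii) at $t=0$ gives $\hat P^{n-2k}\hat u'=\hat\mu(\hat u')\mathbf 1+O_{\mathcal B}(\vartheta^{n-2k}\Vert\hat u'\Vert)$; pairing with $\hat v'$ in $L^p$ and using $\Vert \cdot\Vert_q\le \Vert\cdot\Vert$ yields the third contribution, bounded by $C\Vert u\Vert_\infty\Vert v\Vert_\infty\vartheta^{n-2k}$. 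The main obstacle will be to verify that the stable-leafwise average $\hat u'$ lies in $\mathcal B$ with $\Vert\hat u'\Vert\le C\Vert u\Vert_\infty$ uniformly in $n$: this amounts to comparing the tower separation time $s_0$ to the billiard separation times $s^\pm$ across the hyperbolic product structure of Young's construction, a geometric compatibility which is exactly what the standard-pair coupling machinery of \cite{Chernov} is designed to handle directly, bypassing the spectral-gap route.
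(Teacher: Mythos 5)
First, a point of comparison: the paper does not prove Proposition \ref{decoChernov} at all --- it is imported verbatim from Chernov's work (Theorem 4.3 there), where it is established by the coupling/standard-pair method. So your attempt to rederive it from the spectral gap of $\hat P$ on the quotient Young tower is necessarily a different route. That route is not unreasonable, but as written it has a genuine gap, precisely at the point where the anisotropy of the hypotheses (only $L_u^+$ for $u$, only $L_v^-$ for $v$) has to be confronted.

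The gap is your claim that $u\circ\bar T^{-k}$ can be approximated in sup norm by a function $u'$ \emph{constant on stable manifolds} with error $C L_u^+\xi^k$. The quantity $L_u^+$ controls the oscillation of $u$ along \emph{unstable} leaves only, and the identity $s^+(\bar T^{-k}x,\bar T^{-k}y)=s^+(x,y)+k$ improves regularity only in that same unstable direction; the oscillation of $u\circ\bar T^{-k}$ along a stable leaf is in no way controlled by $L_u^+$ and can be of order $\Vert u\Vert_\infty$ even when $L_u^+=0$ (take $u$ constant on every unstable leaf but wildly varying transversally). Hence no stable-saturated function is sup-norm close to $u\circ\bar T^{-k}$, and the first error term in your accounting does not exist. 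The step for $v$ is correct --- $v\circ\bar T^{m}$ does have oscillation at most $L_v^-\xi^{m}$ on each stable leaf --- but the correct device for $u$ is different: once $v\circ\bar T^{m}$ has been replaced by a function measurable with respect to the stable partition, one may replace $u\circ\bar T^{-k}$ by its conditional expectation over stable leaves at \emph{zero} cost in the integral, and the whole burden of the proof (and the true source of the $L_u^+$ term in the final bound) is then the verification that this leafwise average lies in $\mathcal B$ with norm bounded by $C(\Vert u\Vert_\infty+L_u^+)$, via the absolute continuity of the stable holonomy and distortion estimates. This is exactly the step you defer to the last sentence and concede is ``what the standard-pair machinery of Chernov is designed to handle,'' so the proposal does not close. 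A secondary, repairable slip: with $k=\lfloor n/2\rfloor$ the spectral gap is applied over $n-2k\in\{0,1\}$ iterates, so your third error term $\vartheta^{n-2k}$ is $O(1)$; you need $k$ to be a small fraction of $n$ (say $\lfloor n/4\rfloor$), as in the paper's own use of this splitting in Proposition \ref{pro:pertu2b} and Proposition \ref{TLL}, where $k$ is taken of order $(\log n)^2$.
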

Note that
$$ L_u^+\le L_\xi(u\mathbf 1_{\bar M }),\quad L_u^-\le L_\xi(u\mathbf 1_{\bar M })\, ,$$
$$L^+_{u\circ \bar T^{-k}}\le L_u^+\xi^k\quad \mbox{and}\quad
    L^-_{v\circ \bar T^{k}}\le L_v^-\xi^k\, . $$

We will set $\tilde u:=u-\mathbb E_{\bar\mu}[u]$ and $\tilde v:=v-\mathbb E_{\bar\mu}[v]$.
We will express the terms $A_m(u,v)$ for $m\in\{1,2,3,4\}$ in terms of the follwing quantities:
$$B_1^+(u):= \sum_{j\ge 0}\mathbb E_{\bar\mu}[u.\kappa\circ T^j] \, ,\quad B_1^-(v):=\sum_{m\le -1}\mathbb E_{\bar\mu}[v.\kappa\circ \bar T^m]\, ,$$
$$B_2^+(u):=\sum_{j,m\ge 0}\mathbb E_{\bar\mu}[\tilde u.\kappa\circ \bar T^j\otimes\kappa\circ\bar T^m]\, ,\quad B_2^-(v):=\sum_{j,m\le -1}
   \mathbb E_{\bar\mu}[\tilde v.\kappa\circ \bar T^j\otimes \kappa\circ \bar T^m]\, ,$$
$$B_0^-(v):=\sum_{k\le -1}|k|
\mathbb E_{\bar\mu}[\tilde v.\kappa\circ\bar T^k]\, ,\quad B_0^+(u)=\sum_{k\ge 0}k
\mathbb E_{\bar\mu}[\tilde u.\kappa\circ\bar T^k]\, ,$$
$$B_0:=B_0^-(\kappa)+B_0^+(\kappa)=\sum_{m\in\mathbb Z}|m|\mathbb E_{\bar\mu}[\kappa\otimes\kappa\circ\bar T^m]\, , $$
$$B_{0,2}^+(u):=\sum_{k,m\ge 0}\max(k,m)
\mathbb E_{\bar\mu}[\tilde u.\kappa \circ \bar T^k\otimes \kappa\circ\bar T^m]\, .$$
$$B_{0,2}^-(v):=
\sum_{k,m\ge 1}\max(k,m)
\mathbb E_{\bar\mu}[\tilde v.\kappa \circ \bar T^{-k}\otimes \kappa\circ\bar T^{-m}]\, ,$$
\begin{eqnarray*}
B_3^+(u)&:=&\sum_{k,r, m\ge 0} \mathbb E_{\bar\mu}[
     \tilde u.\kappa\circ \bar T^{\min(k,r,m)}\\
&\ &\left(\kappa\circ \bar T^{\max(k,r,m)}\otimes\kappa\circ\bar T^{med(k,r,m)}-\mathbb E_{\bar\mu}[\kappa\circ \bar T^{\max(k,r,m)}\otimes\kappa\circ\bar T^{med(k,r,m)}]\right)]\, ,
\end{eqnarray*}
\begin{eqnarray*}
B_3^-(v)&:=&\sum_{m, r, s\le -1}\mathbb E_{\bar\mu}[
\tilde v .\kappa\circ\bar T^{\max(m,r,s)}\otimes\\
&\ & \left(\kappa\circ \bar T^{\min(m,r,s)} \otimes\kappa\circ\bar T^{med(m,r,s)}-\mathbb E_{\bar\mu}[\kappa\circ \bar T^{\min(m,r,s)} \otimes\kappa\circ\bar T^{med(m,r,s)}]\right)]\, ,
\end{eqnarray*}
with $med(m,r,s)$ the mediane of $(m,r,s)$.
\begin{proposition}\label{Amn!!}
Let $u,v:\bar M\rightarrow \mathbb C$ be two dynamically Lipschitz continuous functions, with respect to $d_\xi$ with $\xi\in(0,1)$.
Then
\begin{eqnarray}
 A_{0}(u,v)&=&\mathbb E_{\bar\mu}[u].\mathbb E_{\bar\mu}[v]\\
 A_{1}(u,v)&=& i\lim_{n\rightarrow +\infty}\mathbb E_{\bar\mu}[u.S_n.v\circ\bar T^n]=i\,B_1^+(u)\mathbb E_{\bar \mu}[v]+i\,B_1^-(v)\mathbb E_{\bar\mu}[u]\\
 A_{2}(u,v)&=&\lim_{n\rightarrow +\infty}(n\, \mathbb E_{\bar\mu}[u]\mathbb E_{\bar\mu}[v]\Sigma^2-\mathbb E_{\bar\mu}[u.S_n^{\otimes 2}.v\circ \bar T^n])\\
&=&-2\, B_1^+(u)\otimes B_1^-(v)-\mathbb E_{\bar\mu}[v]B_2^+(u)-\mathbb E_{\bar\mu}[u]B_2^-(v)  +\mathbb E_{\bar\mu}[u]\mathbb E_{\bar\mu}[v]\, B_0\, ,\label{EEEE}\label{Pi"0}
\end{eqnarray}

Moreover
\begin{eqnarray}
A_{3}(u,v)&=&\lim_{n\rightarrow +\infty}\left(3in\Sigma^2\otimes
\mathbb E_{\bar\mu}[u.S_n.v\circ\bar T^n]-i\mathbb E_{\bar\mu}[u.S_n^{\otimes 3}.v\circ\bar T^n]\right)\nonumber\\
&=&3 A_1(u,v)\otimes B_0+3i\Sigma^2
   \otimes\left(\mathbb E_{\bar\mu}[u]B_0^-(v)+ \mathbb E_{\bar \mu}[ v]B_0^+(u)\right)\nonumber\\
&\ &-i\mathbb E_{\bar\mu}[v]B_3^+(u)-i\mathbb E_{\bar\mu}[u]B_3^-(v)-3iB_2^-(v)\otimes B_1^+(u)-3iB_2^+(u)\otimes
B_1^-(v)\label{Pi'''0}
\end{eqnarray}
and
\begin{eqnarray*}
A_{4}(u,v)&=&
\lim_{n\rightarrow +\infty}\mathbb E_{\bar\mu}[u.S_n^{\otimes 4}.v\circ\bar T^n]+(\lambda^{-n})_0^{(4)}\mathbb E_{\bar\mu}[u]\mathbb E_{\bar\mu}[v]+6n\Sigma^2\otimes\mathbb E_{\bar\mu}[u.S_n^{\otimes 2}.v\circ\bar T^n]\\
&=&6B_0A_2(u,v)-6\Sigma^2\otimes\left(\mathbb E_{\bar \mu}[ u] B_{0,2}^-(v))-6\mathbb E_{\bar \mu}[ v] B_{0,2}^+(u)\right)\\
&\ &+\mathbb E_{\bar \mu}[ u]\mathbb E_{\bar \mu}[ v](A_{4}(\mathbf 1,\mathbf 1)-6B_0^{\otimes 2})\\
&\ &-12\Sigma^2\otimes 
(B_1^+(u)\otimes B_0^-(v)+B_1^-(v)\otimes B_0^+(u)
          -B_1^+(u)\otimes B_1^-(v))\\
&\ &+4B_1^+(u)\otimes B_3^-(v)+6 B_2^+(u)\otimes B_2^-(v) +4\  B_1^-(v)\otimes B_3^+(u)\, .\label{formuleA4}
\label{Pi40}
\end{eqnarray*}
\end{proposition}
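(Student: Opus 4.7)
The plan starts from the characterization \eqref{Amlim} of Proposition \ref{TLL2}: setting $F_n(t) := \mathbb E_{\bar\mu}[u \cdot e^{it*S_n} \cdot v\circ\bar T^n]$, one has
$$A_m(u,v) = \lim_{n\to+\infty}\frac{\partial^m}{\partial t^m}\left(\frac{F_n(t)}{\lambda_t^n}\right)_{|t=0}.$$
I expand by Leibniz and exploit the fact that $\lambda$ is even at the origin (Proposition \ref{pro:pertu2}), so every odd derivative of $\lambda$, and hence of $\lambda^{-n}$, vanishes there. Using $\lambda_0^{(2)} = -\Sigma^2$ and the expression for $\lambda_0^{(4)}$ provided by Proposition \ref{lambda04}, a Faa di Bruno computation applied to $-n\log\lambda$ yields $(\lambda^{-n})_0^{(2)} = n\Sigma^2$ and $(\lambda^{-n})_0^{(4)}$ as a polynomial of degree $2$ in $n$ with leading coefficient proportional to $(\Sigma^2)^{\otimes 2}$. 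Since $F_n^{(k)}(0) = i^k\,\mathbb E_{\bar\mu}[u\cdot S_n^{\otimes k}\cdot v\circ\bar T^n]$, each of the formulas for $A_0,\dots,A_4$ reduces to identifying the limit of each Leibniz term.

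The key step is to evaluate
$$I_k(n) := \mathbb E_{\bar\mu}[u\cdot S_n^{\otimes k}\cdot v\circ\bar T^n] = \sum_{0\le j_1,\dots,j_k\le n-1}\mathbb E_{\bar\mu}\!\left[u\cdot\bigotimes_{\alpha=1}^{k}\kappa\circ\bar T^{j_\alpha}\cdot v\circ\bar T^n\right]$$
modulo the counterterms prescribed by Leibniz. I split the $(j_1,\dots,j_k)$-sum according to which of the $j_\alpha$ lie near $0$ (i.e.\ $j_\alpha<n/3$), near $n$ ($j_\alpha>2n/3$), or in between. In each regime I iterate Proposition \ref{decoChernov}: the $\kappa$'s with $j_\alpha$ close to $0$ concentrate around $u$, those close to $n$ concentrate around $v\circ\bar T^n$, and each middle $\kappa$ decouples from both ends (the dynamically Lipschitz hypothesis on $u,v$ ensures that the residual $L_\xi$-norms decay geometrically in the gaps). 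Summing configurations identifies the bounded boundary contributions with the $B_1^\pm$, $B_2^\pm$, $B_3^\pm$ quantities, while the polynomial-in-$n$ middle contributions are absorbed by the $(\lambda^{-n})_0^{(m-k)}$ counterterms via $\Sigma^2 = \sum_{m\in\mathbb Z}\mathbb E_{\bar\mu}[\kappa\otimes\kappa\circ\bar T^m]$, leaving the finite remainders $\mathfrak B_0$, $B_0^\pm$, $B_{0,2}^\pm$ coming from the finite difference between $\sum_{|m|\le n}$ and $\sum_{m\in\mathbb Z}$.

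Cases $m=0,1$ are immediate from Chernov decorrelation. For $m=2$, the counterterm $n\Sigma^2\,\mathbb E_{\bar\mu}[u\cdot v\circ\bar T^n]$ cancels the divergent middle--middle part of $I_2(n)$; the finite remainder contributes $\mathbb E[u]\mathbb E[v]\,\mathfrak B_0$, the mixed boundary configuration produces $-2B_1^+(u)\otimes B_1^-(v)$, and each one-sided configuration yields $-\mathbb E[v]B_2^+(u)$ or $-\mathbb E[u]B_2^-(v)$. For $m=3$ the same analysis, combined with absorption of the $O(n)$ piece of $I_3(n)$ by $3\,F_n^{(1)}(0)\otimes(\lambda^{-n})_0^{(2)}$, yields the claimed formula. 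The main obstacle is $m=4$: one must simultaneously cancel the $O(n^2)$ and $O(n)$ divergences among $I_4(n)$, $6\,F_n^{(2)}(0)\otimes(\lambda^{-n})_0^{(2)}$ and $F_n(0)\,(\lambda^{-n})_0^{(4)}$ while correctly symmetrising over the $\binom{4}{k}$ ways of distributing tensor indices. The explicit form of $\Lambda_4$ from Proposition \ref{lambda04} is what converts the residual $-n\lambda_0^{(4)}$ factor into the combination $A_4(\mathbf 1,\mathbf 1) - 6\,\mathfrak B_0^{\otimes 2}$ appearing in the statement, after which the finite boundary and mixed terms produce the $B_{0,2}^\pm$, $B_3^\pm$ contributions together with the cross products $B_1^\pm\otimes B_3^\mp$ and $B_2^+\otimes B_2^-$.
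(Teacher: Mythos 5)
Your plan is correct and follows essentially the same route as the paper's proof: start from the characterization $A_m(u,v)=\lim_n\partial_t^m(F_n(t)/\lambda_t^n)|_{t=0}$, expand by Leibniz using the evenness of $\lambda$ and $\lambda_0^{(2)}=-\Sigma^2$, and evaluate $\mathbb E_{\bar\mu}[u\,S_n^{\otimes k}\,v\circ\bar T^n]$ by splitting the index sums into boundary and middle regimes and iterating Chernov's decorrelation (Proposition \ref{decoChernov}), with the divergent middle parts cancelled by the $(\lambda^{-n})_0^{(j)}$ counterterms and the finite remainders identified with the $B$-quantities. The only difference is one of bookkeeping (the paper orders the indices and splits on the largest gap, with multiplicity coefficients $c_{(k,m,r)}$, and uses the bilinearity decomposition in $\tilde u,\tilde v$ explicitly), which your plan leaves implicit but would reproduce when carried out.
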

\begin{proof}
As in the proof of Theorem \ref{TLL2}, we set 
$$ A_{m,n}(u,v):=\left(\mathbb E_{\bar\mu}[v.e^{it*S_n}.u\circ \bar T^n]/\lambda_t^n\right)^{(m)}_{|t=0}.$$
We will only use Proposition \ref{decoChernov} and the fact that $\lambda_t=1-\frac 12\Sigma^2*t^{\otimes 2}+\frac 1{4!}\lambda_0^{(4)}*t^{\otimes 4}+o(|t|^4)$ to compute $A_m(u,v)=\lim_{n\rightarrow +\infty}A_{m,n}(u,v)$. 
\begin{itemize}
\item
First we observe that
$
 A_{0,n}(u,v)=\mathbb E_{\bar\mu}[u.v\circ\bar T^n]
$
and we apply Proposition \ref{decoChernov}.
\item
Second,
\begin{eqnarray*}
 A_{1,n}(u,v)&=&i\, \mathbb E_{\bar\mu}[u.S_n.v\circ\bar T^n]
= i\, \sum_{k=0}^{n-1}\mathbb E_{\bar\mu}[u.\kappa\circ\bar T^k.v\circ\bar T^n]\\
&=& i\, \sum_{k=0}^{\lfloor n/2\rfloor}\mathbb E_{\bar\mu}[u.\kappa\circ \bar T^k]\mathbb E_{\bar\mu}[v]+i\, \sum_{\lfloor n/2\rfloor+1}^{n-1}\mathbb E_{\bar\mu}[u]\mathbb E_{\bar\mu}[v.\kappa\circ \bar T^{-(n-k)}]+O\left(n\vartheta_0^{n/2}\Vert u\Vert_{(\xi)}\Vert u\Vert_{(\xi)}\right)\\
&=& i\, \mathbb E_{\bar\mu}[v]\sum_{k\ge 0}
\mathbb E_{\bar\mu}[u.\kappa\circ \bar T^k]+i\, \mathbb E_{\bar\mu}[u]\sum_{m\le -1}\mathbb E_{\bar\mu}[v.\kappa\circ \bar T^{m}]+O\left(n\vartheta_0^{n/2}\Vert u\Vert_{(\xi)}\Vert u\Vert_{(\xi)}\right),
\end{eqnarray*}
where we used several times  Proposition \ref{decoChernov},
combined with the fact that $\mathbb E_{\bar\mu}[\kappa]=0$.
\item
Third,
\begin{eqnarray}
 A_{2,n}(u,v)&=&-\mathbb E_{\bar\mu}[ u . S_n^{\otimes 2}. v\circ \bar T^n] +n \Sigma^2\mathbb E_{\bar\mu}[ u]\mathbb E_{\bar\mu}[ v]\label{formuleA2n}\\
&=&-\sum_{k,m=0}^{n-1}\mathbb E_{\bar\mu}[ u.(\kappa\circ \bar T^k\otimes  \kappa\circ\bar T^m). v\circ \bar T^n] +n \Sigma^2\mathbb E_{\bar\mu}[ u]\mathbb E_{\bar\mu}[ v]\nonumber\\
&=&-\sum_{k,m=0}^{n-1}\mathbb E_{\bar\mu} [ \tilde u\kappa\circ\bar T^k\otimes\kappa\circ \bar T^m.\tilde v\circ\bar T^n]\nonumber\\
&\ &-\sum_{k,m=0}^{n-1}\left(\mathbb E_{\bar\mu}[ u]\mathbb E_{\bar\mu}[ \kappa\circ\bar T^k\otimes \kappa\circ\bar T^m
         \tilde v\circ\bar T^n]+\mathbb E_{\bar\mu}[\tilde u.\kappa\circ\bar T^k\otimes\kappa\circ \bar T^m]\mathbb E_{\bar\mu}[ v]\right)\nonumber\\
&\ &\ \ \ \ \ +(n\Sigma^2-\sum_{k,m=0}^{n-1}\mathbb E_{\bar\mu}[ \kappa\circ \bar T^k\otimes \kappa\circ \bar T^m])\mathbb E_{\bar\mu}[ u]\mathbb E_{\bar\mu}[ v]
\end{eqnarray}
\begin{itemize}
\item On the first hand
\begin{eqnarray*}
n\Sigma^2-\sum_{k,m=0}^{n-1}\mathbb E_{\bar\mu}[\kappa\circ \bar T^k\otimes\kappa\circ \bar T^m]
&=&n\sum_{k\in\mathbb Z}\mathbb E_{\bar\mu}[ \kappa\otimes\kappa\circ \bar T^k]
   -\sum_{k=-n}^n(n-|k|)\mathbb E_{\bar\mu}[ \kappa\otimes\kappa\circ \bar T^k]\\
&=&\sum_{k\in\mathbb Z}\min(n,|k|)\mathbb E_{\bar\mu}[\kappa\otimes\kappa\circ \bar T^k],
\end{eqnarray*}
which converges to $\sum_{k\in\mathbb Z}|k|\mathbb E_{\bar\mu}[\kappa\otimes\kappa\circ\bar T^k]$.
\item On the second hand,  for $0\le k\le m\le n$, due to Proposition \ref{decoChernov} (treating separately the cases $k\ge n/3$, $m-n\ge n/3$ et $ n-m\ge n/3$),
\begin{equation}\label{Esp4termes}
\mathbb E_{\bar\mu} [ \tilde u.\kappa\circ \bar T^k\otimes
\kappa
\circ\bar T^m.\tilde v\circ \bar T^n]=\mathbb E_{\bar\mu}[\tilde u.\kappa \circ \bar T^k]\otimes\mathbb E_{\bar\mu}[ \tilde v. \kappa\circ \bar T^{n-m}] +O(\Vert u\Vert_{(\xi)}\Vert v\Vert_{(\xi)}\vartheta_0^{n/3}).
\end{equation}
Analogously
\begin{equation}
\mathbb E_{\bar\mu}[ \kappa\circ \bar T^k\otimes\kappa\circ\bar T^m
         \tilde v\circ\bar T^n]=O(\Vert v\Vert_{(\xi)}\vartheta_0^{(n-k)/2})
\end{equation}
\begin{equation}\label{Esp3termes}
\mathbb E_{\bar\mu}[\tilde u.\kappa\circ \bar T^k\otimes\kappa\circ \bar T^m]=O(\Vert u\Vert_{(\xi)}\vartheta_0^{m/2})\, .
\end{equation}
Hence
$$\sum_{k,m=0}^{n-1}\mathbb E_{\bar\mu}[\tilde u.\kappa\circ\bar T^k\otimes\kappa\circ\bar T^m] =B_2^+(\tilde u)+O(\vartheta_0^{n/2}\Vert u\Vert_{(\xi)})\, ,$$
\begin{equation}
\sum_{k,m=0}^{n-1}\mathbb E_{\bar\mu}[\kappa\circ\bar T^k\otimes\hat\kappa\circ\bar T^m\tilde v\circ \bar T^n]\\
=B_2^-( v)+O(\vartheta_0^{n/2}\Vert v\Vert_{(\xi)}\, ,
\end{equation}
and
\begin{eqnarray*}
&\ &
   \sum_{k,m=0}^{n-1}\mathbb E_{\bar\mu}[\tilde u.\kappa\circ\bar T^k\otimes\kappa\circ\bar T^m.\tilde v\circ\bar T^n]\\
&=&\left(\sum_{k=0}^{n-1}
     \mathbb E_{\bar\mu}[\tilde u.\kappa^{\otimes 2}\circ \bar T^k.\tilde v\circ \bar T^n]+2\sum_{0\le k<m< n}\mathbb E_{\bar\mu}[\tilde u.\kappa\circ \bar T^k\otimes\kappa\circ \bar T^m.\tilde v\circ \bar T^n]\right)\\
&=&
 2\sum_{0\le k< m<n}\mathbb E_{\bar\mu}[\tilde u.(\kappa\circ \bar T^k)]\otimes\mathbb E_{\hat\mu}[\tilde v.\bar\kappa\circ\bar T^{n-m}]+O(\vartheta_0^{n/2}\Vert u\Vert_{(\xi)}\Vert v\Vert_{(\xi)})\\
&=&2B_1^+(u)\otimes B_1^-(v)+O(\vartheta_0^{n/2}\Vert u\Vert_{(\xi)}\Vert v\Vert_{(\xi)})  \, ,
\end{eqnarray*}
where we used the fact that $\mathbb E_{\bar\mu}[\tilde u.\kappa^{\otimes 2}\circ \bar T^k.\tilde v\circ \bar T^n]=O(\Vert u\Vert_{(\xi)}\Vert v\Vert_{(\xi)}\vartheta_0 ^{n/2})$.
\end{itemize}
Therefore we have proved \eqref{Pi"0}.
\item 
Let us prove \eqref{Pi'''0}. 
By bilinearity, we have
\begin{equation}\label{A311}
A_{3,n}(u,v)=A_{3,n}(\tilde u,\tilde v)+\mathbb E_{\bar \mu}[ u]A_{3,n}(\mathbf 1,\tilde v)+\mathbb E_{\bar \mu}[ v]A_{3,n}(\tilde u,\mathbf 1)+\mathbb 
E_{\bar\mu}[u]\mathbb E_{\bar\mu}[v]A_{3,n}(\mathbf 1,\mathbf 1).
\end{equation}
Note that 
$$A_{3,n}(\mathbf 1,\mathbf 1)=-i\mathbb E_{\bar\mu}[S_n^{\otimes 3}]=0.$$
since  $(S_n)_n$
has the same distribution as $(-S_n)_n$ (see the begining of the proof of Proposition \ref{pro:pertu2}).
We will use the following notations:
$c_{(k,m,r)}$ denotes the number of uples made of
$k,m,r$ (with their multiplicities) and we will write $\widetilde{\overbrace{F}}$ for $F-\mathbb E_{\bar\mu}[F]$
when $F$ is given by a long formula.
\begin{itemize}
\item We start with the study of $A_{3,n}(\tilde u,\mathbf 1)$.
\begin{eqnarray}
A_{3,n}(\tilde u,\mathbf 1)&=&-i\mathbb E_{\bar\mu}[\tilde u.S_n^{\otimes 3}]+3in\Sigma^2\otimes\mathbb E_{\bar\mu}[\tilde u.S_n]\nonumber\\
&=&-i\sum_{0\le k\le m\le r\le n-1}c_{k,m,r}\mathbb E_{\bar\mu}[\tilde u.\kappa\circ\bar T^k\otimes \kappa\circ\bar T^m\otimes\kappa\circ \bar T^r]+3in\Sigma^2\otimes\mathbb E_{\bar\mu}[\tilde u.S_n]\nonumber\\
&=&-i\sum_{0\le k\le m\le r\le n-1}c_{k,m,r}\mathbb E_{\bar\mu}[\tilde u.\kappa\circ\bar T^k]\otimes \mathbb E_{\bar\mu}[\kappa\circ\bar T^m\otimes\kappa\circ \bar T^r]+3in\Sigma^2\otimes\mathbb E_{\bar\mu}[\tilde u.S_n]\nonumber\\
&\ &-i\sum_{0\le k\le m\le r\le n-1}c_{k,m,r}\mathbb E_{\bar\mu}
\left[\widetilde{\overbrace{\tilde u.\kappa\circ\bar T^k}}\otimes\widetilde{\overbrace{\kappa\circ\bar T^m\otimes\kappa\circ\bar T^r}}\right]\nonumber
\end{eqnarray}
\begin{eqnarray}
&\ &A_{3,n}(\tilde u,\mathbf 1)\\
&=&-3i\sum_{k\ge 0}\sum_{m\in\mathbb Z}\max(0,n-|m|-k)
\mathbb E_{\bar\mu}[\tilde u.\kappa\circ\bar T^k]\otimes
\mathbb E_{\bar\mu}[\kappa.\kappa\circ\bar T^m]
+3in\Sigma^2\otimes\mathbb E_{\bar\mu}[\tilde u.S_n]\nonumber\\
&\ &-i\sum_{k,m,r=0}^{n-1}\mathbb E_{\bar\mu}
\left[\widetilde{\overbrace{\tilde u.\kappa\circ\bar T^{\min(k,m,r)}}}\otimes\widetilde{\overbrace{\kappa\circ\bar T^{med(k,m,r)}\otimes\kappa\circ\bar T^{\max(k,m,r)}}}\right]
\nonumber\\
&=&3i\sum_{k\ge 0}\sum_{m\in\mathbb Z}(|m|+k)
\mathbb E_{\bar\mu}[\tilde u.\kappa\circ\bar T^k]\otimes
\mathbb E_{\bar\mu}[\kappa.\kappa\circ\bar T^m]\nonumber\\
&\ &-3in\, (B_1^+(u)-\mathbb E_{\bar\mu}[\tilde u.S_n])\otimes \Sigma^2-iB_{3}^+(\tilde u)+O(\vartheta_0^{n/3}\Vert u\Vert_{(\xi)})\nonumber\\
\end{eqnarray}
and so
\begin{equation}
A_{3,n}(\tilde u,\mathbf 1)=-iB_3^+(\tilde u)+3iB_0^+(\tilde u)\otimes\Sigma^2+3iB_0\otimes B_1^+(\tilde u)\, .\label{A322}
\end{equation}
\item Analogously,
\begin{equation}
A_{3,n}(\mathbf 1,\tilde v)=-iB_3^-(\tilde v)+3iB_0^-(\tilde v)\otimes\Sigma^2+3iB_0\otimes B_1^-(\tilde v)\, .\label{A333}
\end{equation}
\item Finally
\begin{eqnarray*}
A_{3,n}(\tilde u,\tilde v)&=&-i \mathbb E_{\bar\mu}[\tilde u.S_n^{\otimes 3}.\tilde v \circ \bar T^n]+ 3i\, n \Sigma^2\otimes\mathbb E_{\bar\mu}[\tilde u.S_n.\tilde v \circ \bar T^n]\nonumber\\
&=&-i\sum_{k,m,r=0}^{n-1}\mathbb E_{\bar\mu}[
     \tilde u.\kappa\circ \bar T^k\otimes\kappa\circ\bar T^m\otimes\kappa\circ\bar T^r.\tilde v\circ\bar T^n]+3i n\Sigma^2\otimes\tilde A_{1,n}(\tilde u,\tilde v)\nonumber\\
&=&-i\sum_{k,m,r=0}^{n-1}\mathbb E_{\bar\mu}[
     \tilde u.\kappa\circ \bar T^k\otimes\kappa\circ\bar T^m\otimes\kappa\circ\bar T^r.\tilde v\circ\bar T^n]+O(n^2\vartheta_0^{n/2}\Vert u\Vert_{(\xi)}
\Vert v\Vert_{(\xi)})\, .
\end{eqnarray*}
Assume $0\le k\le m\le r\le n-1$.
Considering separately the cases $k\ge n/4$, $m-k\ge n/4$,
$r-m\ge n/4$ and $n-r\ge n/4$, we observe that
\begin{eqnarray}
&\ &\mathbb E_{\bar\mu}[
     \tilde u.\kappa\circ \hat T^k\otimes\kappa\circ\bar T^m.\otimes\kappa\circ\bar T^r.\tilde v\circ\bar T^n]\nonumber\\
&\ &=\mathbb E_{\bar\mu}[\tilde u.\kappa\circ \bar T^k]\otimes
\mathbb E_{\bar\mu}[\tilde v.\kappa\circ\bar T^{-(n-r)}\otimes  \kappa\circ\bar T^{-(n-m)}]\nonumber\\
&\ &+
\mathbb E_{\bar\mu}[\tilde v .\kappa\circ\bar T^{-(n-r)}]\otimes \mathbb E_{\bar\mu}[\tilde u.\kappa\circ\bar T^k\otimes\kappa\circ \bar T^m]+O(\vartheta_0^{n/4} \Vert v\Vert_{(\xi)}\, \Vert u\Vert_{(\xi)})\, .\label{Esp5termes}
\end{eqnarray}
And so 
\begin{equation}
A_{3,n}(\tilde u,\tilde v)
=-3iB_1^+(\tilde u)B_2^-(\tilde v)-3iB_1^-(\tilde v)B_2^+(\tilde u)\,
.
\end{equation}
\end{itemize}
This combined with \eqref{A311}, \eqref{A322} and \eqref{A333} leads to \eqref{Pi'''0}.
\item
It remains to prove \eqref{Pi40}. Observe first that
\begin{eqnarray}
 A_{4,n}(u,v)&=&(\lambda^{-n})^{(4)}_0\mathbb E_{\bar \mu}[\bar u]\mathbb E_{\bar \mu}[\bar v]+6n\Sigma^2\otimes\mathbb E_{\bar \mu}[u.S_n^{\otimes 2}.v\circ\bar T^n]+\mathbb E_{\bar \mu}[u.S_n^{\otimes 4}.v\circ\bar T^n]\nonumber\\
&=&(\lambda^{-n})^{(4)}_0\mathbb E_{\bar \mu}[\bar u]\mathbb E_{\bar \mu}[\bar v]+6n\Sigma^2\otimes \left(n\Sigma^2\mathbb E_{\bar\mu}[u]\mathbb E_{\bar\mu}[v]-A_{2,n}(u,v)\right)+\mathbb E_{\bar \mu}[u.S_n^{\otimes 4}.v\circ\bar T^n]\label{decompA4}
 \end{eqnarray}
where we used \eqref{formuleA2n}.
Note that
\begin{eqnarray}
\mathbb E_{\bar \mu}[u.S_n^{\otimes 4}.v\circ\bar T^n]
&=& \mathbb E_{\bar \mu}[\tilde u.S_n^{\otimes 4}.\tilde v\circ\bar T^n] +\mathbb E_{\bar \mu}[u]\mathbb E_{\bar \mu}[S_n^{\otimes 4}.\tilde v\circ\bar T^n]\nonumber\\
&\ &+\mathbb E_{\bar \mu}[v]\mathbb E_{\bar \mu}[\tilde u.S_n^{\otimes 4}]+\mathbb E_{\bar \mu}[u]\mathbb E_{\bar \mu}[v]\mathbb E_{\bar \mu}[S_n^{\otimes 4}].\label{A4bilin}
\end{eqnarray}
We now study separately each term of the right hand side
of this last formula. 
\begin{itemize}
\item First:
\begin{eqnarray}
&\ &\mathbb E_{\bar\mu}[\tilde u.S_n^{\otimes 4}   .\tilde v \circ \bar T^n]\nonumber\\
&\ &= \sum_{k,m,r,s=0}^{n-1}\mathbb E_{\hat\mu}[
     \tilde u.\kappa\circ \bar T^k\otimes\kappa\circ\bar T^m.\kappa\circ\bar T^r\otimes\kappa\circ\bar T^s.\tilde v\circ\bar T^n]\,\nonumber\\
&\ &= \sum_{0\le k\le m\le r\le s\le n-1}c_{(k,m,r,s)}
    \mathbb E_{\bar\mu}\left[
     \tilde u. \kappa\circ \bar T^k\otimes\widetilde{\overbrace{\kappa\otimes\kappa\circ\bar T^{r-m}}}\circ \bar T^m.\kappa\circ\bar T^s.\tilde v\circ\bar T^n\right]\,\nonumber\\
&\ & \ \ + \sum_{0\le k\le m\le r\le s\le n-1}c_{(k,m,r,s)}
    \mathbb E_{\bar\mu}[
     \tilde u.\kappa\circ \bar T^k.\kappa\circ\bar T^s.\tilde v\circ\bar T^n]\otimes\mathbb E_{\bar\mu}[\kappa\otimes\kappa\circ \bar T^{r-m}]\label{CCC2}
\end{eqnarray}
with $c_{(k,m,r,s)}$ the number of 4-uples made of $k,m,r,s$
(with the same multiplicities).
Due to \eqref{Esp4termes},
\begin{eqnarray}
&\ &\sum_{0\le k\le m\le r\le s\le n-1}c_{(k,m,r,s)}
    \mathbb E_{\bar\mu}[
     \tilde u.\kappa\circ \bar T^k\otimes\kappa\circ\bar T^s.\tilde v\circ\bar T^n]\otimes\mathbb E_{\bar\mu}[\kappa\otimes\kappa\circ\bar T^{r-m}]\nonumber\\
&\ & =\sum_{0\le k\le m\le r\le s\le n-1}c_{(k,m,r,s)}
    \mathbb E_{\bar\mu}[
     \tilde u.\kappa\circ \bar T^k]\otimes\mathbb E_{\bar \mu}[ \tilde v.\kappa\circ\bar T^{-(n-s)}]\otimes\mathbb E_{\bar\mu}[\kappa\otimes\kappa\circ\bar T^{r-m}]+O(n^4\vartheta_0^{n/3}\Vert u\Vert_{(\xi)}\Vert v\Vert_{(\xi)})\nonumber\\
&\ & =\sum_{k\ge 0}\mathbb E_{\bar\mu}[
     \tilde u.\kappa\circ \bar T^k]  \otimes\sum_{s\ge 1}
      \mathbb E_{\bar \mu}[\tilde v.\kappa \circ \bar T^{-s}]\otimes
\sum_{m=k}^{n-s}\sum_{r=m}^{n-s}c_{(k,m,r,n-s)}\mathbb E_{\bar\mu}[\kappa\otimes\kappa\circ \bar T^{r-m}]+O(n^4\vartheta_0^{n/3}\Vert u\Vert_{(\xi)}\Vert v\Vert_{(\xi)})\nonumber\\
&\ & =\sum_{k\ge 0}\mathbb E_{\bar\mu}[
     \tilde u.\kappa\circ \bar T^k]  \otimes\sum_{s\ge 1}
      \mathbb E_{\bar \mu}[\tilde v.\kappa\circ\bar T^{-s}]
  \otimes 12\mathbb E_{\bar\mu}[ S_{n-s-k+1}^{\otimes 2}]+O(n^4\vartheta_0^{n/3}\Vert u\Vert_{(\xi)}\Vert v\Vert_{(\xi)})\nonumber\\
&\ & =\sum_{k\ge 0}\mathbb E_{\bar\mu}[
     \tilde u.\kappa\circ \hat T^k]  \otimes\sum_{s\ge 1}
      \mathbb E_{\bar \mu}[\tilde v.\kappa\circ\bar T^{-s}]12((n-s-k+1)\Sigma^2-\sum_{r\in\mathbb Z}|r|\mathbb E_{\hat\mu}[\hat\kappa\otimes \hat\kappa\circ \hat T^r]+O(n^4\vartheta_0^{n/3}\Vert u\Vert_{(\xi)}\Vert v\Vert_{(\xi)})\nonumber\\
&\ &=12 B_1^+(\tilde u)B_1^-(\tilde v)\left(n\Sigma^2-\sum_{r\in\mathbb Z}|r|\mathbb E_{\hat\mu}[\hat\kappa\otimes \hat\kappa\circ \hat T^r]\right)\nonumber\\
&\ &-12
\sum_{k\ge 0}\mathbb E_{\bar\mu}[
     \tilde u.\kappa\circ \bar T^k]\otimes  \sum_{s\ge 1}
      \mathbb E_{\bar \mu}[\tilde v.\kappa\circ\bar T^{-s}](s+k-1)\otimes\Sigma^2 
+O(n^4\vartheta_0^{n/3}\Vert u\Vert_{(\xi)}\Vert v\Vert_{(\xi)})\, .\label{CCC3}
\end{eqnarray}
But, on the other hand, treating separately the cases $k\ge n/5$, $m-k\ge n/5$,
$r-m\ge n/5$, $s-r\ge n/5$ and $n-s\ge n/5$, we obtain that, for every $0\le k\le m\le r\le s\le n$,
\begin{eqnarray}
&\ &    \mathbb E_{\bar\mu}\left[
     \tilde u.\kappa\circ \bar T^k\otimes\widetilde{\overbrace{\kappa\otimes\kappa\circ\bar T^{r-m}}}\circ \bar T^m\otimes\kappa\circ\bar T^s.\tilde v\circ\bar T^n\right]\nonumber\\
&\ &=    \mathbb E_{\bar\mu}[
     \tilde u.\kappa\circ \bar T^k]\otimes\mathbb E_{\bar\mu}[\widetilde{\overbrace{\kappa\otimes\kappa\circ\bar T^{r-m}}}\circ \bar T^m\otimes\kappa\circ\bar T^s.\tilde v\circ\bar T^n]\nonumber\\
&\ & +\    \mathbb E_{\bar\mu}[
     \tilde u.\kappa\circ \bar T^k\otimes\hat\kappa\circ \hat T^m]\otimes \mathbb E_{\bar\mu}[\kappa\circ\bar T^{r}\otimes\kappa\circ\bar T^s.\tilde v\circ\bar T^n]\nonumber\\
&\ & +\   \mathbb E_{\bar\mu}[
     \tilde u.\kappa\circ \bar T^k\otimes\widetilde{\overbrace{\kappa\otimes\kappa\circ\bar T^{r-m}}}\circ \bar T^m] \otimes\mathbb E_{\bar\mu}[\kappa\circ\bar T^s.\tilde v\circ\bar T^n]+O(\vartheta_0^{n/5}\Vert u\Vert_{(\xi)}\Vert v\Vert_{(\xi)}).
\end{eqnarray}
Due to \eqref{Esp3termes},
$$ \mathbb E_{\bar\mu}\left[\widetilde{\overbrace{\kappa.\kappa\circ\bar T^{r-m}}}\circ \bar T^m.\kappa\circ\bar T^s.\tilde v\circ\bar T^n\right]=O(\vartheta_0^{n-m}\Vert u\Vert_{(\xi)}\Vert v\Vert_{(\xi)})\, ,$$
$$
\mathbb E_{\bar\mu}[
     \tilde u.\kappa\circ \bar T^k.\widetilde{\overbrace{\kappa.\kappa\circ\bar T^{r-m}}}\circ \bar T^m]\mathbb E_{\bar\mu}[\kappa\circ\bar T^s.\tilde v\circ\bar T^n]=O(\vartheta_0^{m}\vartheta_0^{n-s}\Vert u\Vert_{(\xi)}\Vert v\Vert_{(\xi)})\, ,$$
$$\mathbb E_{\bar\mu}[
     \tilde u.\kappa\circ \bar T^k\otimes\kappa\circ \bar T^m]=O(\vartheta_0^{m}\Vert u\Vert_{(\xi)}).$$
Therefore
\begin{eqnarray}
&\ &\sum_{0\le k\le m\le r\le s\le n-1}c_{(k,m,r,s)}
    \mathbb E_{\bar\mu}[
     \tilde u.\kappa\circ \bar T^k\otimes\widetilde{\overbrace{\kappa\circ \bar T^m\otimes\kappa\circ\bar T^{m}}}\otimes\kappa\circ\bar T^s.\tilde v\circ\bar T^n]\nonumber\\
&\ &=4\sum_{k\ge 0}    \mathbb E_{\bar\mu}[
     \tilde u.\kappa\circ \bar T^k]B_3^-(\tilde v)+4 B_3^+(\tilde u)\otimes\sum_{s\ge 1}\mathbb E_{\bar\mu}[\kappa\circ \bar T^{-s}.\tilde v]\nonumber\\
&\ & +6 \sum_{m, k\ge 0}    \mathbb E_{\bar\mu}[
     \tilde u.\kappa\circ \bar T^k\otimes\kappa\circ \bar T^m]\otimes \sum_{r,s\ge 1}
\mathbb E[\tilde v. \kappa\circ\bar T^{-r}\otimes \kappa\circ\bar T^{-s}]\nonumber\\
&\ &  +O\left(\vartheta_0^{n/5}\Vert u\Vert_{(\xi)}\Vert v\Vert_{(\xi)}\right)\, .\label{CCC4}
\end{eqnarray}
Putting together \eqref{decompA4}, \eqref{CCC2}, \eqref{CCC3} and \eqref{CCC4} leads to 
\begin{eqnarray}
A_{4,n}(\tilde u,\tilde v)&=&-12\sum_{k\ge 0}\mathbb E_{\hat\mu}[
     \tilde u.\kappa\circ \hat T^k]  \sum_{s\ge 1}
      \mathbb E_{\hat \mu}[ \kappa\circ\bar T^{-s}.\tilde v] (s+k-1)\otimes\Sigma^2\nonumber\\
&\ &+4B_1^+(u)\otimes B_3^-(\tilde v)+4\  B_1^-(v)\otimes B_3^+(\tilde u)\nonumber\\
&\ & +6 B_2^+(u)\otimes B_2^-(v)-12B_1^+(\tilde u)\otimes B_1^-(\tilde v)\otimes B_0 +O\left(\vartheta_0^{n/5}\Vert u\Vert_{(\xi)}\Vert v\Vert_{(\xi)}\right)\,\label{A4tilde}
\end{eqnarray}
\item Second:
\begin{equation}
\mathbb E_{\bar \mu}[\tilde u.S_n^{\otimes 4}]
=\sum_{0\le k\le m\le r\le s\le n-1}c_{(k,m,r,s)}\mathbb E_{\bar\mu}\left[\tilde u.\kappa\circ\bar T^k\otimes\kappa\circ\bar T^m\otimes\kappa\circ\bar T^r\otimes\kappa\circ\bar T^s\right].
\end{equation}
But, due to \eqref{Esp5termes}, for $0\le k\le m\le r\le s\le n-1$,
we have
\begin{eqnarray*}
&\ &\mathbb E_{\bar\mu} [ \tilde u.\kappa\circ \bar T^k\otimes\kappa\circ \bar T^m\otimes
\kappa
\circ\bar T^r\otimes\kappa\circ \bar T^s]\\
&\ &=\mathbb E_{\bar\mu}[\tilde u.\kappa \circ \bar T^k]\otimes\mathbb E_{\bar\mu}[ \kappa\otimes\kappa\circ\bar T^{r-m}\otimes \kappa\circ \bar T^{s-m}] \\
&\ &+\mathbb E_{\bar\mu}[\tilde u.\kappa \circ \bar T^k\otimes \kappa\circ\bar T^m]\otimes\mathbb E_{\bar\mu}[ \kappa\otimes \kappa\circ \bar T^{s-r}]+O(\Vert u\Vert_{(\xi)}\Vert v\Vert_{(\xi)}\vartheta_0^{s/3})\, .
\end{eqnarray*}
Therefore
\begin{eqnarray*}
&\ &\mathbb E_{\bar \mu}[\tilde u.S_n^{\otimes 4}]=4\sum_{k\ge 0}\mathbb E_{\bar\mu}[S_{n-k}^{\otimes 3}]\\
&\ &+6\sum_{k,m\ge 0}\sum_{r\in\mathbb Z}\max(0,(n-\max(k,m)-|r|))
\mathbb E_{\bar\mu}[\tilde u.\kappa \circ \bar T^k\otimes \kappa\circ\bar T^m]\otimes\mathbb E_{\bar\mu}[ \kappa\otimes \kappa\circ \bar T^{r}]\\
&=&6nB_2^+(\tilde u)\otimes\Sigma^2-6
\sum_{k,m\ge 0}\sum_{r\in\mathbb Z}(\max(k,m)+|r|)
\mathbb E_{\bar\mu}[\tilde u.\kappa \circ \bar T^k\otimes \kappa\circ\bar T^m]\otimes\mathbb E_{\bar\mu}[ \kappa\otimes \kappa\circ \bar T^{r}]
\end{eqnarray*}
since $\mathbb E_{\bar\mu}[S_n^{\otimes 3}]=0$.
It comes
\begin{equation}
\mathbb E_{\bar \mu}[\tilde u.S_n^{\otimes 4}]
=6nB_2^+(\tilde u)\otimes\Sigma^2-6 
B_{0,2}^+(\tilde u)\otimes \Sigma^2-6 B_2^+(\tilde u)\otimes B_0+O(\vartheta_0^{n/2})\label{A4u1}
\end{equation}
\item Analogously, 
\begin{equation}
\mathbb E_{\bar \mu}[\tilde v\circ\bar T^n.S_n^{\otimes 4}]
=6nB_2^-(\tilde v)\otimes\Sigma^2-6 
B_{0,2}^-(\tilde v)\otimes \Sigma^2-6 B_2^-(\tilde v)\otimes B_0+O(\vartheta_0^{n/2})\, .\label{A41v}
\end{equation}
\end{itemize}
Formula \eqref{formuleA4} follows from \eqref{A4bilin}, \eqref{A4tilde}, \eqref{A4u1} and \eqref{A41v}.
\end{itemize}
\end{proof}
\begin{proposition}\label{lambda04}
The fourth derivatives of $\lambda$ at $0$ are given by
\[
\lambda_0^{(4)}=\lim_{n\rightarrow +\infty}\frac{\mathbb E_{\bar\mu}[S_n^{\otimes 4}]-3n^2(\Sigma^2)^{\otimes 2}}{n}+3(\Sigma^2)^{\otimes 2}+6\Sigma^2\otimes B_0\, .
\]
\end{proposition}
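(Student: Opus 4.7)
The plan is to compute $\lambda_0^{(4)}$ by matching the Taylor expansion at order 4 of the characteristic function $\phi_n(t):=\mathbb E_{\bar\mu}[e^{it*S_n}]$ on one side, against its spectral decomposition $\phi_n(t)=\lambda_t^n\,\eta(t)+O(\vartheta^n)$ on the other, where I set $\eta(t):=\mathbb E_{\hat\mu}[\Pi_t\mathbf 1]$. Here the error $O(\vartheta^n)$ and all its $t$-derivatives up to order $4$ are uniformly exponentially small by Proposition~\ref{pro:pertu}(ii), so they are negligible in the limit we want.

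First I would observe that $\lambda$ is even (Proposition~\ref{pro:pertu2}) and that $\phi_n$ is even (since $S_n$ and $-S_n$ have the same law, as used in the proof of Proposition~\ref{pro:pertu2}), so $\eta$ is also even, and every odd derivative at~$0$ of $\lambda$, $\eta$ and $\phi_n$ vanishes. Using Leibniz with the only surviving terms I get
\[
(\lambda^n\eta)^{(4)}_0=(\lambda^n)^{(4)}_0+6\,(\lambda^n)^{(2)}_0\otimes \eta^{(2)}_0+\eta^{(4)}_0.
\]
The scalar Taylor identities $(\lambda^n)^{(2)}_0=n\lambda_0^{(2)}=-n\Sigma^2$ and $(\lambda^n)^{(4)}_0=n\lambda_0^{(4)}+3n(n-1)(\lambda_0^{(2)})^{\otimes 2}=n\lambda_0^{(4)}+3n(n-1)(\Sigma^2)^{\otimes 2}$ follow from $\log\lambda_t=-\tfrac12\Sigma^2*t^{\otimes 2}+O(|t|^4)$ together with Proposition~\ref{pro:pertu}(iii). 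Matching with $\phi_n^{(4)}(0)=\mathbb E_{\bar\mu}[S_n^{\otimes 4}]$ then yields
\[
\mathbb E_{\bar\mu}[S_n^{\otimes 4}]=n\lambda_0^{(4)}+3n^2(\Sigma^2)^{\otimes 2}-3n(\Sigma^2)^{\otimes 2}-6n\Sigma^2\otimes\eta^{(2)}_0+\eta^{(4)}_0+O(\vartheta^n),
\]
so dividing by $n$ and letting $n\to\infty$ gives $\lim_n n^{-1}(\mathbb E_{\bar\mu}[S_n^{\otimes 4}]-3n^2(\Sigma^2)^{\otimes 2})=\lambda_0^{(4)}-3(\Sigma^2)^{\otimes 2}-6\Sigma^2\otimes\eta^{(2)}_0$.

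It remains to identify $\eta^{(2)}_0=B_0$. For this I would apply Proposition~\ref{Amn!!} to $u=v=\mathbf 1$: since $\mathbb E_{\bar\mu}[\kappa]=0$ the quantities $B_1^\pm(\mathbf 1)$ vanish, and since $\widetilde{\mathbf 1}=0$ we also get $B_2^\pm(\mathbf 1)=0$, so the formula for $A_2$ collapses to $A_2(\mathbf 1,\mathbf 1)=B_0$. On the other hand, from the definition $A_{2,n}(\mathbf 1,\mathbf 1)=(\phi_n(t)/\lambda_t^n)^{(2)}_{|t=0}$ and the decomposition $\phi_n(t)/\lambda_t^n=\eta(t)+\lambda_t^{-n}\mathbb E_{\hat\mu}[N_t^n\mathbf 1]$, the remainder and its derivatives being $O(\vartheta^n)$ on a neighbourhood of $0$ (where $\lambda_t$ is bounded away from $0$), one sees $A_{2,n}(\mathbf 1,\mathbf 1)=\eta^{(2)}_0+O(\vartheta^n)$, hence $\eta^{(2)}_0=B_0$, and the stated formula follows.

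The main (minor) obstacle is purely bookkeeping: one must be careful that the scalar binomial-type identity for $(\lambda^n)^{(4)}_0$ transports correctly into the symmetric-tensor formalism used in the paper (so that $(\lambda_0^{(2)})^{\otimes 2}=(-\Sigma^2)^{\otimes 2}=(\Sigma^2)^{\otimes 2}$ and the coefficient $6$ in the cross term is the right binomial coefficient $\binom{4}{2}$), and that the cross term $6\,(\lambda^n)^{(2)}_0\otimes\eta^{(2)}_0$ contributes exactly the $-6n\,\Sigma^2\otimes B_0$ which appears in the final expression.
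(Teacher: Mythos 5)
Your argument is correct and follows essentially the same route as the paper: the paper also differentiates $\mathbb E_{\bar\mu}[e^{it*S_n}]=\lambda_t^n\,(\mathbb E_{\bar\mu}[e^{it*S_n}]/\lambda_t^n)$ four times, uses evenness to drop the odd terms, expands $(\lambda^n)_0^{(4)}=n\lambda_0^{(4)}+3n(n-1)(\lambda_0^{(2)})^{\otimes 2}$ with $\lambda_0^{(2)}=-\Sigma^2$, and identifies the second-derivative factor with $B_0$ via $A_{2,n}(\mathbf 1,\mathbf 1)\to A_2(\mathbf 1,\mathbf 1)=B_0$ from Proposition \ref{Amn!!}. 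Your substitution of $\eta(t)=\mathbb E_{\hat\mu}[\Pi_t\mathbf 1]$ for the paper's $A_{m,n}(\mathbf 1,\mathbf 1)$ is only a cosmetic difference, controlled by the same $O(\vartheta^n)$ bounds on $N_t^n$.
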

\begin{proof}
Derivating four times $\mathbb E_{\bar\mu}[e^{it*S_n}]=\lambda_t^n \mathbb E_{\bar\mu}[e^{it*S_n}/\lambda_t^n]$ leads to
\begin{eqnarray*}
\mathbb E_{\bar\mu}[S_n^{\otimes 4}]&=&(\lambda^n)_0^{(4)}
+6(\lambda^n)_0^{(2)}\otimes A_{2,n}(\mathbf 1,\mathbf 1)+A_{4,n}(\mathbf 1,\mathbf 1)\\
&=& n\lambda_0^{(4)}+3n(n-1)(\lambda_0^{(2)})^{\otimes 2}+6n\lambda_0^{(2)}\otimes A_{2,n}(\mathbf 1,\mathbf 1)
   +A_{4,n}(\mathbf 1,\mathbf 1)\, ,
\end{eqnarray*}
and we conclude due to \eqref{Amlim} and due to $\lambda_0^{(2)}=-\Sigma^2$ (coming from Item (iii) of Proposition \ref{pro:pertu}).
\end{proof}

\noindent{\bf Acknowledgment.\/}{
The author wishes to thank Damien Thomine for interesting discussions 
having led to an improvement of the assumption for the mixing result in the infinite horizon billiard case.
}
\end{appendix}

%%%%%%%%%%%%%%%

\end{document}